\documentclass[12pt,a4paper,]{article}
\usepackage[latin1]{inputenc}
\usepackage[T1]{fontenc}
\usepackage[english]{babel}
\usepackage{amsmath,amssymb,amsfonts,amsthm}
\usepackage[pdftex]{graphicx}
\usepackage[pdftex]{color}
\usepackage{enumerate}
\usepackage{dsfont}
\usepackage[all]{xy}
\usepackage{fancybox}
\usepackage{makeidx}
\usepackage{multicol}
\usepackage{mathrsfs}
\usepackage{titlesec}
\usepackage{hyperref}

\newcommand{\rigtharrow}{\rightarrow}

\makeindex

\linespread{1.085}

\setlength{\parskip}{0.7ex plus 0ex minus 0ex}
\usepackage{anysize}
\marginsize{2.5cm}{2.4cm}{1.6cm}{2.3cm}%{izquierda}{derecha}{arriba}{abajo}

\newcommand{\bs}{ \backslash }

\newtheorem{teo}{Theorem}%[section]
\newtheorem{prop}[teo]{Proposition}%[section]
\newtheorem{lema}[teo]{Lemma}%[teo]
%[teo]
\newtheorem{cor}[teo]{Corollary}%[teo]
%[teo]
%[teo]
\theoremstyle{definition}
\newtheorem{deff}[teo]{Definition}%[section]
\newtheorem{exa}[teo]{Example}
\newtheorem{rmk}[teo]{Remark}

\newcommand{\ce}{ \mathcal{E}}

\newcommand{\cep}{{ \mathcal{E}'}}

\newcommand{\ceo}{{ \mathcal{E}_0 }}
\newcommand{\ceop}{ {\mathcal{E}_0' }}

\newcommand{\epsil}{\varepsilon}

\newcommand{\tx}{{\widetilde{X}}}
\newcommand{\txp}{{\widetilde{X}'}}

\newcommand{\ty}{{\widetilde{Y}}}

\newcommand{\hx}{{\widehat{X}}}
\newcommand{\hxp}{{\widehat{X}'}}
\newcommand{\hxpp}{{\widehat{X}''}}
\newcommand{\hy}{{\widehat{Y}}}

\newcommand{\tf}{{\widetilde{f}}}

\newcommand{\tgg}{{\widetilde{g}}}

\newcommand{\hf}{{\widehat{f}}}

\newcommand{\hg}{{\widehat{g}}}

\newcommand{\xid}{{{(X,\widehat {X},\widetilde{X})}}}
\newcommand{\xpid}{{{(X',\widehat {X}',\widetilde{X}')}}}

\newcommand{\xiv}{{{(X,\widehat {X},\widetilde{X})}}}
\newcommand{\xpiv}{{{(X',\widehat {X}',\widetilde{X}')}}}
\newcommand{\xppiv}{{{(X'',\widehat {X}'',\widetilde{X}'')}}}

\newcommand{\Nset}{\mathds{N}}

\newcommand{\Rset}{\mathds{R}}
\newcommand{\Cset}{\mathds{C}}

 \DeclareMathOperator{\intt}{int}

\DeclareMathOperator{\adh}{adh}
\renewcommand{\int}{\intt}
\newcommand{\dd}{{\ddabc}}
\DeclareMathOperator{\ddabc}{d}

\DeclareMathOperator{\BB}{B} \DeclareMathOperator{\BBC}{{\overline
B}}
 \newcommand{\ddd}{{\dddabc}}
 \DeclareMathOperator{\dddabc}{D}

\DeclareMathOperator{\kh}{{h}}

\newcommand{\calt}{{\mathcal T}}

\title{A $C_0$ coarse structure for families of pseudometrics and the Higson-Roe functor}
\author{Jesús P. Moreno-Damas}
\date{}

\begin{document}

\maketitle

\begin{abstract}
This paper deepens into the relations between coarse spaces and
compactifications, by defining a $C_0$ coarse structure attached to
a family of pseudometrics. This definition allow us to give a more
topological point of view on the relations between coarse structures
and compactifications ---like the Higson-Roe compactification,
corona and functor and the topological coarse structure attached to
a compactification---, define new functors and giving new relations
between them, in particular, some equivalences of categories.

\end{abstract}

\section{Introduction}

Starting from \cite{highrc}, in \cite{cg2,cg} Roe develops the
relations between compactifications and coarse spaces, by defining
the topological coarse structure attached to a compactification and
the Higson-Roe compactification attached to a proper coarse
structure. He also define, by an algebraic method, the Higson-Roe
functor, from the coarse spaces to the coronas of their attached
compactifications, called the Higson-Roe coronas. This kind of
relations can be represented in the following diagram:

\begin{equation}\label{jajja1}\xymatrix{\textrm{Coarse structures}\ar^{*}[r]\ar^\nu[rd]&\textrm{Compactifications}\ar[l],\ar^{\partial}[d]\ar[l]\\
&\textrm{Compact spaces}}\end{equation} where $\partial$ is the
corona of a compactification and $\nu$ is the Higson-Roe functor.

In \cite{zs}, the authors, with a topological point of view, focus
in the case in which the spaces are complements of Z-sets of the
Hilbert cube, where the compactification is the Hilbert cube, and
work with continuous maps. They prove that the topological coarse
structure attached to that compactification is the $C_0$ coarse
structure
---defined by Wright in \cite{wright,wright3}--- attached to any
metric of the Hilbert cube. Among other results, they give a
topological point of view of that facts and define an equivalence of
categories. More works have gone in that direction, for example
\cite{niya} and \cite{chig}.

Working in \cite{moreno,moreno2} ---my PhD Thesis, directed by
Morón, one of the authors of \cite{zs}---  we observed that some of
the results of \cite{zs}, can be extended to all compactifications
if we generalize the concept of $C_0$ coarse structure to a family
of pseudometrics. Developing this coarse structure, we have tools to
study the Higson Roe compactification, corona and functor and the
topological coarse structure with a more topological point of view.
We complete the diagram (\ref{jajja1}), by becoming the Higson Roe's
compactification and the topological coarse structure ---represented
by $*$ in the diagram--- into functors, keeping their particular
properties. To do it, we need to define a new category of morphisms,
the asymptotically continuous maps, a kind of maps with involve the
coarse maps between proper coarse spaces and the proper and
continuous maps. Moreover, we give an alternative topological
definition of the Higson-Roe functor, like a ``limit'' functor,
enabling to define it in other several cases.

Some properties of the properties of the Higson-Roe compactification
and the topological coarse structure are keeped, like to be
pseudoinverses. Furthermore, we describe some equivalences of
categories and other kind of funtorial relations.

In Section \ref{section2} we introduce the basic needed definitions
and notation in compactifications and algebras of functions, coarse
geometry and Z-sets in the Hilbert and the finite dimensional cube.

On Section \ref{section3}, mainly technical, we introduce the
`limit' and `total' operator, and characterize them in terms of
pseudometrics and algebras of functions, in order to obtain the main
results of the following section.

Section \ref{section4}, which is the core of the paper, contains the
results of this work:
\begin{itemize}
\item In Section \ref{section41} we give the definition of the generalized $C_0$
coarse structure attached to a family of pseudometrics. We prove
that it is equal to the topological coarse structure attached to a
compactification when we consider a family of pseudometrics which
define the topology of that compactification. Moreover, we define
the functor attached to the topological coarse structure (to do it,
we need to define the ``asymptotically continuous maps''). Also, we
characterize some coarse properties, like coarseness.

\item In section \ref{section42} we define the functor related with the Higson-Roe
compactification and study the needed coarse conditions to define
extensions of maps.

\item In section \ref{section43} we give an alternative topological definition of
the Higson-Roe functor as a limit functor, enabling to define it in
other several cases.

\item Finally, in Section \ref{section44} we put together all the functorial
information of the preceding two subsections and give some
equivalences of categories.\end{itemize}

\section{Preliminaries: Basic definitions and notations}\label{section2}

If $X$ is a set, $Y\subset X$ and $A$ is a family of functions over
$X$, pseudometrics over $X$ etc., we denote by $A|_Y$ the family
$\{f|_Y:f\in A\}$.

If $f:Z\rightarrow Z'$ is a (not necessarily continuous) map between
locally compact spaces, $f$ is proper if and only if for every
relatively compact subset $K\subset\hxp$, $f^{-1}(K)$ is relatively
compact (equivalently if for every met $\{x_\lambda\}\subset\hx$
with $x_\lambda\rightarrow\infty$, we have that
$f(x_\lambda)\rightarrow\infty$, see Proposition \ref{equiinfinv}).

If $(Z,\dd)$ is a metric space, $\dd$ is totally bounded if the
Cauchy completion is compact.

Let us give a brief summary of compactifications and algebras of
functions, theory of pseudometrics, coarse geometry and Z-sets in
the Hilbert cube and the finite dimensional cubes.

\textbf{Compactifications and algebras of functions.}

Let $\hx$ be a locally compact, but not compact, Hausdorff Space.
For us, a compactification of $\hx$ is a compact Hausdorff space $K$
containing $\hx$ as a dense subset, in which case, $\hx$ is open in
$K$. The corona of $K$ is $K\bs \hx$.

From \cite{jpmd} we take the following notation: we say that
\textit{compactification pack} is a vector $\xid$ such that $\tx$ is
a compact Hausdorff space, $X$ is a nowheredense closed subset of
$\tx$ and $\hx=\tx\bs X$. Observe that $\tx$ is a compactification
of $\hx$ and $X$ is its corona.

Given two compactifications $K$ and $K'$ of $\hx$, we say that
$K\leq K'$ if there exists a quotient $q:K'\rightarrow K$ such that
$q|_\hx=Id_\hx$ (equivalently, if $Id:\hx\rightarrow \hx$ extends to
a continuous map $q:K'\rightarrow K$). $K$ and $K'$ are equivalent
if $K\leq K'$ and $K'\leq K$, i.e., there exists a homeomorphism
$h:K\rightarrow K'$ such that $h|_\hx=Id_\hx$.

If $Z$ is a locally compact (maybe compact) Hausdorff space, we
denote by $C(Z)$ the collection of all the real continuous functions
$f:X\rightarrow \Rset$, by $C_0(Z)$ the collection of all the real
continuous functions which vanish at infinity and by $C_b(Z)$ the
collection of all the real continuous and bounded functions.

Given $A\subset C_b(\hx)$ containing $C_0(\hx)$, there is a natural
embedding $i_A:Z\hookrightarrow \Rset^{A}$, $x\rightarrow
(f(x))_{f\in A}$. In particular, $\overline{i_A(Z)}$ is a
compactification of $Z$.

In fact, there is a bijection between  the compactifications of
$\hx$ and the closed subalgebras of $C_b(\hx)$ containing $C_0(X)$,
given by:
\begin{itemize}
\item If $K$ is a compactification,
$C(K)|_\hx$ is an algebra satisfying that properties.
\item If $A$ an algebra as stated, $\overline{i_A(\hx)}$ is a
compactification.\end{itemize}

That bijection preserves the order, that is if $K\leq K'$, then
$C(K)|_\hx\subset C(K')|_\hx$. The smallest compactification is the
Alexandrov one, $\hx\cup\{\infty\}$, denoted here by $A$, attached
to $C_0(\hx)+\langle 1\rangle$, where $1$ is the constantly $1$
function. The biggest compactification of $\hx$ is the
Stone-\v{C}ech compactification, denoted usually by $\beta\hx$,
attached to $C_b(\hx)$. $\beta\hx$'s corona is often denoted by
$\hx^*$.

\textbf{Pseudometrics.}

A pseudometric over $X$ is a map $\dd:X\times X\rightarrow
[0,\infty)$ such that:
\begin{itemize}
\item $\dd(x,x)=0\qquad\forall x\in X$
\item $\dd(x,y)=\dd(y,x)\qquad\forall x,y\in X$
\item $\dd(x,z)\leq \dd(x,y)+\dd(y,z)\qquad x,y,z\in
X$\end{itemize}

If $X$ is a topological space, we say that $\dd$ is a pseudometric
of $X$ if it is continuous.

A pseudometric $\dd$ is a metric when $\dd(x,y)=0$ if and only if
$x=y$. If $x\in X$ y $r\geq0$, the $\dd$-ball, denoted by
$\BB_{\dd}(x,r)$ is the set $\{y\in X:\dd(x,y)<r\}$. The closed
$\dd$-ball, denoted by $\BBC_\dd(x,r)$ is the set $\{y\in
X:\dd(x,y)\leq r\}$

A family of pseudometrics $\ddd$ in a set generes a topology on
$\hx$, denoted by us by $\calt_\ddd$, given by the basis:
$$\{\BB_{\dd_1}(x,\epsil)\cap\cdots\cap
\BB_{\dd_n}(x,\epsil):x\in X,\epsil>0,n\in \Nset, d_1,\cdots,d_n\in
\ddd\}$$

In this topology, a net $\{x_\lambda\}\subset X$ converges to a
point $x$ if and only if $\dd(x_\lambda,x)\rightarrow 0$ for every
$\dd\in\ddd$. Moreover, $X$ is Hausdorff if and only if $\ddd$
separates points, i. e. for every $x,y\in X$ there exist
$\dd\in\ddd$ such that $\dd(x,y)>0$.

If $f:X\rightarrow \Rset$ is a continuous function, it induces a
pseudometric in $X$, denoted here by $\dd_f$, given by $\dd_f(x,y)=
|f(x)-f(y)|$.

If $X$ is Hausdorff, its topology is generated by a family of
pseudometrics if and only if $X$ is completely regular (see Theorem
10.6 of \cite{dg}, pág. 200). In this case, $\{\dd_f:f\in C(X)\}$ is
a family of pseudometrics generating its topology. In particular,
the topology of a locally compact Hausdorff space is generated by a
family of pseudometrics.

If $X$ is a compact Hausdorff space a family of pseudometrics $\ddd$
fo $X$ generes its topology if and only if it separates points
(indeed, $Id:(X,\calt)\rightarrow (X,\calt_\ddd)$ is a continuous
bijection between a compact and a Hausdorff space, hence is a
homeomorfism and $\calt=\calt_\ddd$).

\subsection{Coarse geometry}

Let us give some definitions of coarse geometry. For more
information, see \cite{cg}. Let $E,F\subset Z\times Z$, let $x\in Z$
and let $K\subset Z$. The product of $E$ and $F$, denoted by $E\circ
F$, is the set $\{(x,z):\exists y\in Z\textrm{ such that }(x,y)\in
E,(y,z)\in F\}$, the inverse of $E$, denoted by $E^{-1}$, is the set
$E^{-1}=\{(y,x):(x,y)\in E\}$, the diagonal, denoted by $\Delta$, is
the set $\{(z,z):z\in Z\}$. If $x\in Z$, the $E$-ball of $x$,
denoted by $E_x$ is the set $E_x=\{y:(y,x)\in E\}$ and, if $K\subset
Z$, $E(K)$ is the set $\{y:\exists x\in K\textrm{ such that }
(y,x)\in E\}$. If $\alpha$ is a family of subsets of $Z$,
$E(\alpha)$ is the family of subsets of $Z$ $\{E(U):U\in\alpha\}$.
We say that $E$ is symmetric if $E=E^{-1}$.

A coarse structure $\ce$ over a set $Z$ is a family of subsets of
$Z\times Z$ which contains the diagonal and is closed under the
formation of products, finite unions, inverses and subsets. The
elements of $\ce$ are called controlled sets. $B\subset Z$ is said
to be bounded if there exists $x\in Z$ and $E\in\ce$ with $B=E_x$
(equivalently, $B$ is bounded if $B\times B\in\ce$).

A map $f:(Z,\ce)\rightarrow (Z',\cep)$ between coarse spaces is
called bornologous if $f\times f(E)$ is controlled for every
controlled set $E$ of $Z$ and (coarsely) proper if $f^{-1}(B)$ is
bounded for every bounded subset $B$ of $Z'$. If $f$ is proper and
bornologous, it is said to be coarse.

 We say that $f$ is a coarse
equivalence if $f$ is coarse and there exists a coarse map
$g:(Z',\cep)\rightarrow (Z,\ce)$ such that $\{(g\circ f(x),x):x\in
Z\}\in\ce$ and $\{(f\circ g(y),y):y\in Z'\}\in \cep$. In this case,
$g$ is called a coarse inverse of $f$.

If $Z$ is a topological space and $E\subset Z\times Z$, we say that
$E$ is proper if $E(K)$ and $E^{-1}(K)$ are relatively compact for
every relatively compact subset $K\subset Z$. If $\ce$ is a coarse
structure over $Z$, we say that $(Z,\ce)$ is a proper coarse space
if $Z$ is Hausdorff, locally compact and paracompact, $\ce$ contains
a neighborhood of the diagonal in $Z\times Z$ and the bounded
subsets of $(Z,\ce)$ are precisely the relatively compact subsets of
$Z$.

Let us give the following definition: If $Z$ is a topological space
and $\ce$ is a coarse structure over $Z$, we say that that $(Z,\ce)$
is preproper if all its controlled subsets are proper and, for every
$K\subset Z$ and all the relatively compact subsets of $Z$ are
bounded in $\ce$.

Clearly, a proper coarse space is preproper. Observe that if
$(Z,\ce)$ is preproper, then any $B\subset Z$ is bounded if and only
if is locally compact (Indeed, if $B$ si bounded and not empty,
taking $x_0\in B$, we have that $B=B\times B(x_0)$ is relatively
compact because $\{x_0\}$ relatively compact and  $B\times B$ is
controlled and, consequently, proper).

Observe that if $f:(Z,\ce)\rightarrow(Z',\ce)$ are preproper coarse
spaces, then $f$ is coarsely proper if and only if $f$ is
topologically proper.

\subsection{Z-sets in the Hilbert cube or in a finite dimensional
cube}\label{introduccionzsets}

If $(\tx,\dd)$ is a compact metric space, $X\subset\tx$ is a Z-set
if for every $\epsil>0$ there exists a continuous map
$f:\tx\rightarrow\tx$ such that $\dd'(f,Id)<\epsil$
---where $\dd'$ is the supremum metric--- and $f(\tx)\cap
X=\varnothing$ (the definition of Z-set given in \cite{hc}, chapter
I-3, page 2, is trivially equivalent in this context).

By $Q$ we denote the Hilbert cube $[0,1]^\Nset$. $X$ is a Z-set of
$Q$ if and only if there exist an homeomorphism $h:[0,1]\times
Q\rightarrow Q$ such that $h(X)\subset\{0\}\times Q$ (see Remark 26
of \cite{jpmd}, pag. 106, for a proof). $X$ is a Z-set of the finite
dimensional cube $[0,1]^n$ (for $n\geq 1$) if and only if
$X\subset[-1,1]^n\bs (-1,1)^n$ (it follows from Example VI 2 of
\cite{hw}).

If $\tx$ is the Hilbert cube or a finite dimensional cube, $X\subset
\tx$ is a Z-set if and only if there exists an homotopy $H:\tx\times
[0,1]\rightarrow \tx$ such that $H_0=Id_Q$ and $H_t(\tx)\subset
\tx\bs X$ for every $t>1$. (Sufficiently is obvious, necessity
follows from characterizations above).

Every compact metric space has an embedding in $Q$ as a Z-set (it
has an embedding in $Q$, and hence en in $\{0\}\times Q\subset
[0,1]\times Q\approx Q$) and every compact subset of finite
dimension has an embedding as a Z-set in a finite dimensional cube
(if $\dim X=n$, it can be embebed in $[0,1]^{2n+1}$ (see \cite{hw},
Theorem V.2, pag. 56), so it can be embebed in a face of
$[0,1]^{2n+2})$.

\section{Previous technical results}\label{section3}

To describe the (extended) $C_0$ coarse structure and the functors
involved, we need to develop some tools which relate
compactification packs with pseudometrics and algebras of functions.

The reason is that the classical theory of ring of functions (see
\cite{rcf}) is not enough to our purpose, because we work with not
necessarily continuous functions (but with topological properties,
like properness).

\subsection{Limits at infinity, properness}

\begin{lema}[technical]\label{epropiolimitesprevio} Let $\hx$ and
$\hxp$ be locally compact spaces and $E\subset\hx\times\hxp$. For
every $K\subset\hxp$, let us denote by $E(K)$ the set $\{x\in
\hx:\exists y\in K\textrm{ such that }(x,y)\in E\}$. Then,
\begin{enumerate}[a) ]
\item $E(K)$ is relatively compact for every relatively compact set $K\subset\hxp$.
\item For every net $(x_\lambda,y_\lambda)\subset E$, if
$x_\lambda\rightarrow\infty$, then $y_\lambda\rightarrow\infty$.
\end{enumerate}
are equivalent.
\end{lema}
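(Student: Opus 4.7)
The approach is to prove both directions by contrapositive, using the standard characterization of going to infinity in a locally compact Hausdorff space: a net $\{z_\lambda\}$ fails to go to infinity if and only if some subnet stays inside a compact (equivalently, relatively compact) set.

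For (a) $\Rightarrow$ (b), I would start with a net $(x_\lambda,y_\lambda)\subset E$ such that $x_\lambda\to\infty$, and assume for contradiction that $y_\lambda\not\to\infty$. Then there is a relatively compact $K\subset\hxp$ and a subnet $\{y_{\lambda_\mu}\}$ contained in $K$. By definition, the corresponding $x_{\lambda_\mu}$ all lie in $E(K)$, which is relatively compact by (a). Hence $\{x_{\lambda_\mu}\}$ has a further subnet converging in $\hx$, contradicting the fact that every subnet of $\{x_\lambda\}$ goes to infinity.

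For (b) $\Rightarrow$ (a), take a relatively compact $K\subset\hxp$ and suppose $E(K)$ is not relatively compact. Then its closure is noncompact, so there exists a net $\{x_\lambda\}\subset E(K)$ with $x_\lambda\to\infty$ in $\hx$. Using the axiom of choice, for each $\lambda$ pick $y_\lambda\in K$ with $(x_\lambda,y_\lambda)\in E$. By (b) we get $y_\lambda\to\infty$; but $\{y_\lambda\}\subset K$ is contained in a relatively compact set, so it has a convergent subnet in $\hxp$, a contradiction.

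Neither direction presents a real obstacle; the only point that deserves a little care is the handling of nets rather than sequences (since $\hx$ and $\hxp$ need not be first countable), so one should speak of subnets and the existence of cluster points in the closure of a relatively compact set, rather than passing to convergent subsequences. Once this is phrased correctly, both implications reduce to one-line contradictions.
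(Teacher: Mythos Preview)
Your proposal is correct and follows essentially the same route as the paper. The only minor differences are stylistic: for (a) $\Rightarrow$ (b) the paper argues directly (given relatively compact $K\subset\hxp$, relative compactness of $E(K)$ yields $\lambda_0$ with $x_\lambda\notin E(K)$, hence $y_\lambda\notin K$, for $\lambda\geq\lambda_0$) rather than by contradiction via subnets, and for (b) $\Rightarrow$ (a) it makes your assertion ``there exists a net in $E(K)$ going to infinity'' explicit by indexing over the directed set of relatively compact subsets of $\hx$.
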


\begin{proof}

Suppose a). Pick $\{(x_\lambda,y_\lambda)\}\subset E$ with
$x_\lambda\rightarrow\infty$. Take $K\subset \hxp$ relatively
compact. Then $E(K)$ is relatively compact, so there exists
$\lambda_0$ such that for every $\lambda\geq\lambda_0$, $x_\lambda
\not\in E(K)$. Then, $y_\lambda\not\in K$ for all $\lambda\geq
\lambda_0$. Therefore, $y_\lambda\rightarrow\infty$.

Suppose b). Pick $K\subset \hxp$ relatively compact. Suppose $E(K)$
is not relatively compact. Then, for all $B\subset \hx$ relatively
compact, there exists $x_B\in E(K)\bs B$. For all $B$, let $y_B\in
K$ such that $(x_B,y_B)\in E$. Let $D$ be the directed set
consisting of all the relatively compact sets of $\hx$ with the
order defined by $B\leq B'$ if and only if $B\subset B'$. Then
$\{x_B\}_{B\in D}$ is a net such that $x_B\rightarrow\infty$, since
$x_B\not\in B'$ for every $B'\geq B$. Hence, $y_B\rightarrow\infty$
which contradicts the fact that $\{y_B\}\subset K$, which is
relatively compact. It follows that $E(K)$ is relatively compact.
\end{proof}

\begin{prop} \label{epropiolimites} If $\hx$ is locally compact and $E\subset \hx\times \hx$, then
\begin{enumerate}[a) ]
\item $E$ is proper.
\item For every net $\{(x_\lambda,y_\lambda)\}\subset E$, we have that
$x_\lambda\rightarrow\infty$ if and only if
$y_\lambda\rightarrow\infty$.\end{enumerate} are equivalent.

\end{prop}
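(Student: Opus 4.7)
The plan is to reduce Proposition \ref{epropiolimites} to two applications of the preceding technical Lemma \ref{epropiolimitesprevio}, since the conclusion is essentially the symmetric version of that one-sided statement. Unravelling the definition, ``$E$ is proper'' means that both $E(K)$ and $E^{-1}(K)$ are relatively compact for every relatively compact $K \subset \hx$, while the biconditional in b) is the conjunction of the two implications $x_\lambda \to \infty \Rightarrow y_\lambda \to \infty$ and $y_\lambda \to \infty \Rightarrow x_\lambda \to \infty$. So it suffices to match each of the two properness conditions to one of the two implications.

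First, I would apply Lemma \ref{epropiolimitesprevio} directly, with $\hxp = \hx$, to the set $E \subset \hx \times \hx$. This yields that $E(K)$ is relatively compact for every relatively compact $K \subset \hx$ if and only if, for every net $\{(x_\lambda, y_\lambda)\} \subset E$, $x_\lambda \to \infty$ implies $y_\lambda \to \infty$.

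Second, I would apply the same lemma to $E^{-1} \subset \hx \times \hx$. A net $\{(y_\lambda, x_\lambda)\} \subset E^{-1}$ is exactly the same data as a net $\{(x_\lambda, y_\lambda)\} \subset E$ with the coordinates swapped, and by the definition used in the lemma $(E^{-1})(K) = \{y : \exists\, x \in K,\, (y,x) \in E^{-1}\} = E^{-1}(K)$. Therefore the lemma gives that $E^{-1}(K)$ is relatively compact for every relatively compact $K \subset \hx$ if and only if $y_\lambda \to \infty$ implies $x_\lambda \to \infty$ for every net $\{(x_\lambda,y_\lambda)\} \subset E$.

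Conjoining the two equivalences yields exactly the claimed equivalence between a) and b). There is no real obstacle here: the only point requiring a moment of care is the bookkeeping when passing to $E^{-1}$, namely checking that the ``$E(K)$''-notation of the lemma applied to $E^{-1}$ produces what the definition of properness calls $E^{-1}(K)$, so that no roles get crossed.
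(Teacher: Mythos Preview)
Your proof is correct and follows exactly the same approach as the paper, which simply says ``Apply Lemma \ref{epropiolimitesprevio} to $E$ and $E^{-1}$.'' You have merely spelled out the bookkeeping in more detail.
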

\begin{proof}
Apply Lemma \ref{epropiolimitesprevio} to $E$ and $E^{-1}$.\end{proof}

\begin{cor}\label{epropiolimites2}
If $\hx$ is locally compact and $E\subset \hx\times \hx$, then
\begin{enumerate}[a) ]
\item $E$ is proper.
\item  For every net $\{(x_\lambda,y_\lambda)\}\subset E\cup
E^{-1}$, if $x_\lambda\rightarrow\infty$, then
$y_\lambda\rightarrow\infty$.
\item For every net $\{(x_\lambda,y_\lambda)\}\subset E\cup
E^{-1}$, if $x_\lambda\not\rightarrow\infty$, then
$y_\lambda\not\rightarrow\infty$. \end{enumerate} are equivalent.
\end{cor}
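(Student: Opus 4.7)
The plan is to reduce everything to the previously established Lemma \ref{epropiolimitesprevio} and Proposition \ref{epropiolimites}, exploiting the fact that $E\cup E^{-1}$ is a symmetric subset of $\hx\times\hx$.

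First I would prove a) $\Leftrightarrow$ b). By definition, $E$ is proper means that both $E(K)$ and $E^{-1}(K)$ are relatively compact for every relatively compact $K\subset\hx$. Applying Lemma \ref{epropiolimitesprevio} separately to $E$ and to $E^{-1}$, this is equivalent to the conjunction of: for every net in $E$, $x_\lambda\to\infty$ implies $y_\lambda\to\infty$; and for every net in $E^{-1}$, $x_\lambda\to\infty$ implies $y_\lambda\to\infty$. Since any net in $E\cup E^{-1}$ has a subnet lying either entirely in $E$ or entirely in $E^{-1}$ (and convergence to $\infty$ is characterized by subnets), these two conditions together are equivalent to statement b).

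Next I would prove b) $\Leftrightarrow$ c) using symmetry. Note that $E\cup E^{-1}$ equals its own inverse: if $(x_\lambda,y_\lambda)\in E\cup E^{-1}$, then $(y_\lambda,x_\lambda)\in E\cup E^{-1}$ as well. Therefore b), which is the implication ``$x_\lambda\to\infty\Rightarrow y_\lambda\to\infty$'' for nets in $E\cup E^{-1}$, is logically equivalent to the swapped implication ``$y_\lambda\to\infty\Rightarrow x_\lambda\to\infty$''. Taking the contrapositive of this swapped form yields exactly c). Reversing the argument (contrapositive, then swap coordinates using symmetry) recovers b) from c).

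The whole argument is essentially a bookkeeping exercise on the symmetry of $E\cup E^{-1}$, so there is no real obstacle; the only mildly delicate point is justifying the ``subnet in $E$ or in $E^{-1}$'' step in the first equivalence, but this is immediate since $E\cup E^{-1}=E\cup E^{-1}$ is just a union of two sets and any net in a finite union has a subnet in one of the pieces.
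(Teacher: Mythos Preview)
Your argument is correct and follows exactly the route the paper intends: the corollary is stated in the paper without proof, as an immediate consequence of Lemma~\ref{epropiolimitesprevio} and Proposition~\ref{epropiolimites}, and your write-up simply spells out that deduction (the symmetry of $E\cup E^{-1}$ and the cofinal-subnet observation). The only point worth tightening is the phrase ``convergence to $\infty$ is characterized by subnets'': what you actually use is that if $y_\lambda\not\to\infty$ then some subnet stays in a compact set, and that subnets of nets converging to $\infty$ still converge to $\infty$ --- both routine, but it would read more smoothly if stated that way.
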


\begin{rmk}\label{obsepropioinfty} If $\hx$ is locally compact and
$\{(x_\lambda,y_\lambda)\}\subset\hx\times\hx$, then
$(x_\lambda,y_\lambda)\rightarrow\infty$ if and only if
$x_\lambda\rightarrow\infty$ or $y_\lambda\rightarrow\infty$. If,
moreover, $E\subset\hx\times\hx$ is proper and
$\{(x_\lambda,y_\lambda)\}\subset E$, then
$(x_\lambda,y_\lambda)\rightarrow\infty$ if and only if
$x_\lambda\rightarrow\infty$ and
$y_\lambda\rightarrow\infty$.\end{rmk}

\begin{lema} \label{equiinfinv}
Let $f:\hx\rightarrow \hxp$ be a map between locally compact spaces. Then,
\begin{enumerate}[a) ]
\item $f^{-1}(K')$ is relatively compact for every relatively compact subset $K'$ of $\hxp$.
\item For every net $\{x_\lambda\}\subset\hx$, if $x_\lambda\rightarrow\infty$, then $f(x_\lambda)\rightarrow\infty$.
\end{enumerate}
are equivalent. Moreover
\begin{enumerate}[a') ]
\item $f(K)$ is relatively compact for every relatively compact subset $K$ of $\hx$.
\item For every net $\{x_\lambda\}\subset \hx$, if $f(x_\lambda)\rightarrow\infty$, then
$x_\lambda\rightarrow\infty$.\end{enumerate} are equivalent. And
\begin{enumerate}[a'') ]
\item For every $K\subset\hx$, $f(K)$ is relatively compact if and only if $K$ is relatively compact.
\item For every net $\{x_\lambda\}\subset \hx$, $x_\lambda\rightarrow\infty$
if and only if $f(x_\lambda)\rightarrow\infty$.\end{enumerate} are
equivalent.
\end{lema}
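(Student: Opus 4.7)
The plan is to reduce each of the three equivalences to the already-established Lemma \ref{epropiolimitesprevio} by applying it to suitable subsets of a product space. The key observation is that the graph of $f$ and its transpose encode the two ``halves'' of properness, and the third equivalence is then a formal consequence of the first two.

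First I would prove $a\Leftrightarrow b$ by applying Lemma \ref{epropiolimitesprevio} to the graph $E=\{(x,f(x)):x\in\hx\}\subset\hx\times\hxp$. With this choice one checks directly that $E(K')=f^{-1}(K')$ for every $K'\subset\hxp$, and that every net in $E$ has the form $(x_\lambda,f(x_\lambda))$, so conditions (a) and (b) of the technical lemma translate literally into our (a) and (b).

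For $a'\Leftrightarrow b'$ I would apply the same technical lemma to the ``transposed graph'' $E'=\{(f(x),x):x\in\hx\}\subset\hxp\times\hx$, exchanging the roles played by $\hx$ and $\hxp$ in Lemma \ref{epropiolimitesprevio}. Then $E'(K)=f(K)$ for every $K\subset\hx$ and nets in $E'$ are of the form $(f(x_\lambda),x_\lambda)$, so the lemma yields the stated equivalence. This is the only step where one has to be careful with the bookkeeping of which factor plays which role.

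Finally, $a''\Leftrightarrow b''$ follows by combining the two previous equivalences. The equivalence $b''\Leftrightarrow b\wedge b'$ is immediate from the definitions. For the topological side, the observation is that $a''\Leftrightarrow a\wedge a'$: the direction $a''\Rightarrow a$ uses that if $K'\subset\hxp$ is relatively compact, then $f(f^{-1}(K'))\subset K'$ is relatively compact, so the nontrivial direction of (a'') forces $f^{-1}(K')$ to be relatively compact; conversely, (a)+(a') imply (a'') since $K\subset f^{-1}(f(K))$. No step is a genuine obstacle, because all the real content sits in Lemma \ref{epropiolimitesprevio}; this lemma is essentially a dictionary translating that result into the language of maps.
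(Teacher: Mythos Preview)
Your proposal is correct and follows exactly the paper's own route: apply Lemma~\ref{epropiolimitesprevio} to the graph $\{(x,f(x)):x\in\hx\}$ for the first equivalence, to the transposed graph $\{(f(x),x):x\in\hx\}$ for the second, and deduce the third from the first two. Your write-up is in fact more detailed than the paper's, which simply states these applications and says the last equivalence is a consequence of the others.
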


\begin{proof}
To see the first and the second equivalences, apply Lemma
\ref{epropiolimitesprevio} to $\{(x,f(x)):x\in \hx\}$ and
$\{(f(x),x):x\in \hx\}$ respectively. The last equivalence is a
consequence of the other ones.
\end{proof}

\begin{deff} Let $f: \hx\rightarrow \hy$ be a map between locally compact spaces. $f$ is biproper if, for every
$K\subset \hx$, $K$ is relatively compact if and only $f(K)$ is
(equivalently if, for every net $\{x_\lambda\}\subset \hx$,
$x_\lambda\rightarrow\infty$ if and only if
$f(x_\lambda)\rightarrow\infty$).
\end{deff}

\begin{exa}\label{ejsbipropiaa}
If $f:\hx\rightarrow \hy$ is a continuous and proper map between
locally compact spaces, then $f$ is biproper. Indeed, if $f$ is
continuous and $K$ relatively compact, then $f(K)$ is relatively
compact.
\end{exa}

\begin{exa}\label{exabiproper}
If $f:\hx\rightarrow \hy$ is a coarse map between preproper spaces,
then $f$ is biproper. (Indeed, if $K$ is a relatively compact subset
of $\hx$, then $K\times K$ es controlled, so that $f(K)\times
f(K)=f\times f(K\times K)$ is controlled. Hence, $f(K)$ is bounded
and, consequently, relatively compact).
\end{exa}

\subsection{Limit and total operators $l$ and $t$, for maps}

\begin{prop}\label{extensioncontinua}
Let $\tx$ be a topological space, $\hx$ a dense subset of $\tx$ and
$A\subset \hx$. Suppose $Y$ is a regular space and $f:\tx\rightarrow
Y$ is a map such that
\begin{equation}\label{eq15a9573jkla}
\forall x\in A\,\,\,f(x)=\lim_{\substack{z\rightarrow
x\\z\in\hx}}f(z)
\end{equation}
Then, $f|_A$ is continuous.\end{prop}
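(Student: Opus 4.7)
The plan is to verify continuity of $f|_A$ pointwise at each $x_0\in A$. The hypothesis $f(x_0)=\lim_{z\to x_0,\,z\in\hx}f(z)$ is really just the statement that the restriction $f|_{\hx}$ is continuous at $x_0$, so since $A\subset\hx$, the further restriction $f|_A=(f|_{\hx})|_A$ should inherit continuity at $x_0$ automatically from the subspace topology on $A$. Thus the bulk of the proof is just unpacking the hypothesis.

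To carry this out I would fix $x_0\in A$ and an arbitrary open neighborhood $V$ of $f(x_0)$ in $Y$. Translating the limit hypothesis in terms of neighborhoods produces an open neighborhood $U$ of $x_0$ in $\tx$ such that $f(z)\in V$ for every $z\in U\cap\hx$ (the value $z=x_0$ causes no issue since $f(x_0)\in V$ anyway). Because $A\subset\hx$, we have $U\cap A\subset U\cap\hx$, so $f(U\cap A)\subset V$. Since $U\cap A$ is an open neighborhood of $x_0$ in the subspace topology of $A$, this gives continuity of $f|_A$ at $x_0$, and as $x_0\in A$ was arbitrary, $f|_A$ is continuous.

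An equivalent net version of the argument would proceed as follows: any net $\{a_\lambda\}\subset A$ with $a_\lambda\to x_0$ is, via $A\subset\hx$, a net in $\hx$ converging to $x_0\in\hx\subset\tx$, and the hypothesis then yields $f(a_\lambda)\to f(x_0)$ directly.

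The only real subtlety is pedantic, namely matching the formal meaning of $\lim_{z\to x_0,\,z\in\hx}f(z)$ (as the limit of $f$ along the trace on $\hx$ of the neighborhood filter of $x_0$) with the open-neighborhood formulation used above; this is a standard translation and is the one step I would be careful about writing out. Notably, regularity of $Y$ is not actually used by the argument I have sketched; I expect the author states it either for stylistic consistency with the rest of this section or so that a variant proof passing through closed neighborhoods remains available, but the direct restriction argument does not require it.
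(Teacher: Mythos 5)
Your argument is correct only for the statement read literally, with $A\subset\hx$ --- but that reading trivializes the proposition (it merely says that continuity of $f|_{\hx}$ at points of $A$ restricts to $f|_A$), and it is not the statement the paper needs or proves. The condition $A\subset\hx$ is a typo for $A\subset\tx$: the proposition is invoked in Proposition \ref{lhfescont} with $A=X=\tx\bs\hx$ (the corona) and with $A=\tx$, neither of which is contained in $\hx$. Your own closing observation that regularity of $Y$ is never used is the tell-tale sign that what you proved is a different, essentially empty, statement.

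For the intended statement your approach collapses at every point $x_0\in A\bs\hx$: if $\{a_\lambda\}\subset A$ converges to $x_0$, the hypothesis at $x_0$ only controls $f(z)$ for $z\in\hx$ near $x_0$, while the points $a_\lambda$ need not lie in $\hx$, so no restriction/subspace argument gives any information about $f(a_\lambda)$. The paper's proof handles exactly this case: by density it chooses, for each $a_\lambda$, a net $\{z_{\sigma^u}\}\subset\hx$ converging to it, applies the hypothesis both at each $a_\lambda$ and at $x_0$, and carries out an iterated-limit (diagonal) argument over the product directed set $\Lambda\times\prod_{u\in\Lambda}\Sigma^u$; regularity of $Y$ enters precisely there, through a neighborhood $W$ of $f(x_0)$ with $\overline W\subset V$, so that the limit values satisfy $f(a_\lambda)=\lim_{\sigma}f\big(z_{\sigma}\big)\in\overline W\subset V$. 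None of these ideas appears in your proposal, so relative to the proposition as intended (and as actually used in the paper) there is a genuine gap: the only nontrivial case is not addressed.
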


\begin{proof}

Take $x\in A$ and $\{x_\lambda\}_{\lambda\in\Lambda}\subset A$ such
that $x_\lambda\rightarrow x$. To prove the continuity of $f|_A$,
let us show that $f(x_\lambda)\rightarrow f(x)$. Let $V$ be a
neighborhood of $f(x)$ and let $W$ be an open neighborhood of $f(x)$
such that $\overline W\subset V$.

Since $\hx$ is dense, for all $u\in\Lambda$ there is a net
$\{z_{\sigma^u}\}_{\sigma^u\in \Sigma^u}\subset\hx$ such that
$z_{\sigma^u}\rightarrow x_u$. Thus, by (\ref{eq15a9573jkla}),
$f\big(z_{\sigma^u}\big)\rightarrow f(x_u)$ for all $u\in\Lambda$.

Consider the cofinal ordered set
$\Lambda\times\prod_{u\in\Lambda}\Sigma^u$ with the order
$\big(\lambda_0,(\sigma_0^u)_{u\in\Lambda}\big)\leq
\big(\lambda_1,(\sigma^u_1)_{u\in\Lambda}\big)$ if
$\lambda_0\leq\lambda_1$ and $\sigma_0^u\leq\sigma^u_1$ for every
$u\in\Lambda$.

Put
$z_{\big(\lambda,(\sigma^u)_{u\in\Lambda}\big)}=z_{\sigma^\lambda}$
for every
$\big(\lambda,(\sigma^u)_{u\in\Lambda}\big)\in\Lambda\times\prod_{u\in\Lambda}\Sigma^u$.

Let us see that
\begin{equation}\label{eq15a9573jklauu}
z_{\big(\lambda,(\sigma^u)_{u\in\Lambda}\big)}\rightarrow x
\end{equation}

Suppose $U$ is an open neighborhood of $x$ in $\tx$. Let us define
$\big(\lambda_0,(\sigma_0^u)_{u\in\Lambda}\big)$ as follows: choose
$\lambda_0$ such that $x_{\lambda}\in U$ for every
$\lambda\geq\lambda_0$. For all $u\geq\lambda_0$, since $x_u\in U$
and $z_{\sigma^u}\rightarrow x_u$, we may choose $\sigma_0^u$ with
$z_{\sigma^u}\in U$ for every $x_{\sigma^u}\geq x_{\sigma_0^u}$. For
all $u\not\leq\lambda_0$, take any $\sigma_0^u\in \Sigma^u$.

Hence, for every $\big(\lambda,(\sigma^u)_{u\in\Lambda}\big)\geq
\big(\lambda_0,(\sigma_0^u)_{u\in\Lambda}\big)$, we have that
$\lambda\geq\lambda_0$ and $\sigma^u\geq\sigma_0^{\lambda}$. Then,
$z_{\big(\lambda,(\sigma^u)_{u\in\Lambda}\big)}=z_{\sigma^\lambda}\in
U$.

(\ref{eq15a9573jklauu}) and (\ref{eq15a9573jkla}) show that
$f\Big(z_{\big(\lambda,(\sigma^u)_{u\in\Lambda}\big)}\Big)\rightarrow
f(x)$. Therefore, there exists
$\big(\lambda_1,(\sigma_1^u)_{u\in\Lambda}\big)$ such that for all
$\big(\lambda,(\sigma^u)_{u\in\Lambda}\big)\geq
\big(\lambda_1,(\sigma_1^u)_{u\in\Lambda}\big)$ we have that
$f\Big(z_{\big(\lambda,(\sigma^u)_{u\in\Lambda}\big)}\Big)\in W$.

Fix $\lambda\geq\lambda_1$ and
$\sigma_2^\lambda\geq\sigma_1^{\lambda}$. Take, for every
$u\in\Lambda$, $\sigma^u=\sigma_2^\lambda$ if $u=\lambda$ and
$\sigma^u=\sigma_1^u$ if $u\neq\lambda$. Hence,
$\big(\lambda,(\sigma^u)_{u\in\Lambda}\big)\geq
\big(\lambda_1,(\sigma_1^u)_{u\in\Lambda}\big)$, Thus:
$$f\big(z_{\sigma^\lambda_2}\big)=
f\Big(z_{\big(\lambda,(\sigma^u)_{u\in\Lambda}\big)}\Big)\in W$$

Therefore, $f\big(z_{\sigma_2^\lambda}\big)\in W$ for every
$\sigma_2^\lambda\geq\sigma_1^{\lambda}$, so
$f(x_{\lambda})=\lim_{\sigma^\lambda}
f\big(z_{\sigma^\lambda}\big)\in \overline W\subset V$. Hence,
$f(x_{\lambda})\in V$ for every $\lambda\geq\lambda_1$ and
$f(x_{\lambda})\rightarrow f(x)$.\end{proof}

\begin{deff}[Limit and total operators $l$ and $t$]
Let $\xiv$ be a compactification pack, $Y$ a topological space and
$\hf:\hx\rightarrow Y$ a map. If it can be defined, the limit
function $l(\hf):X\rightarrow Y$ is the one such that for every
$x\in X$
$$l(\hf)(x)=\lim_{\substack{z\rightarrow x\\z\in\hx}}\hf(z)$$.

In this case, the total function $t(\hf):\tx\rightarrow Y$ is the
one such that for every $x\in\tx$
$$t(\hf)(x)=\left\{\begin{array}{ll}\hf(x)&\text{if }x\in\hx\\
l(\hf)(x)&\text{if }x\in X\end{array}\right.$$
\end{deff}

\begin{prop}\label{lhfescont} Let $\xiv$ be a compactification pack and $Y$ a completely regular space.
Suppose $\hf:\hx\rightarrow Y$ is a map such that $l(f)$ is defined. Then:
\begin{enumerate}[a) ]
\item $l(f)$ is continuous.
\item If $f$ is continuous, then $t(f)$ is continuous.
\end{enumerate}\end{prop}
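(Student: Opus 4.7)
Both parts reduce immediately to Proposition \ref{extensioncontinua}, which is the substantive result (its proof contains the net-diagonalization argument). Note that a completely regular $Y$ is in particular regular, so that proposition applies. The one piece of setup I need is to realize that the correct object to feed into Proposition \ref{extensioncontinua} is not $\hf$ itself but its tentative extension $t(\hf):\tx\to Y$, which is well-defined precisely under the assumption that $l(\hf)$ exists.

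\textbf{Part (a).} I would take $f:=t(\hf):\tx\to Y$ and $A:=X\subset\tx$ in Proposition \ref{extensioncontinua}. For every $x\in X$, by the very definition of $t(\hf)$ on $X$,
\[
t(\hf)(x)=l(\hf)(x)=\lim_{\substack{z\to x\\ z\in\hx}}\hf(z)=\lim_{\substack{z\to x\\ z\in\hx}}t(\hf)(z),
\]
since $t(\hf)$ agrees with $\hf$ on $\hx$. Hypothesis (\ref{eq15a9573jkla}) thus holds, and Proposition \ref{extensioncontinua} delivers continuity of $t(\hf)|_X=l(\hf)$.

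\textbf{Part (b).} Assuming $\hf$ continuous, I would reapply Proposition \ref{extensioncontinua} to the same $f=t(\hf)$, but now with $A:=\tx$. The limit condition at points of $X$ is exactly what was checked in (a). At a point $x\in\hx$, the set $\hx$ is open in $\tx$ (because $X$ is closed) and $\hf$ is continuous there, so every net in $\hx$ converging to $x$ is eventually in $\hx$ and
\[
\lim_{\substack{z\to x\\ z\in\hx}}t(\hf)(z)=\lim_{\substack{z\to x\\ z\in\hx}}\hf(z)=\hf(x)=t(\hf)(x).
\]
Hence (\ref{eq15a9573jkla}) holds on all of $A=\tx$, and Proposition \ref{extensioncontinua} yields that $t(\hf)|_\tx=t(\hf)$ is continuous.

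\textbf{Main obstacle.} There is essentially none; all real work is already absorbed by Proposition \ref{extensioncontinua}. The only mild subtlety is to notice that, for handling points of $\hx$ in part (b), it is the openness of $\hx$ in $\tx$ together with continuity of $\hf$ that lets us identify the limit-from-$\hx$ with the value $\hf(x)$, so that the uniform hypothesis of Proposition \ref{extensioncontinua} can be applied to the whole space.
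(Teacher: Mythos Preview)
Your proof is correct and follows exactly the paper's approach: apply Proposition \ref{extensioncontinua} to $t(\hf)$ with $A=X$ for part (a) and $A=\tx$ for part (b). The paper's proof is the one-line statement of this reduction, while you have additionally spelled out the verification of hypothesis (\ref{eq15a9573jkla}) at points of $X$ and of $\hx$, which is straightforward but worth making explicit.
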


\begin{proof} a) and b) follow from Proposition \ref{extensioncontinua},
by taking $A=X$ and $A=\tx$, respectively.
\end{proof}

\begin{rmk} If $\xiv$ is a compactification pack and
$ f:\hx\rightarrow\Rset$ is a bounded function, then $ f$ we can be
seen as a function $ f:\hx\rightarrow\BBC\left(0,\|
f\|_\infty\right)$, being $\BBC\left(0,\| f\|_\infty\right)$
compact.
\end{rmk}

\begin{lema}\label{lemaparasumaretc} Let $\xiv$ be a compactification pack and $Y$ a topological space. Consider a
sequence $\{Y_i\}_{i=1}^n$ of topological spaces and one $\{
f_i:\hx\rightarrow Y_i\}_{i=1}^n$ of maps such that $l( f_i)$ is
defined for each $i$. Let $g:Y_1\times\cdots\times Y_m\rightarrow Y$
be a continuous function. Then, the map $\phi:\hx\rightarrow Y$,
$x\rightarrow g\big( f_1(x),\cdots, f_n(x)\big)$ satisfies
$l(\phi)(x)=g\big(l( f_1)(x),\cdots,l( f_n)(x)\big)$ for every $x\in
X$.\end{lema}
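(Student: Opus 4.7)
The plan is a short net-convergence argument, exploiting only the definition of $l$ as a limit through $\hx$ together with the continuity of $g$ on the finite product $Y_1\times\cdots\times Y_n$. Fix $x\in X$; the goal reduces to verifying
$$\lim_{\substack{z\to x\\ z\in\hx}} g\big(f_1(z),\ldots,f_n(z)\big)=g\big(l(f_1)(x),\ldots,l(f_n)(x)\big),$$
which is precisely the desired identity $l(\phi)(x)=g\big(l(f_1)(x),\ldots,l(f_n)(x)\big)$.

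I would pick an arbitrary net $\{z_\lambda\}\subset\hx$ with $z_\lambda\to x$ in $\tx$. The hypothesis that $l(f_i)$ is defined at $x$ says exactly that $f_i(z_\lambda)\to l(f_i)(x)$ in $Y_i$ for every $i=1,\ldots,n$. Since convergence in the product topology on $Y_1\times\cdots\times Y_n$ is equivalent to componentwise convergence of all coordinates, this at once gives
$$\big(f_1(z_\lambda),\ldots,f_n(z_\lambda)\big)\longrightarrow \big(l(f_1)(x),\ldots,l(f_n)(x)\big).$$
Applying the continuous map $g$ to this convergent net yields
$$\phi(z_\lambda)=g\big(f_1(z_\lambda),\ldots,f_n(z_\lambda)\big)\longrightarrow g\big(l(f_1)(x),\ldots,l(f_n)(x)\big).$$

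Because this conclusion is valid for every net in $\hx$ converging to $x$, the limit $\lim_{z\to x,\,z\in\hx}\phi(z)$ exists and coincides with $g\big(l(f_1)(x),\ldots,l(f_n)(x)\big)$, which is exactly the asserted formula. There is no real obstacle here: the lemma simply packages the continuity of $g$ with the componentwise characterisation of convergence in a product, and none of the earlier machinery (in particular Proposition \ref{extensioncontinua}) needs to be invoked. The only point to keep in mind while writing it up is that the definition of $l(\phi)(x)$ implicitly requires the existence of the limit through $\hx$; the argument above establishes existence and computes the value simultaneously.
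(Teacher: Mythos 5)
Your argument is correct and is essentially the paper's own proof: the paper also computes $l(\phi)(x)$ by passing the limit through $g$, using continuity of $g$ together with componentwise convergence in the product $Y_1\times\cdots\times Y_n$; you have merely written out explicitly the net argument that the paper's one-line computation leaves implicit. Your remark that this simultaneously establishes the existence of the limit defining $l(\phi)(x)$ is a fair (and harmless) clarification.
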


\begin{proof}
For every $x\in X$:
$$l(\phi)(x)= \lim_{\substack{z\rightarrow
x\\z\in\hx}}g\big( f_1(z),\cdots, f_n(z)\big)=$$
$$g\big(\lim_{\substack{z\rightarrow x\\z\in\hx}}
f_1(z),\cdots,\lim_{\substack{z\rightarrow x\\z\in\hx}} f_n(z)\big)=
g\big(l( f_1)(x),\cdots,l( f_n)(x)\big)$$\end{proof}

From Lemma \ref{lemaparasumaretc} we get:

\begin{exa}\label{ejlsuma}
If $\xiv$ is a compactification pack and $f_1,f_2:\hx\rightarrow
\Rset$ are maps such that $l(f_1)$ y $l(f_2)$ are defined, then
$l(f_1+f_2)=l(f_1)+f(l_2)$ and $l(f_1\cdot f_2)=l(f_1)\cdot l(f_2)$.

\end{exa}
\begin{exa}
If $\xiv$ is a compactification pack, $Y$ is regular,
$f:\hx\rightarrow Y$ is such that $l( f)$ is defined and $g\in
C(Y)$, then $l(g\circ f)=g|_X\circ l( f)$.
\end{exa}
\begin{exa}
If $\xiv$ is a compactification pack, $Y$ is regular, $ f_1,
f_2:\hx\rightarrow Y$ are such that $l( f_1)$ and $l( f_2)$ are
defined and $\dd'$ is a pseudometric of $Y$, then the funcion
 $\phi:\hx\rightarrow \Rset$, $\phi(z)=\dd( f_1(z), f_2(z))$ satisfies
$l(\phi)(x)=\dd'\big(l( f_1)(x),l( f_2)(x)\big)$ for every $x\in
X$.\end{exa}

From Example \ref{ejlsuma}, we get:

\begin{lema} If $\xiv$ is a compactification pack, then
$$B_l(\hx,\tx)=\{\hx:\hx\rightarrow \Rset\text{ bounded such that }l( f)\text{ is defined}\}$$
is an algebra and $l:B_l(\hx,\tx)\rightarrow C(X)$, a morphism of
algebras.

\end{lema}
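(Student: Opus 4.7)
The plan is to verify three things in turn: that $B_l(\hx,\tx)$ is closed under the pointwise algebra operations (so it is a subalgebra of the bounded real-valued functions on $\hx$), that $l(f)$ actually lies in $C(X)$ for every $f\in B_l(\hx,\tx)$ (so the target of $l$ makes sense), and that $l$ respects sums and products. Essentially every step reduces to a fact that has already been established, so I do not anticipate a substantial obstacle; the main content is the pointwise closure of the class of bounded functions and the observation that $\Rset$ is completely regular.

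First I would pick $f_1,f_2\in B_l(\hx,\tx)$. The functions $f_1+f_2$ and $f_1\cdot f_2$ are bounded by standard estimates, and Example \ref{ejlsuma} tells us that $l(f_1+f_2)$ and $l(f_1\cdot f_2)$ are defined and equal $l(f_1)+l(f_2)$ and $l(f_1)\cdot l(f_2)$, respectively; in particular both sums and products belong to $B_l(\hx,\tx)$. For scalar multiplication by $\lambda\in\Rset$ I would invoke the product rule with the constant function $\lambda$, whose limit at every $x\in X$ is $\lambda$ (so $\lambda\in B_l(\hx,\tx)$), obtaining $l(\lambda f)=\lambda\, l(f)$. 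The constant $1$ is bounded and satisfies $l(1)=1$, so $B_l(\hx,\tx)$ is indeed a unital subalgebra of the bounded real-valued functions on $\hx$.

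Next I would check that $l$ takes values in $C(X)$. For $f\in B_l(\hx,\tx)$ the limit map $l(f)\colon X\to\Rset$ is defined, and since $\Rset$ is completely regular, Proposition \ref{lhfescont}(a) guarantees that $l(f)$ is continuous, hence an element of $C(X)$.

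Finally, the morphism property is already in hand: the identities $l(f_1+f_2)=l(f_1)+l(f_2)$, $l(f_1\cdot f_2)=l(f_1)\cdot l(f_2)$, $l(\lambda f)=\lambda\, l(f)$ and $l(1)=1$ established above are exactly the axioms for a morphism of unital $\Rset$-algebras. Thus the whole statement follows directly from Example \ref{ejlsuma} and Proposition \ref{lhfescont}, together with the trivial remark that sums, products and scalar multiples of bounded functions are bounded.
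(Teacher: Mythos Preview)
Your proof is correct and follows essentially the same approach as the paper: the paper simply states that the lemma follows from Example \ref{ejlsuma} (which gives the sum and product rules for $l$), and you have spelled this out in detail, additionally making explicit the use of Proposition \ref{lhfescont}(a) to ensure $l(f)\in C(X)$.
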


\begin{deff} If $\hx$ is a locally compact space, then $B_0(\hx)$ is the family of bounded functions
$f:\hx\rightarrow\Rset$ vanishing at infinity (i. e., such that
$\lim_{z\rightarrow\infty}f(z)=0$).\end{deff}

\begin{lema}\label{caractb0limit} If $\xiv$  is a compactification pack and $f:\hx\rightarrow \Rset$ a bounded map, then
$f\in B_0(\hx)$ if and only if $l( f)=0$.\end{lema}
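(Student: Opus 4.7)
The plan is to exploit the basic dichotomy for nets in $\hx$ inherited from the compactification pack: since $\tx$ is compact Hausdorff and $\hx$ is open in $\tx$ with complement $X$, a net $\{z_\lambda\}\subset\hx$ tends to $\infty$ in the locally compact sense if and only if every convergent (in $\tx$) subnet has its limit in $X$. I would prove each implication by passing between nets going to infinity and nets converging to points of $X$.

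For the ``only if'' direction, I would assume $f\in B_0(\hx)$ and verify that for each $x\in X$ the limit defining $l(f)(x)$ exists and equals $0$. Take any net $\{z_\lambda\}\subset\hx$ with $z_\lambda\to x$ in $\tx$. I claim $z_\lambda\to\infty$ in $\hx$: otherwise some subnet stays in a relatively compact (hence compact, taking closures in $\tx$) subset $K\subset\hx$, and a further subnet would converge in $\tx$ to a point of $\overline{K}\subset\hx$, which contradicts Hausdorffness of $\tx$ since that subnet must also converge to $x\in X$. Since $z_\lambda\to\infty$ and $f\in B_0(\hx)$, $f(z_\lambda)\to 0$; this being true for every such net gives $l(f)(x)=0$.

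For the ``if'' direction, assume $l(f)$ is defined and identically $0$. Suppose, towards contradiction, that $f\notin B_0(\hx)$: then there exist $\epsil>0$ and a net $\{z_\lambda\}\subset\hx$ with $z_\lambda\to\infty$ and $|f(z_\lambda)|\geq\epsil$ for all $\lambda$. By compactness of $\tx$, some subnet $\{z_{\lambda_\mu}\}$ converges in $\tx$ to a point $x\in\tx$. The subnet also goes to $\infty$ in $\hx$, so the argument above forces $x\in X$. Then by hypothesis $f(z_{\lambda_\mu})\to l(f)(x)=0$, contradicting $|f(z_{\lambda_\mu})|\geq\epsil$.

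The only delicate point is the claim that convergence of a net in $\hx$ to a point of $X$ forces the net to escape every compact subset of $\hx$; it is a straightforward consequence of $\tx$ being compact Hausdorff and $\hx$ being open in $\tx$, and no deeper machinery is needed beyond that and the boundedness of $f$ used implicitly to stay inside a compact interval when invoking cluster points of $\{f(z_\lambda)\}$.
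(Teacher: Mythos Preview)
Your proof is correct. The forward direction is essentially identical to the paper's: a net in $\hx$ converging to a point of $X$ must go to $\infty$, so $f\in B_0(\hx)$ forces $l(f)(x)=0$. The paper states this in one line without justifying why $z_\lambda\to\infty$; you spell it out via a Hausdorff-uniqueness contradiction, which is fine.

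The backward direction is where your argument genuinely differs from the paper's. The paper gives a direct open-cover argument: given $\epsil>0$, for each $x\in X$ the condition $l(f)(x)=0$ yields an open neighborhood $U_x$ of $x$ in $\tx$ on which $|f|<\epsil$; then $K=\tx\setminus\bigcup_{x\in X}U_x$ is a compact subset of $\hx$ outside of which $|f|<\epsil$. You instead argue by contradiction with nets: a witness $\{z_\lambda\}\to\infty$ with $|f(z_\lambda)|\geq\epsil$ has a subnet converging in $\tx$ to some $x\in X$, and then $f(z_{\lambda_\mu})\to l(f)(x)=0$ gives the contradiction. Both are short and elementary; the paper's version is constructive and makes the compact $K$ explicit, while yours stays entirely within the net language you set up at the start and avoids building the cover. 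One minor remark: your closing comment about boundedness of $f$ being ``used implicitly to stay inside a compact interval when invoking cluster points of $\{f(z_\lambda)\}$'' is unnecessary---you never actually invoke cluster points of the values $f(z_\lambda)$, since the hypothesis already hands you the limit $l(f)(x)=0$ directly.
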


\begin{proof}If $ f\in B_0(\hx)$, $x\in X$ and $
\{z_\lambda\}\subset\hx$ with $z_\lambda\rightarrow x$, then
$z_\lambda\rightarrow\infty$ in $\hx$ and, hence $l( f)(x)=\lim
f(z_\lambda)=0$. Thus, $l(f)=0$.

Suppose now that $l( f)=0$. Fix $\epsil>0$. Since for every $x\in X$
$\lim_{\substack{z\rightarrow x\\z\in\hx}} f(z)=l( f)(x)=0$, there
exists a open neighborhood $U_x$ of $x$ in $\tx$ such that $|
f(z))|<\epsil$ in $U_x\cap \hx$. Consider the compact subset of
$\hx$ $K=\tx\bs \bigcup_{x\in X}^n U_{x}$. Clearly, $|f(x)|<\epsil$
outside $K$ and hence, $f\in B_0(\hx)$.
\end{proof}

\begin{lema}\label{algebrabl} If $\xiv$ is a compactification pack, then
$B_l(\hx,\tx)=C(\tx)|_\hx+B_0(\hx)$.\end{lema}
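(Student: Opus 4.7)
I would prove both inclusions separately. The inclusion $C(\tx)|_\hx + B_0(\hx)\subset B_l(\hx,\tx)$ is essentially immediate: given $g|_\hx + h$ with $g\in C(\tx)$ and $h\in B_0(\hx)$, the function $g|_\hx$ is bounded (because $\tx$ is compact) and $l(g|_\hx)(x)=g(x)$ for every $x\in X$ by continuity of $g$, while $l(h)=0$ by Lemma \ref{caractb0limit}. Linearity of the limit operator (Example \ref{ejlsuma}) then shows that $l(g|_\hx+h)$ is defined, so the sum lies in $B_l(\hx,\tx)$.

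For the nontrivial inclusion $B_l(\hx,\tx)\subset C(\tx)|_\hx + B_0(\hx)$, I would start with $f\in B_l(\hx,\tx)$ and look at $l(f):X\to\Rset$. By Proposition \ref{lhfescont}(a) this function is continuous on $X$. Since $X$ is a closed subset of the compact Hausdorff (hence normal) space $\tx$, Tietze's extension theorem gives a continuous extension $g:\tx\to\Rset$ with $g|_X=l(f)$; in particular $g$ is bounded on the compact space $\tx$.

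Now set $h:=f-g|_\hx$. Since $f$ and $g|_\hx$ are both bounded, so is $h$. For every $x\in X$, the continuity of $g$ at $x$ gives $\lim_{z\to x,\,z\in\hx}g(z)=g(x)=l(f)(x)$, hence $l(g|_\hx)(x)=l(f)(x)$. By linearity of $l$ (Example \ref{ejlsuma}), $l(h)=l(f)-l(g|_\hx)=0$ on $X$. Applying Lemma \ref{caractb0limit} to the bounded function $h$ concludes that $h\in B_0(\hx)$, and thus $f=g|_\hx+h\in C(\tx)|_\hx+B_0(\hx)$.

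I don't foresee a real obstacle: the only nonroutine ingredient is invoking Tietze on the compact Hausdorff space $\tx$ to promote the continuous function $l(f)$ defined on the corona to a continuous function on the whole compactification. Everything else is a direct application of the already-established facts that $l$ is linear, that $l(h)=0$ characterises vanishing at infinity among bounded functions, and that continuous functions on $\tx$ restrict to elements of $B_l(\hx,\tx)$ whose limits at $X$ agree with their values.
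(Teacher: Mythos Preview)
Your proposal is correct and follows essentially the same approach as the paper: obtain continuity of $l(f)$ on $X$, extend it via Tietze to some $g\in C(\tx)$, and then check that $f-g|_\hx\in B_0(\hx)$ using linearity of $l$ together with Lemma~\ref{caractb0limit}. Your write-up is in fact a bit more explicit about the boundedness hypotheses and about which earlier results are being invoked, but the argument is the same.
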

\begin{proof}
Clearly, $C(\tx)|_\hx,B_0(\hx)\subset B_l(\hx,\tx)$, then we have
one inclusion. To see the other, take $f\in B_l(\hx,\tx)$. Since
$L(f):X\rightarrow \Rset$ is continuous, there is a continuous
extension $g:\tx\rightarrow \Rset$. Then, $g|_\hx\in C(\tx)|_\hx$.
Observe that $L(f-g|_\hx)=L(f)-L(g|_\hx)=L(f)-g|_X=L(f)-L(f)=0$.
Thus $f-g|_\hx\in B_0(\hx)$. Therefore, $f=g|_\hx+(f-g|_\hx)\in
C(\tx)|_\hx+B_0(\hx)$.\end{proof}

\begin{lema}\label{xinftysiiyinfty} Let $\xiv$ be a compactification
pack.
Suppose $\ddd$ is a family of pseudometrics which genere $\tx$'s
topology and let $\{(x_\lambda,y_\lambda)\}\subset\hx\times\hx$ be a
net such that $\dd(x_\lambda,y_\lambda)\rightarrow 0$ for all
$\dd\in\ddd$. Then:
\begin{enumerate}[a) ]
\item $x_\lambda\rightarrow\infty$ in $\hx$ if and only if
$y_\lambda\rightarrow\infty$ in $\hx$
\item $(x_\lambda,y_\lambda)\rightarrow\infty$ in $\hx\times\hx$ if and only if
$x_\lambda\rightarrow\infty$ and $y_\lambda\rightarrow\infty$ in
$\hx$.
\end{enumerate}\end{lema}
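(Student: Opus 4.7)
The plan is to prove (a) by a subnet/compactness argument and deduce (b) as an immediate consequence of (a) and Remark~\ref{obsepropioinfty}.

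For (a), by symmetry it suffices to show that if $x_\lambda\to\infty$ in $\hx$ then $y_\lambda\to\infty$ in $\hx$. I would argue by contradiction: suppose not. Then there is a compact $K\subset\hx$ and a subnet $\{y_{\lambda_\mu}\}$ with $y_{\lambda_\mu}\in K$ for all $\mu$. Since $\tx$ is compact Hausdorff, by passing to a further subnet (and, simultaneously, to the corresponding subnet of $\{x_{\lambda_\mu}\}$, again using compactness of $\tx$), I may assume $y_{\lambda_\mu}\to y\in\tx$ and $x_{\lambda_\mu}\to x\in\tx$. Because $K$ is compact, hence closed in $\tx$, we have $y\in K\subset\hx$. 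On the other hand, the original net $x_\lambda\to\infty$ in $\hx$, so any subnet also converges to infinity in $\hx$, which forces $x\notin\hx$, i.e.\ $x\in X$.

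Now I would use the continuity of each $\dd\in\ddd$ on $\tx\times\tx$ (this is part of what it means for $\ddd$ to be a family of pseudometrics of $\tx$): from $x_{\lambda_\mu}\to x$ and $y_{\lambda_\mu}\to y$, we get $\dd(x_{\lambda_\mu},y_{\lambda_\mu})\to \dd(x,y)$ for every $\dd\in\ddd$. By hypothesis the left-hand side tends to $0$, so $\dd(x,y)=0$ for every $\dd\in\ddd$. Because $\tx$ is compact Hausdorff and $\ddd$ generates its topology, $\ddd$ must separate points (as recalled in the preliminaries), whence $x=y$. This contradicts $x\in X$ and $y\in\hx$, proving (a).

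For (b), Remark~\ref{obsepropioinfty} already states that in a locally compact space, $(x_\lambda,y_\lambda)\to\infty$ in $\hx\times\hx$ iff $x_\lambda\to\infty$ or $y_\lambda\to\infty$ in $\hx$; combined with (a), either condition forces both, so this disjunction coincides with the conjunction, which is exactly the statement of (b).

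The only delicate step is the double extraction of subnets and the verification that the target space of the limit is $\tx$ (not $\hx$), so that we may legitimately evaluate the continuous pseudometrics at the limit points; everything else is a routine compactness/separation argument.
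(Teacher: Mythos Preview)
Your proof is correct and follows essentially the same approach as the paper: extract a convergent subnet and use that $\ddd$ generates the topology of $\tx$. The paper's version is marginally more direct: it argues the contrapositive of (a) (if $x_\lambda\not\to\infty$, take a subnet $x_{\lambda'}\to x\in\hx$, then the triangle inequality $\dd(y_{\lambda'},x)\le \dd(y_{\lambda'},x_{\lambda'})+\dd(x_{\lambda'},x)\to 0$ gives $y_{\lambda'}\to x\in\hx$), so only one subnet extraction is needed and the separation-of-points step is avoided; part (b) is handled identically.
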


\begin{proof}
If $x_\lambda\not\rightarrow\infty$, then there exists a convergent
subnet $x_{\lambda'}\rightarrow x\in\hx$. Since, for every $\dd\in
\ddd$,
$\dd(y_{\lambda'},x)\leq\dd(y_{\lambda'},x_{\lambda'})+\dd(x_{\lambda'},x)\rightarrow
0$, we get $y_{\lambda'}\rightarrow x$ and hence
$y_\lambda\not\rightarrow\infty$. By symmetry,
$y_\lambda\not\rightarrow\infty$ implies
$x_\lambda\not\rightarrow\infty$ and we get a). b) is easily deduced
from a) taking into account that
$(x_\lambda,y_\lambda)\rightarrow\infty$ if and only if
$x_\lambda\rightarrow\infty$ or
$y_\lambda\rightarrow\infty$.\end{proof}

\begin{lema}[technical]\label{caractconvcompto} Let $X$ be a compact Hausdorff
space, $x\in X$ and $\{x_\lambda\}\subset X$ a net. Then
$x_\lambda\rightarrow x$ if and only if for all convergent subnet
$x_{\lambda'}\rightarrow y\in X$ we have that $y=x$.\end{lema}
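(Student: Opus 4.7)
The plan is to prove both implications separately, using two standard facts: that subnets of a convergent net inherit the limit, and that every net in a compact space has a convergent subnet.

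For the forward direction, I would simply invoke the fact that if $x_\lambda\to x$, then every subnet also converges to $x$, and then use Hausdorffness of $X$ to conclude that if a subnet converges to some $y\in X$, then $y=x$ by uniqueness of limits.

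For the converse, I would argue by contradiction. Suppose every convergent subnet has limit $x$, but $x_\lambda\not\to x$. Then there is an open neighborhood $U$ of $x$ such that the set $\Lambda_U=\{\lambda:x_\lambda\notin U\}$ is cofinal in the index set. Restricting to this cofinal subset yields a subnet $\{x_{\lambda'}\}$ entirely contained in $X\setminus U$. Since $X$ is compact (hence every net has a convergent subnet), this subnet admits a convergent subsubnet $x_{\lambda''}\to y\in X$. Being a subnet of the original net, the hypothesis forces $y=x$; but $x_{\lambda''}\in X\setminus U$ for all indices and $X\setminus U$ is closed, so $y\in X\setminus U$, contradicting $y=x\in U$.

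The only subtle point, and the one I expect to be the main (mild) obstacle, is making sure the "subnet of a subnet is a subnet" bookkeeping is handled cleanly, and citing (or implicitly invoking) the compactness characterization that every net in a compact space has a convergent subnet. Neither is deep, but one must be careful that the hypothesis "every convergent subnet of $\{x_\lambda\}$ has limit $x$" genuinely applies to the subsubnet we constructed. The proof is essentially a few lines once this is observed.
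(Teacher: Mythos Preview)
Your proposal is correct and follows essentially the same route as the paper's proof: forward direction by Hausdorffness and inheritance of limits by subnets, converse by contradiction via a subnet avoiding a neighborhood $U$ of $x$, then extracting a convergent subsubnet by compactness whose limit lies in the closed set $X\setminus U$. The paper is slightly terser (it simply asserts the existence of the avoiding subnet rather than constructing it from a cofinal index set), but the argument is the same.
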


\begin{proof} Since $X$ is Hausdorff, necessity is obvious. To prove sufficiency, suppose  $x_\lambda\not\rightarrow x$. Then, there exists a open neighborhood
 $U$ of $x$ and a subnet $x_{\lambda'}$ such that
$x_{\lambda'}\not\in U$ for all $\lambda'$. Since $X$ is compact,
$x_{\lambda'}$ has a convergent subnet $x_{\lambda''}\rightarrow y$.
$y\in X\bs U$, because $X\bs U$ is closed, and hence $y\neq
x$.\end{proof}

\begin{lema}\label{lemaequil1}Let $\xiv$ be a compactification pack and $Y$ a compact Hausdorff space.
Suppose $\ddd$ and $\ddd'$ are families of pseudometrics of $\tx$
and $Y$ which generate their topologies respectively. Consider a map
$f:\hx\rightarrow Y$. Then,
\begin{enumerate}[a) ]
\item $l( f)$ is defined.
\item For every net $\{(x_\lambda,y_\lambda)\}\subset\hx\times\hx$:
if $(x_\lambda,y_\lambda)\rightarrow\infty$ and
$\dd(x_\lambda,y_\lambda)\rightarrow 0$ for every $\dd\in\ddd$, then
 $\dd'( f(x_\lambda), f(y_\lambda))\rightarrow 0$ for every
$\dd'\in\ddd'$.\end{enumerate} are equivalent.\end{lema}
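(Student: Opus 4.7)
The plan is to prove both directions by passing to appropriate subnets and using the compactness of $\tx$ and $Y$ together with the fact that a family of pseudometrics generating the topology of a compact Hausdorff space separates points.

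For $(a)\Rightarrow(b)$, I would argue by contradiction. Suppose $(x_\lambda,y_\lambda)$ satisfies the hypotheses of (b) but $\dd'(f(x_\lambda),f(y_\lambda))\not\to 0$ for some $\dd'\in\ddd'$; then there is $\epsil>0$ and a subnet along which $\dd'(f(x_\lambda),f(y_\lambda))\geq\epsil$. Since $\tx$ is compact, I pass to a further subnet so that $x_\lambda\to p$ and $y_\lambda\to q$ in $\tx$. Because $(x_\lambda,y_\lambda)\to\infty$ in $\hx\times\hx$, Remark \ref{obsepropioinfty} forces $x_\lambda\to\infty$ and $y_\lambda\to\infty$ in $\hx$, hence $p,q\in X$. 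From $\dd(x_\lambda,y_\lambda)\to 0$ for all $\dd\in\ddd$ and the triangle inequality $\dd(p,q)\leq\dd(p,x_\lambda)+\dd(x_\lambda,y_\lambda)+\dd(y_\lambda,q)$, I get $\dd(p,q)=0$ for every $\dd\in\ddd$; since $\ddd$ generates the Hausdorff topology of $\tx$, it separates points, so $p=q$. Now $l(f)$ being defined means, by the very definition, $f(x_\lambda)\to l(f)(p)$ and $f(y_\lambda)\to l(f)(p)$, whence $\dd'(f(x_\lambda),f(y_\lambda))\to 0$, contradicting the bound $\geq\epsil$.

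For $(b)\Rightarrow(a)$, fix $x\in X$ and a net $\{z_\lambda\}\subset\hx$ with $z_\lambda\to x$; I need $\{f(z_\lambda)\}$ to converge in $Y$. Since $Y$ is compact Hausdorff, by Lemma \ref{caractconvcompto} it suffices to show that any two convergent subnets have the same limit. So suppose $f(s_n)\to u$ and $f(t_m)\to v$ with $\{s_n\}$, $\{t_m\}\subset\hx$ both converging to $x$ in $\tx$. Form the product directed set $N\times M$ and consider the net $(p_{(n,m)},q_{(n,m)}):=(s_n,t_m)$ in $\hx\times\hx$. Both coordinates still converge to $x$, so for every $\dd\in\ddd$, $\dd(p_{(n,m)},q_{(n,m)})\leq\dd(p_{(n,m)},x)+\dd(x,q_{(n,m)})\to 0$; since $x\in X$ and $p_{(n,m)},q_{(n,m)}\in\hx$, both coordinates tend to infinity in $\hx$, so $(p_{(n,m)},q_{(n,m)})\to\infty$ in $\hx\times\hx$ by Remark \ref{obsepropioinfty}. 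Hypothesis (b) then gives $\dd'(f(p_{(n,m)}),f(q_{(n,m)}))\to 0$ for every $\dd'\in\ddd'$. But $f(p_{(n,m)})=f(s_n)\to u$ and $f(q_{(n,m)})=f(t_m)\to v$ in the product net (since co-finality in $N\times M$ forces co-finality in each factor), so $\dd'(u,v)=0$ for every $\dd'\in\ddd'$. Because $\ddd'$ separates points in $Y$, we conclude $u=v$.

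The main technical obstacle is the $(b)\Rightarrow(a)$ direction, specifically the interleaving via the product directed set: one has to justify that the individual subnet convergences $f(s_n)\to u$ and $f(t_m)\to v$ are preserved when reindexing over $N\times M$, and that the diagonal estimate $\dd(s_n,t_m)\to 0$ really holds in the product order. Once this bookkeeping is set up, the use of $\ddd$ and $\ddd'$ to separate points closes both arguments cleanly.
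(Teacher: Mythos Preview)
Your proof is correct and follows essentially the same strategy as the paper's: both directions pass to convergent subnets via compactness, build a product-indexed net to compare two approaching nets, and close with the fact that the generating families of pseudometrics separate points. One small correction in your $(a)\Rightarrow(b)$ argument: Remark~\ref{obsepropioinfty} alone only yields $x_\lambda\to\infty$ \emph{or} $y_\lambda\to\infty$; upgrading this to ``and'' (so that $p,q\in X$) uses the hypothesis $\dd(x_\lambda,y_\lambda)\to 0$ for all $\dd\in\ddd$, i.e.\ Lemma~\ref{xinftysiiyinfty}(b) rather than the bare remark.
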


\begin{proof}
Suppose b). Fix $x\in X$ and take a net $\{x_\lambda\}_{\lambda\in
\Lambda}\subset\hx$ with $x_\lambda\rightarrow x$. Since
$\{f(x_\lambda)\}\subset Y$ and $Y$ is compact, there exists a
subnet $\{x_{\lambda'}\}_{\lambda'\in \Lambda'}$ such that $
f(x_{\lambda'})\rightarrow x'\in Y$.

Consider a net $\{y_\sigma\}_{\sigma\in\Sigma}\subset\hx$ with
$y_\sigma\rightarrow x$. Let us show that $f(y_\sigma)\rightarrow
x'$ by using Lemma \ref{caractconvcompto}'s characterization.

Let $\{y_{\sigma'}\}_{\sigma'\in\Sigma'}$ be any subnet with
$f(y_{\sigma'})\rightarrow z'$. Consider the net
$\{(x_{(\lambda',\sigma')},y_{(\lambda',\sigma')})\}_{(\lambda',\sigma')\in\Lambda'\times\Sigma'}$,
where $x_{(\lambda',\sigma')}=x_{\lambda'}$ and
$y_{(\lambda',\sigma')}=y_{\sigma'}$. Clearly,
$x_{(\lambda',\sigma')}\rightarrow x$,
$y_{(\lambda',\sigma')}\rightarrow x$ $
f\big(x_{(\lambda',\sigma')}\big)\rightarrow x'$ and $
f\big(y_{(\lambda',\sigma')}\big)\rightarrow z'$.

Since,
$\dd(x_{(\lambda',\sigma')},y_{(\lambda',\sigma')})\rightarrow
\dd(x,x)=0$ for every $\dd\in\ddd$, b) shows that $\dd'(x',z')=\lim
\dd'\Big( f(x_{(\lambda',\sigma')}), f(y_{(\lambda',\sigma')})\Big)=
0$, for every $\dd'\in\ddd'$. Thus, $z'=x'$ and $
f(y_\sigma')\rightarrow x'$. Therefore,
$\lim_{\substack{y\rightarrow x\\ y\in\hx}} f(y)= x'$ and $l( f)$ is
defined.

Suppose that b) doesn't hold. Let
$(x_\lambda,y_\lambda)\subset\hx\times\hx$ be such that
$x_\lambda\rightarrow\infty$, $\dd(x_\lambda,y_\lambda)\rightarrow
0$ for every $\dd\in\ddd$ but there exist $\dd'_0\in\ddd$,
$\epsil>0$ and a subnet $\{x_{\lambda'}\}$ such that $\dd'_0(
f(x_{\lambda'}), f(y_{\lambda'}))\geq\epsil$ for every $\lambda'$.
Since $\tx\times\tx\times Y\times Y $ is compact, there exists a
subnet $\lambda''$ of $\lambda'$ such that
$(x_{\lambda''},y_{\lambda''}, f(x_{\lambda''}),
f(y_{\lambda''}))\rightarrow (x,y,z,t)$.

$x=y$, because $\dd(x,y)=\lim\dd(x_{\lambda''},y_{\lambda''})=0$ for
every $\dd\in\ddd$. But $z\neq t$, because $\dd_0(z,t)=\lim(
f(x_{\lambda''}), f(y_{\lambda''}))\geq\epsil$. Therefore,
$x_{\lambda''}\rightarrow x$, $y_{\lambda''}\rightarrow x$ but $\lim
f(x_{\lambda''})\neq\lim f(y_{\lambda''})$, then
$\lim_{\substack{\widehat x\rightarrow x\\\widehat x\in\hx}}
f(\widehat x)$ is not defined and, consequently, neither $l( f)$.
\end{proof}

\begin{lema}\label{lemaequil2} Let $\xiv$  be a compactification pack and $Y$, a compact Hausdorff space. Suppose $\ddd'$
is a family of pseudometrics of $Y$ which genere its topology.
Consider the maps $f,g:\hx\rightarrow Y$, with $l(f)$ defined. Then,
\begin{enumerate}[a) ]
\item $l(g)=l(f)$
\item For every $\dd'\in \ddd'$,
$\lim_{z\rightarrow\infty}\dd'(f(z),g(z))=0$.
\end{enumerate}
are equivalent
\end{lema}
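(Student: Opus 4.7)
The plan is to prove the equivalence directly, in both directions, using the machinery already in place: the example that $l$ commutes with composition against a continuous pseudometric, and Lemma \ref{caractb0limit} characterizing $B_0(\hx)$ as the kernel of $l$ on bounded functions. Note also that since $Y$ is compact, every $\dd'\in\ddd'$ is bounded on $Y\times Y$, so the auxiliary functions we form will automatically lie in $B_l(\hx,\tx)$ once both limits are defined.

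For a)$\Rightarrow$b): fix $\dd'\in\ddd'$ and set $\phi(z)=\dd'(f(z),g(z))$. Since $l(f)$ is defined and $l(g)=l(f)$, in particular $l(g)$ is defined, so by the earlier example on pseudometric compositions we have $l(\phi)(x)=\dd'\bigl(l(f)(x),l(g)(x)\bigr)=\dd'\bigl(l(f)(x),l(f)(x)\bigr)=0$ for every $x\in X$. Because $Y$ is compact, $\phi$ is bounded, so Lemma \ref{caractb0limit} yields $\phi\in B_0(\hx)$, which is exactly the statement $\lim_{z\rightarrow\infty}\dd'(f(z),g(z))=0$.

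For b)$\Rightarrow$a): fix $x\in X$ and any net $\{z_\lambda\}\subset\hx$ with $z_\lambda\rightarrow x$; since $X$ is in the corona, $z_\lambda\rightarrow\infty$ in $\hx$. For every $\dd'\in\ddd'$ the triangle inequality gives
\[
\dd'\bigl(g(z_\lambda),l(f)(x)\bigr)\leq \dd'\bigl(g(z_\lambda),f(z_\lambda)\bigr)+\dd'\bigl(f(z_\lambda),l(f)(x)\bigr),
\]
where the first summand tends to $0$ by hypothesis b) and the second summand tends to $0$ because $l(f)$ is defined (so $f(z_\lambda)\rightarrow l(f)(x)$ in $Y$, hence $\dd'$-distances vanish). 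Since $\ddd'$ generates the topology of $Y$, this shows $g(z_\lambda)\rightarrow l(f)(x)$. As the limit point does not depend on the chosen net, $\lim_{z\rightarrow x,\, z\in\hx} g(z)$ exists and equals $l(f)(x)$, giving both that $l(g)$ is defined and that $l(g)=l(f)$.

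The only point requiring attention is to notice that hypothesis a) already packages in the assertion that $l(g)$ is defined, which is what lets us invoke the pseudometric example in the first direction; beyond that the argument is a direct triangle-inequality manipulation, so I do not expect a serious technical obstacle.
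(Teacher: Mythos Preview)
Your proof is correct. The direction a)$\Rightarrow$b) is essentially identical to the paper's: both form $\phi_{\dd'}(z)=\dd'(f(z),g(z))$, compute $l(\phi_{\dd'})(x)=\dd'(l(f)(x),l(g)(x))=0$ via the pseudometric example, and then invoke Lemma~\ref{caractb0limit}.

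For b)$\Rightarrow$a) you take a more direct route than the paper. The paper first invokes Lemma~\ref{lemaequil1} to establish that $l(g)$ is defined: given a net $(x_\lambda,y_\lambda)\subset\hx\times\hx$ tending to infinity with $\dd(x_\lambda,y_\lambda)\to 0$ for all $\dd\in\ddd$, it uses a three-term triangle inequality
\[
\dd'(g(x_\lambda),g(y_\lambda))\leq \dd'(g(x_\lambda),f(x_\lambda))+\dd'(f(x_\lambda),f(y_\lambda))+\dd'(f(y_\lambda),g(y_\lambda))
\]
to conclude $l(g)$ exists, and only then shows $l(g)=l(f)$ by again computing $l(\phi_{\dd'})=0$. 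You instead fix $x\in X$, pick any net $z_\lambda\to x$ in $\hx$, and use a two-term triangle inequality against the already-known limit $l(f)(x)$ to show $g(z_\lambda)\to l(f)(x)$ directly, establishing existence and the value of $l(g)(x)$ in one stroke. Your argument is shorter and avoids the appeal to Lemma~\ref{lemaequil1}; the paper's version has the minor advantage that it separates the two conceptual steps (existence of $l(g)$, then its value), which fits the paper's general pattern of reusing Lemma~\ref{lemaequil1} as the standard existence criterion.
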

\begin{proof}  For every $\dd'\in\ddd'$, consider the function $\phi_{\dd'}:\hx\rightarrow\Rset$,
$z\rightarrow\dd'(f(z),g(z))$. Observe that b) is equivalent to say
that, for every $\dd'\in\ddd'$, $\phi_{\dd'}\in B_0(\hx)$, i. e.,
that $l(\phi_{\dd'})=0$.

If $l(g)=l(f)$, then for every $\dd'\in\ddd'$ and every $x\in X$,
$l(\phi_{\dd'})(x)=\dd'(l(f)(x),l(g)(x))=0$. Hence,
$l(\phi_{\dd'})=0$ and we have b).

Suppose b). Let us see first that $l(g)$ is defined, by using Lemma
\ref{lemaequil1}'s characterization. Consider a family of
pseudometrics $\ddd$ which genere $\tx$'s topology. Take
$\{(x_\lambda,y_\lambda)\}\subset\hx\times\hx$ such that
$(x_\lambda,y_\lambda)\rightarrow\infty$ and
$\dd(x_\lambda,y_\lambda)\rightarrow 0$ for every  $\dd\in\ddd$.
Thus, for every $\dd'\in\ddd'$, $\dd'(g(x_\lambda),g(y_\lambda))\leq
\dd'(g(x_\lambda),f(x_\lambda))+\dd'(f(x_\lambda),f(y_\lambda))+\dd'(f(y_\lambda),g(y_\lambda))\rightarrow
0$ and $l(g)$ is defined.

Pick $x\in X$. Since, for every $\dd'\in\ddd'$,
$\dd(l(f)(x),l(g)(x))=l(\phi_{\dd'})(x)=0$, we get $l(g)(x)=l(f)(x)$
and $l(g)=l(f)$.\end{proof}

\subsection{Limit and total operators $L$ and $T$, for maps}

We have defined $l$ and $t$ with the aim of working with proper
functions $\hf:\hx\rightarrow\hxp$, where $\xiv$ and $\xpiv$ are
compactification packs. But there, we need to do a little change in
the definition.

\begin{lema}\label{infinitopropia}Let $\xiv$ and $\xpiv$  be compactification packs and
$\hf:\hx\rightarrow\txp$ a map such that $\hf(\hx)\subset\hxp$ and
$l(\hf)$ is defined. Then, $l(\hf)(X)\subset X'$ if and only if
$\hf:\hx\rightarrow\hxp$ is proper.\end{lema}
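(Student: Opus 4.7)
The equivalence is essentially a net-chasing argument that uses the compactness of $\tx$ and $\txp$ together with the net-theoretic characterization of properness from Lemma \ref{equiinfinv}. I would base the entire proof on the observation that, since $\hx$ is open in $\tx$, a net in $\hx$ converges to a point of $X$ in $\tx$ if and only if it goes to $\infty$ in $\hx$ and (a subnet of it) converges in $\tx$; analogously for $\hxp \subset \txp$.

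For the ``if'' direction (properness implies $l(\hf)(X) \subset X'$), I would fix $x \in X$ and pick any net $\{x_\lambda\} \subset \hx$ with $x_\lambda \to x$ in $\tx$. Since $x \in X$ and $\hx$ is open, $x_\lambda \to \infty$ in $\hx$. By Lemma \ref{equiinfinv} (applied to $\hf:\hx \to \hxp$), properness gives $\hf(x_\lambda) \to \infty$ in $\hxp$. But by hypothesis $\hf(x_\lambda) \to l(\hf)(x)$ in $\txp$. If $l(\hf)(x)$ were a point of $\hxp$, local compactness would furnish a compact neighborhood of it in $\hxp$ in which $\hf(x_\lambda)$ is eventually trapped, contradicting $\hf(x_\lambda) \to \infty$ in $\hxp$. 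Hence $l(\hf)(x) \in X'$.

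For the ``only if'' direction, assume $l(\hf)(X) \subset X'$ and take $\{x_\lambda\} \subset \hx$ with $x_\lambda \to \infty$ in $\hx$. I want to conclude $\hf(x_\lambda) \to \infty$ in $\hxp$, which by Lemma \ref{equiinfinv} proves properness. Suppose this fails; then some subnet $\{\hf(x_{\lambda'})\}$ stays inside a relatively compact $K \subset \hxp$. Using compactness of $\txp$ and then of $\tx$, extract a further subnet $\{x_{\lambda''}\}$ with $\hf(x_{\lambda''}) \to y \in \overline{K} \subset \hxp$ and $x_{\lambda''} \to x \in \tx$. Because $x_\lambda \to \infty$ in $\hx$, the limit $x$ must lie in $X$. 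By the definition of $l(\hf)$, $\hf(x_{\lambda''}) \to l(\hf)(x)$, and by hypothesis $l(\hf)(x) \in X'$. Hausdorffness of $\txp$ forces $y = l(\hf)(x) \in X' \cap \hxp = \varnothing$, a contradiction.

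The only subtle point is carefully separating convergence in $\tx$ from convergence ``to $\infty$'' in $\hx$ (and likewise for the primed spaces) and using the right characterization of properness: everything else is a routine double subnet argument using compactness of the ambient compactifications. No genuine obstacle arises once this bookkeeping is set up.
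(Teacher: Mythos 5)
Your proof is correct and follows essentially the same route as the paper's: both directions are the same net/subnet argument, using that a net in $\hx$ converging to a point of $X$ goes to $\infty$ in $\hx$, the compactness of $\tx$ (and a compact set trapping $\hf(x_{\lambda'})$) to produce a convergent subnet, and the net characterization of properness from Lemma \ref{equiinfinv}. The only difference is presentational (direct/contradiction versus the paper's contrapositive phrasing), so nothing further is needed.
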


\begin{proof}Suppose $l(\hf)(X)\not\subset X'$. Take $x\in X$ with
$l(\hf)(x)\in\hxp$ and $\{z_\lambda\}\subset \hx$ with
$z_\lambda\rightarrow x$. Since $\hf(z_\lambda)\rightarrow
l(\hf)(x)\in\hx$, we have that $z_\lambda\rightarrow\infty$ in
$\hx$, but $\hf(z_\lambda)\not\rightarrow\infty$ in $\hxp$ and we
get that $\hf:\hx\rightarrow\hxp$ is not proper.

Suppose now that $\hf:\hx\rightarrow\hxp$ is not proper. Take
$x_\lambda\rightarrow\infty$ such that
$\hf(x_\lambda)\not\rightarrow \infty$. Then, there exist a compact
subset $K'$ of $\hxp$ and a subnet $\{\hf(x_{\lambda'})\}\subset
K'$. By $\tx$'s compacity, we may take a subnet $x_{\lambda''}$ of
$x_{\lambda'}$ with $x_{\lambda''}\rightarrow x_0\in \tx$. But
$x_0\in X$, because $x_{\lambda''}\rightarrow\infty$ in $\hx$. Then,
$l(\hf)(x_0)=\lim\hf(x_\lambda)\subset K\subset\hx$ and we get
$l(\hf)(X)\not\subset X'$.
\end{proof}

Then, the following definition makes sense:

\begin{deff}[Limit and total operators $L$ and $T$] Let $\xiv$ and $\xpiv$ be compactification packs. Suppose
$\hf:\hx\rightarrow\hxp$ is a proper map (not necessarily
continuous). If it can be defined, the limit function
$L_{\tx\txp}(\hf):X\rightarrow X'$ is the one such that for every
$x\in X$:
$$L_{\tx\txp}(\hf)=\lim_{\substack{z\rightarrow x
\\z\in\hx }}\hf( z)$$.

In this case, the total function
$T_{\tx\txp}(\hf):\tx\rightarrow\txp$ is the one such that for every
$x\in  \tx$:
$$T_{\tx\txp}(\hf)(x)=\left\{\begin{array}{ll}\hf(x)&\textrm{ if $x\in \hx$}\\ L_{\tx\txp}(\hf)(x)&\textrm{ if $x\in
X$}\end{array}\right.$$

when no confusion arise, we denote $L_{\tx\txp}$ and $T_{\tx\txp}$
by $L$ and $T$, respectively.

\end{deff}

\begin{prop}  Let $\hf:\hx\rightarrow\hxp$ be a proper map, where  $\xiv$ and $\xpiv$ are compactification packs.
If $L(\hf)$ is defined, then it is continuous. If, moreover $\hf$ is
continuous, then $T(\hf)$ is.\end{prop}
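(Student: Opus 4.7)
The plan is to reduce everything to Proposition \ref{lhfescont} by viewing $\hf$ as a map into the larger compact Hausdorff space $\txp$ instead of into $\hxp$. The point is that $\txp$, being compact Hausdorff, is completely regular, so Proposition \ref{lhfescont} applies directly, and the operators $L$ and $T$ differ from $l$ and $t$ only in where the image is deemed to land.

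First I would compose $\hf$ with the inclusion $\hxp \hookrightarrow \txp$ and view the result as a map $\hf : \hx \to \txp$. Since $\txp$ is completely regular and $L(\hf)$ being defined means precisely that $l(\hf) : X \to \txp$ is defined, Proposition \ref{lhfescont}(a) yields that $l(\hf) : X \to \txp$ is continuous. Next, Lemma \ref{infinitopropia} applied to the proper map $\hf : \hx \to \hxp$ tells us $l(\hf)(X) \subset X'$, so $L(\hf)$ is literally the co-restriction of $l(\hf)$ to $X'$. Since $X'$ carries the subspace topology from $\txp$, this co-restriction inherits continuity, giving continuity of $L(\hf) : X \to X'$.

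For the second assertion, assume $\hf$ is continuous as a map $\hx \to \hxp$; then it is also continuous as a map $\hx \to \txp$. Proposition \ref{lhfescont}(b) gives that $t(\hf) : \tx \to \txp$ is continuous. But $T(\hf)$ agrees with $t(\hf)$ pointwise by construction (both equal $\hf$ on $\hx$ and $L(\hf) = l(\hf)$ on $X$), and the containment $l(\hf)(X) \subset X' \subset \txp$ from the previous paragraph shows the codomain is consistent. Hence $T(\hf) : \tx \to \txp$ is continuous.

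There isn't really a hard step; everything has been set up in Proposition \ref{lhfescont} and Lemma \ref{infinitopropia}. The only thing to be careful about is the bookkeeping around codomains: $L$ and $T$ are defined with target spaces $X'$ and $\txp$ respectively, while $l$ and $t$ take target in whatever space $\hf$ was originally mapping into. Enlarging the target from $\hxp$ to $\txp$ before applying Proposition \ref{lhfescont}, and then using Lemma \ref{infinitopropia} to shrink it back to $X'$ in the $L$-case, is the only substantive move.
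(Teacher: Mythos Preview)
Your proof is correct and follows the same route as the paper, which simply writes ``It follows from Proposition \ref{lhfescont}.'' You have merely made explicit the codomain bookkeeping (enlarging the target to $\txp$ so that Proposition \ref{lhfescont} applies, then invoking Lemma \ref{infinitopropia} to see that the image of $l(\hf)$ lands in $X'$), which the paper leaves implicit since it was set up immediately before the definition of $L$ and $T$.
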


\begin{proof}If follows from Proposition \ref{lhfescont}.\end{proof}

\begin{prop}\label{prefunctor} Let $\xiv$ and $\xpiv$ be compactification packs and $Y$ a topological space.
Consider a proper map $\hf:\hx\rightarrow\hxp$ a map
$g:\hxp\rightarrow Y$ such that $L(\hf)$ and $l( g)$ are defined.
Then, $l( g\circ\hf)=l( g)\circ L(\hf)$ and $t( g\circ\hf)=t(
g)\circ T(\hf)$.\end{prop}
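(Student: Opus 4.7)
The statement is really two claims, the second of which will follow from the first together with a case split on $\tx = \hx \cup X$. The plan is therefore to prove the limit equality $l(g\circ\hf) = l(g)\circ L(\hf)$ first, by chasing the definition of $l$ through one direct net computation, and then read off the total version from it.

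For the limit equality, I fix $x\in X$ and any net $\{z_\lambda\}\subset\hx$ with $z_\lambda\to x$ in $\tx$. Since $x\in X$ and $\hx$ is open in $\tx$, one has $z_\lambda\to\infty$ in $\hx$. The properness of $\hf:\hx\rightarrow\hxp$ (via Lemma \ref{equiinfinv}) gives $\hf(z_\lambda)\to\infty$ in $\hxp$. Because $L(\hf)$ is defined, the very definition yields $\hf(z_\lambda)\to L(\hf)(x)$ in $\txp$, with $L(\hf)(x)\in X'$ (as is automatic from $\hf(\hx)\subset\hxp$ and $\hf(z_\lambda)\to\infty$ in $\hxp$, consistent with Lemma \ref{infinitopropia}). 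Now $\{\hf(z_\lambda)\}$ is a net in $\hxp$ tending to the point $L(\hf)(x)\in X'$, so the definition of $l(g)$ applies and gives $g(\hf(z_\lambda))\to l(g)(L(\hf)(x))$. Since the limit is independent of the choice of net $\{z_\lambda\}$, this shows that $l(g\circ\hf)(x)$ is defined and equals $l(g)(L(\hf)(x))$.

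For the total equality $t(g\circ\hf) = t(g)\circ T(\hf)$, I split by cases on $x\in\tx$. If $x\in\hx$, then $T(\hf)(x) = \hf(x)\in\hxp$, so $t(g)(T(\hf)(x)) = g(\hf(x)) = (g\circ\hf)(x) = t(g\circ\hf)(x)$, using that on $\hx$ the total operator $t(g\circ\hf)$ agrees with $g\circ\hf$. If $x\in X$, then $T(\hf)(x) = L(\hf)(x)\in X'$, so $t(g)(T(\hf)(x)) = l(g)(L(\hf)(x))$, which by the first part equals $l(g\circ\hf)(x) = t(g\circ\hf)(x)$.

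I do not expect a genuine obstacle here: the argument is a one-net chase, and the only thing to be careful about is using both pieces of information on $\{z_\lambda\}$ simultaneously, namely that it tends to $x\in X$ in $\tx$ and that its image tends to infinity in $\hxp$, so that the hypotheses on $L(\hf)$ and on $l(g)$ can be chained. This is exactly what properness of $\hf$ buys.
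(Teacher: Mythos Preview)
Your proof is correct and follows essentially the same approach as the paper: pick $x\in X$, a net in $\hx$ converging to $x$, use the definition of $L(\hf)$ to push it to a net in $\hxp$ converging to $L(\hf)(x)\in X'$, and then apply the definition of $l(g)$. The paper's version is terser (it does not spell out the case split for $t$ nor the explicit appeal to properness, since $L(\hf)(x)\in X'$ is already built into the definition of $L$), but the argument is the same.
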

\begin{proof} Pick $x\in X$ and $\{x_\lambda\}\subset\hx$ such that
$x_\lambda\rightarrow x$. Then, $\hf(x_\lambda)\rightarrow
L(\hf)(x)$ with $\{\hf(x_\lambda)\}\subset\hxp$ and $L(\hf)(x)\in
X'$. Thus, $ g(\hf(x_\lambda))\rightarrow l( g)\big(L(\hf)(x)\big)$.
Hence, $l( g\circ\hf)=l( g)\circ L(\hf)$ and, consequently, $t(
g\circ\hf)=t( g)\circ T(\hf)$.
\end{proof}

\begin{prop}\label{ltfunctoriales} Operators $L$ and $T$ are functors. That is, if $\xiv$, $\xpiv$
and $\xppiv$ are compactification packs and $\hf:\hx\rightarrow\hxp$
$\hg:\hxp\rightarrow\hxpp$ are maps with $L(\hf)$ and $L(\hg)$
defined, then:
\begin{enumerate}
\item $L(\hf\circ\hg)=L(\hf)\circ L(\hg)$ and $T(\hf\circ\hg)=T(\hf)\circ T(\hg)$.
\item $L(Id_{\hx})=Id_X$ and $T(Id_{\hx})=Id_\tx$
\end{enumerate}\end{prop}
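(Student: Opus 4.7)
The plan is to reduce everything to Proposition \ref{prefunctor}, together with the basic fact that a composition of proper maps is proper. Reading the composition in the order compatible with the domains/codomains (i.e.\ $\hg \circ \hf : \hx \to \hxpp$), the statement for $L$ is: $L(\hg \circ \hf) = L(\hg)\circ L(\hf)$, and similarly for $T$.

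First I would dispose of the identity. Given a compactification pack $\xiv$, the map $Id_{\hx}$ is trivially proper, and for any $x\in X$ and any net $\{z_\lambda\}\subset \hx$ with $z_\lambda\to x$, we have $Id_{\hx}(z_\lambda)=z_\lambda\to x$. Hence $L(Id_{\hx})(x)=x$, so $L(Id_{\hx})=Id_X$, and $T(Id_{\hx})=Id_{\tx}$ follows by definition.

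For the composition, I would first check that $\hg\circ\hf:\hx\to\hxpp$ is proper: if $K\subset\hxpp$ is relatively compact, then $\hg^{-1}(K)$ is relatively compact in $\hxp$ since $\hg$ is proper, and therefore $\hf^{-1}(\hg^{-1}(K))=(\hg\circ\hf)^{-1}(K)$ is relatively compact in $\hx$ since $\hf$ is proper. Then I apply Proposition \ref{prefunctor} to $\hf:\hx\to\hxp$ and to $\hg$ regarded as a map into the topological space $Y=\txpp$: since $L(\hg)$ is defined, $l(\hg)$ is defined (as a map into $\txpp$) and agrees with $L(\hg)$ on $X'$ (and takes values in $X'' \subset \txpp$ there). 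Proposition \ref{prefunctor} gives $l(\hg\circ\hf)=l(\hg)\circ L(\hf)$, and restricting to $X$ yields
\[
L(\hg\circ\hf)(x) \;=\; l(\hg)\bigl(L(\hf)(x)\bigr) \;=\; L(\hg)\bigl(L(\hf)(x)\bigr),
\]
since $L(\hf)(x)\in X'$ by Lemma \ref{infinitopropia}. This is the desired equality for $L$. The corresponding equality for $T$ follows immediately from the definition of $T$: on $\hx$ both sides equal $\hg\circ\hf$, and on $X$ both sides equal $L(\hg\circ\hf)=L(\hg)\circ L(\hf)$, which matches $T(\hg)\circ T(\hf)$ since $T(\hf)$ sends $X$ to $X'$.

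There is no real obstacle; the only subtlety is the bookkeeping between $l/t$ (general limit operators for maps into a topological space) and $L/T$ (their refinement to proper maps between the two "$\hat{}$" spaces of compactification packs). The two invocations of Proposition \ref{prefunctor}, together with Lemma \ref{infinitopropia} to guarantee that $L(\hf)(X)\subset X'$ and hence that the composition $L(\hg)\circ L(\hf)$ is well-defined as a map $X\to X''$, are all that is required.
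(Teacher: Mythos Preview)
Your proof is correct and follows the same route as the paper: invoke Proposition \ref{prefunctor} for the composition and observe the identity case directly. You simply make explicit what the paper leaves implicit (properness of $\hg\circ\hf$, the $l/L$ bookkeeping via Lemma \ref{infinitopropia}, and the correction of the composition order), but the argument is the same.
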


\begin{proof}a) follows from
\ref{prefunctor} and b) is obvious.\end{proof}

\begin{rmk} Let $\xiv$ and $\xpiv$ be compactification packs. Suppose $K$
and $K'$ are compactifications of $\hx$ and $\hxp$ equivalent to
$\tx$ and $\txp$ respectively. Consider a proper map
$\hf:\hx\rightarrow\hxp$. Then, $L_{\tx,\txp}(\hf)$ is defined if
and only if $L_{K,K'}(\hf)$ is defined.

It follows from the fact that, if $h:\tx\rightarrow K$ and
$h':\txp\rightarrow K'$ are homeomorphisms such that $h|_\hx=Id_\hx$
and $h'|_\hxp=Id_\hxp$, then
$T_{K,K'}(\hf)=T_{K,K'}(Id_\hxp\circ\hf\circ Id_\hx)=
T_{\txp,K'}(Id_\hxp)\circ T_{\tx,\txp}(\hf)\circ T_{K,\tx}(\hf)=
h'\circ T_{\tx,\txp}(\hf)\circ h^{-1}$ .
\end{rmk}

\begin{lema}\label{caractdel}Let $\hf:\hx\rightarrow\hxp$ be a proper map, where $\xiv$ and $\xpiv$ are compactification packs .
Suppose $\ddd$ and $\ddd'$ are two families of pseudometrics which
genere the topologies of $\tx$ and $\txp$, respectively. Then,
\begin{enumerate}[a) ]
\item $L(\hf)$ is defined.

\item For every net $\{(x_\lambda,y_\lambda)\}\subset\hx\times\hx$:
if $(x_\lambda,y_\lambda)\rightarrow\infty$ and
$\dd(x_\lambda,y_\lambda)\rightarrow 0$ for every $\dd\in\ddd$, then
 $\dd'( f(x_\lambda), f(y_\lambda))\rightarrow 0$ for every
$\dd'\in\ddd'$.\

\item $\hf^*\big(C(\txp)|_\hxp\big)\subset =B_l(\hx,tx)=C(\tx)|_\hx+B_0(\hx)$.
\end{enumerate}
are equivalent.

\end{lema}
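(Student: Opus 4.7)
The plan is to establish (a)$\Leftrightarrow$(b) by reducing to Lemma \ref{lemaequil1}, then close the loop through (a)$\Rightarrow$(c)$\Rightarrow$(a) using continuous functions on $\txp$ to detect two distinct limit points.

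For (a)$\Leftrightarrow$(b), I would view $\hf$ as a map $\hf:\hx\rightarrow\txp$ into the compact Hausdorff space $\txp$. Then Lemma \ref{lemaequil1} gives that $l(\hf)$ (as a map into $\txp$) is defined if and only if condition (b) holds. Since $\hf$ is proper by hypothesis, Lemma \ref{infinitopropia} guarantees that whenever $l(\hf)$ is defined, automatically $l(\hf)(X)\subset X'$, which is exactly what is needed for $L(\hf)$ to be defined in the sense of the definition. So $L(\hf)$ defined $\Leftrightarrow$ $l(\hf)$ defined (into $\txp$) $\Leftrightarrow$ (b).

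For (a)$\Rightarrow$(c), assume $L(\hf)$ is defined, so by the preceding proposition $T(\hf):\tx\rightarrow\txp$ is well defined. For any $g\in C(\txp)$, note that $g|_\hxp\circ\hf = g\circ T(\hf)|_\hx$; by Proposition \ref{prefunctor} applied with $Y=\Rset$ (using $l(g|_\hxp)=g|_{X'}$, which is defined since $g$ is continuous on $\txp$), we get $l(g|_\hxp\circ\hf)=g|_{X'}\circ L(\hf)$, which is well defined. Hence $\hf^*(C(\txp)|_\hxp)\subset B_l(\hx,\tx)$, and the equality $B_l(\hx,\tx)=C(\tx)|_\hx+B_0(\hx)$ is just Lemma \ref{algebrabl}.

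The main work is (c)$\Rightarrow$(a), and this is where I expect the core argument to live. Fix $x\in X$ and any net $\{x_\lambda\}\subset\hx$ with $x_\lambda\rightarrow x$. By compactness of $\txp$, $\{\hf(x_\lambda)\}$ has convergent subnets; by Lemma \ref{caractconvcompto} it suffices to show all subnet limits coincide. Suppose $\hf(x_{\lambda'})\rightarrow z_1$ and $\hf(x_{\lambda''})\rightarrow z_2$ with $z_1\neq z_2$. Since $\txp$ is compact Hausdorff (hence completely regular), there is $g\in C(\txp)$ with $g(z_1)\neq g(z_2)$. By hypothesis (c), $g|_\hxp\circ\hf\in B_l(\hx,\tx)$, so $l(g|_\hxp\circ\hf)(x)$ exists and must equal both $\lim g(\hf(x_{\lambda'}))=g(z_1)$ and $\lim g(\hf(x_{\lambda''}))=g(z_2)$, a contradiction. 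Thus all subnet limits agree at a common $z\in\txp$, and $\hf(x_\lambda)\rightarrow z$; the same value $z$ must be obtained for any net converging to $x$ (by considering the combined net as in the proof of Lemma \ref{lemaequil1}). Finally, since $\hf$ is proper and $x\in X$ means $x_\lambda\rightarrow\infty$ in $\hx$, we have $\hf(x_\lambda)\rightarrow\infty$ in $\hxp$, forcing $z\in X'$. Therefore $L(\hf)(x)=z$ is well defined, proving (a).

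The step I expect to be the main obstacle is (c)$\Rightarrow$(a): it is not purely formal, because one has to convert the algebraic information about $\hf^*$ into convergence of $\hf$ itself. The trick is complete regularity of $\txp$ paired with the subnet characterisation of Lemma \ref{caractconvcompto}, which lets a single separating function in $C(\txp)$ rule out two distinct cluster points.
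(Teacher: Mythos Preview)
Your proof is correct, and the equivalence (a)$\Leftrightarrow$(b) via Lemma \ref{lemaequil1} (plus Lemma \ref{infinitopropia} to land in $X'$) matches the paper exactly.

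The difference lies in how you close the loop through (c). The paper proves (a)$\Rightarrow$(c) by invoking Tietze to extend $h|_{X'}\circ L(\hf)$ to some $g\in C(\tx)$ and then checking $h|_\hxp\circ\hf-g|_\hx\in B_0(\hx)$ explicitly; you instead just observe that $l(g|_\hxp\circ\hf)=g|_{X'}\circ L(\hf)$ is defined (Proposition \ref{prefunctor}) and invoke Lemma \ref{algebrabl} for the decomposition, which is cleaner. For the return direction, the paper proves (c)$\Rightarrow$(b) directly: working with the special families $\{\dd_g:g\in C(\tx)\}$ and $\{\dd_h:h\in C(\txp)\}$, it writes $h|_\hxp\circ\hf=g|_\hx+r$ with $r\in B_0(\hx)$ and computes $\dd_h(\hf(x_\lambda),\hf(y_\lambda))\leq \dd_g(x_\lambda,y_\lambda)+|r(x_\lambda)|+|r(y_\lambda)|\to 0$. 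You instead prove (c)$\Rightarrow$(a) by a separating-function contradiction: two distinct cluster points of $\hf(x_\lambda)$ would be distinguished by some $g\in C(\txp)$, contradicting that $l(g|_\hxp\circ\hf)(x)$ is a single number. Your route is more conceptual and avoids the auxiliary choice of pseudometric families (and the implicit appeal back to (a)$\Leftrightarrow$(b) to pass from the special families to general ones); the paper's route is more computational but gives an explicit estimate. Both are fine.
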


\begin{proof}
The equivalence between a) and b) follows from Lemma
\ref{lemaequil1}.

Let us see that a) implies c). Pick $h\in C(\txp)$. Since
$h|_{X'}\circ L(\hf):X\rightarrow\Rset$ is continuous, Tietze
extension theorem shows that there is a continuous extension
$g:\tx\rightarrow \Rset$. Hence,
 $l(h|_\hxp\circ\hf-g|_\hx)=l(h|_\hxp)\circ
L(\hf)-l(g|_\hx)=h|_{X'}\circ L(\hf)-g|_X=0$, thus
$h|_\hxp\circ\hf-g|_\hx\in B_0(\hx)$. Therefore,
$\hf^*(h|_\hxp)=h|_\hxp\circ\hf=g|_\hx+\big(h|_\hxp\circ\hf-g|_\hx\big)\in
C(\tx)|_\hx + B_0(\hx)$.

Let us see that c) implies b). Consider on $\tx$ and $\txp$ the
families of pseudometrics $\{\dd_g:g\in C(\tx)\}$ and $\{\dd_h:h\in
C(\txp)\}$ respectively. Choose a net
$\{(x_\lambda,y_\lambda)\}\subset\hx\times\hx$ such that
$(x_\lambda,y_\lambda)\rightarrow\infty$ and
$\dd_g(x_\lambda,y_\lambda)\rightarrow 0$ for every $g\in C(\tx)$.

Take $h\in C(\txp)$. Then, $h|_\hxp\circ
\hf=\hf^*(h|_\hxp)=g|_\hx+r$, with $g\in C(\tx)$ and $r\in
B_0(\hx)$. Thus:
$$\dd_h(\hf(x_\lambda),\hf(y_\lambda))=|h(\hf(x_\lambda))-h(\hf(y_\lambda))|=|h\circ\hf(x_\lambda)-h\circ
\hf(y_\lambda)|=$$
$$|g(x_\lambda)+r(x_\lambda)-g(y_\lambda)-r(y_\lambda)|\leq
|g(x_\lambda)-g(y_\lambda)|+|r(x_\lambda)|+|r(y_\lambda)|=$$
$$\dd_g(x_\lambda,y_\lambda)+|r(x_\lambda)|+|r(y_\lambda)|\rightarrow 0$$
\end{proof}

\begin{lema}\label{caraclfiguallg} Let $\xiv$ and $\xpiv$ be compactification packs and $\hf,\hg:\hx\rightarrow\hxp$ proper maps
such that $L(\hf)$ is defined. Suppose $\ddd'$ is a family of
pseudometrics of $\txp$ which generes its topology. Then,
\begin{enumerate}[a) ]
\item $L(\hg)=L(\hf)$.
\item For every $\dd'\in\ddd'$,
$\lim_{z\rightarrow\infty}\dd'(\hf(z),\hg(z))=0$.
\item $(\hf^*-\hg^*)(C(\txp)|_\hxp)\subset B_0(\hx)$.
\end{enumerate}are equivalent.\end{lema}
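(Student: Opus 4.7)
The three statements all live inside the framework already set up by Lemmas \ref{lemaequil2}, \ref{caractb0limit} and Proposition \ref{prefunctor}, so the strategy is to route everything through the limit operator $l$ applied to bounded real-valued maps and through the characterization of $B_0(\hx)$ as the kernel of $l$. First I would dispose of the equivalence (a)$\Leftrightarrow$(b): since $\txp$ is compact Hausdorff and $\ddd'$ generates its topology, I can view $\hf$ and $\hg$ as maps $\hx\rightarrow\txp$; because $L(\hf)$ is defined (and agrees with $l(\hf)$ thought of as a map into $\txp$), Lemma \ref{lemaequil2} applies verbatim and yields the equivalence.

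For (a)$\Rightarrow$(c): fix $h\in C(\txp)$. Then $h|_\hxp$ is bounded and continuous, so $\hf^*(h|_\hxp)=h|_\hxp\circ\hf$ and $\hg^*(h|_\hxp)=h|_\hxp\circ\hg$ are bounded maps $\hx\rightarrow\Rset$. By Proposition \ref{prefunctor},
\[
l\big(h|_\hxp\circ\hf\big)=h|_{X'}\circ L(\hf),\qquad l\big(h|_\hxp\circ\hg\big)=h|_{X'}\circ L(\hg),
\]
and the algebra property of $l$ (Example \ref{ejlsuma}) gives $l\big(\hf^*(h|_\hxp)-\hg^*(h|_\hxp)\big)=h|_{X'}\circ L(\hf)-h|_{X'}\circ L(\hg)$. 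Assuming (a), this vanishes, and Lemma \ref{caractb0limit} (applied to the bounded function $\hf^*(h|_\hxp)-\hg^*(h|_\hxp)$) forces membership in $B_0(\hx)$.

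For (c)$\Rightarrow$(a) I would not try to work with an arbitrary family $\ddd'$ directly; instead I use the canonical generating family $\ddd'_0=\{\dd_h:h\in C(\txp)\}$, which also generates the topology of $\txp$. For each $h\in C(\txp)$, condition (c) gives $\hf^*(h|_\hxp)-\hg^*(h|_\hxp)\in B_0(\hx)$, and since $\dd_h(\hf(z),\hg(z))=|\hf^*(h|_\hxp)(z)-\hg^*(h|_\hxp)(z)|$, this means $\lim_{z\rightarrow\infty}\dd_h(\hf(z),\hg(z))=0$. Thus (b) holds for $\ddd'_0$, and the already established (a)$\Leftrightarrow$(b) direction of Lemma \ref{lemaequil2} (with $\ddd'_0$) yields (a).

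The mild obstacle is just bookkeeping: condition (b) is phrased for an \emph{arbitrary} generating family $\ddd'$, so one must be careful not to conflate different choices when closing the cycle. Once (a) has been derived from (c) via the auxiliary family $\ddd'_0$, the equivalence (a)$\Leftrightarrow$(b) for the original $\ddd'$ is recovered by a second appeal to Lemma \ref{lemaequil2}, closing the circle.
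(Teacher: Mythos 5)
Your proposal is correct and follows essentially the same route as the paper: (a)$\Leftrightarrow$(b) via Lemma \ref{lemaequil2}, (a)$\Rightarrow$(c) by applying $l$ to $(\hf^*-\hg^*)(h|_\hxp)$ and invoking Lemma \ref{caractb0limit}, and (c) back to (a)/(b) through the canonical family $\{\dd_h:h\in C(\txp)\}$. Your explicit remark about not conflating the arbitrary family $\ddd'$ with the auxiliary one is a small tidiness improvement over the paper's wording of ``c) implies b)'', but the underlying argument is identical.
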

\begin{proof}
The equivalence between a) and b) is due to Lemma \ref{lemaequil2}.

Let us see that a) implies c). Take $h\in C(\txp)$. Then,
$l\big((\hf^*-\hg^*)(h|_\hxp)\big)=l\big(h|_\hxp\circ
\hf-h|_\hxp\circ \hg\big)=l(h|_\hxp)\circ L(\hf)-l(h|_\hxp)\circ
L(\hg)=0$. Thus, $(\hf^*-\hg^*)(h|_\hxp)\in B_0(\hx)$.

Let us see that c) implies b). Consider the family of pseudometrics
of $\txp$ $\{\dd_h:h\in C(\txp)\}$. Take $h\in C(\txp)$. Then
$h|_\hxp\circ \hf-h|_\hxp\circ \hg=(\hf^*-\hg^*)(h|_\hxp)\in
B_0(\hx)$. Therefore,
$\lim_{z\rightarrow\infty}\big(h(\hf(z))-h(\hg(z))\big)=0$, that is,
$\lim_{z\rightarrow\infty}\dd_h(\hf(z),\hg(z))=0$.\end{proof}

\section{Functors between coarse structures and compactifications}\label{section4}

Given a locally compact space $\hx$, $E\subset\hx\times\hx$ and a
map $\phi:\hx\times\hx\rightarrow\Rset$, everywhere we use the
expression $\lim_{\substack{(x,y)\rightarrow\infty\\(x,y)\in
E}}\phi(x,y)=a$, we mean the limit of $\phi(x,y)$ when
$(x,y)\rightarrow\infty$ in $\hx\times\hx$ and $(x,y)\in E$. That
is, for every neighborhood $U$ of $a$ in $\Rset$, there exist a
compact subset $K$ of $\hx$  such that $\phi(x,y)\in U$ for every
$(x,y)\in E\bs K\times K$.

Note that, if $E$ is relatively compact in $\hx\times\hx$, according
with the definition above, then
$\lim_{\substack{(x,y)\rightarrow\infty\\(x,y)\in E}}\phi(x,y)$ can
be any $a\in \Rset$.

Observe that $\lim_{\substack{(x,y)\rightarrow\infty\\(x,y)\in
E}}\phi(x,y)=a$ if and only if for every net
$\{(x_\lambda,y_\lambda)\}\subset E$ with
$(x_\lambda,y_\lambda)\rightarrow\infty$ we have that
$\phi(x_\lambda,y_\lambda)\rightarrow a$.

\subsection{The $\ceo$ coarse structure. A functor between compactifications and coarse
structures.}\label{section41}

Recall the following definition, see \cite{cg,cg3} and \cite{jpmd}:

\begin{deff}\label{propcoarsecompacto} Let $\xid$ be a compactification pack. The
topological coarse structure $\ce$ over $\hx$ attached to the
compactification $\tx$ is the collection of all $E\subset \hx\times
\hx$ satisfying any of the following equivalent properties:
\begin{enumerate}[a) ]
\item $Cl_{\tx\times\tx}E$ meets $\tx\times\tx\bs\hx\times\hx$ only
in the diagonal of $X\times X$.

\item $E$ is proper and for every net $(x_\lambda,y_\lambda)\subset
E$, if $x_\lambda$ converges to a point $x$ of $X$, then $y_\lambda$
converges also to $x$.

\item $E$ is proper and for every point $x\in X$ and every
neighborhood $V_x$ of $x$ in $\tx$ there exists a neighborhood $W_x$
of $x$ in $\tx$ such that $E(W_x)\subset V_x$.

\item For every net $(x_\lambda,y_\lambda)\subset
E\cup E^{-1}$, if $x_\lambda$ converges to a point $x$ of $X$, then
$y_\lambda$ converges also to $x$.

\item For every point $x\in X$ and every
neighborhood $V_x$ of $x$ in $\tx$ there exists a neighborhood $W_x$
of $x$ in $\tx$ such that $(E\cup E^{-1})(W_x)\subset V_x$.\end{enumerate}\end{deff}

\begin{rmk}
This coarse structure is preproper. Moreover, if $\tx$ is
metrizable, then $\ce$ is proper.\end{rmk}

\begin{rmk} The definition above is Definition 2.28
of \cite{cg}. The equivalences of a)-e) are given in Proposition
2.27 of \cite{cg} (pags. 26-27), together with the the author's
correction in \cite{cg3} and in Remark 6 and Proposition 7 of
\cite{jpmd}.
\end{rmk}

\begin{rmk}
Properties b) and d) can be rewritten in the language of filters:
\begin{enumerate}[a')]
\setcounter{enumi}{1}
\item Consider the proyections $\pi_i:\tx\times\tx\rightarrow\tx$. $E$ is proper and for every filter
$\mathfrak F$ in $\hx\times\hx$ such that $E\in\mathfrak F$ and
$\pi_1(\mathfrak F)\rightarrow x\in X$ we have that $\pi_2(\mathfrak
F)\rightarrow x$. \setcounter{enumi}{3}
\item Consider the projections $\pi_i:\tx\times\tx\rightarrow\hx$. For very filter
$\mathfrak F$ in $\hx\times\hx$ such that $E\cup E^{-1}\in\mathfrak
F$ and $\pi_1(\mathfrak F)\rightarrow x\in X$ we have that
$\pi_2(\mathfrak F)\rightarrow x$.\end{enumerate}
\end{rmk}

Recall the following definition of Wright in \cite{wright} or
\cite{wright3} (see also of Example 2.6 of \cite{cg}, page 22):

\begin{deff} Let $(\hx,\dd)$ be a metric space. The $C_0$ coarse
structure, denoted by $\ceo(\dd)$ or by $\ceo$ when no confusion
arise, is the collection of all subsets $E\subset\hx\times\hx$ such
that for every $\epsil>0$ there exists a compact subset $K$ of $\hx$
such that $\dd(x,y)<\epsil$ whenever $(x,y)\in E\bs K\times
K$.\end{deff}

In \cite{zs} (Proposition 6, pg. 5237) it is proved that if $\xiv$
is a metrizable compactification pack and $\dd$ is a metric of $\tx$
restricted to $\hx$, then the topological coarse structure attached
to $\hx$ and $\ceo(\dd)$ are equal (actually, this proposition is
not expressed in that terms, but the generalization is trivial).
Taking families of pseudometrics we can generalize this result to
all the compactification packs.

\begin{prop}\label{ceodddec}Let $X$ be a locally compact Hausdorff space and let
$\ddd$ a family of pseudometrics of $X$ which generate its topology.
Denote by $\ceo(\ddd)$ the family of all $E\subset X\times X$ such
that: \begin{enumerate}[a) ]
\item $E$ is proper.
\item  For every $\dd\in\ddd$ and any
$\epsil>0$ there exists a compact subset $K$ of $X$ such that
$\dd(x,y)<\epsil$ for every $(x,y)\in E\bs K\times K$ (equivalently,
$\forall \dd\in\ddd,
\lim_{\substack{(x,y)\rightarrow\infty\\(x,y)\in E}}\dd(x,y)=0$).
\end{enumerate}
Then, $\ceo(\ddd)$ is a preproper coarse structure. Moreover, if
$\ddd$ generates $X$'s topology, then property a) follows from
property b).
\end{prop}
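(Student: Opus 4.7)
The plan is to verify the five coarse structure axioms, then the two preproperness conditions, then finally derive property a) from property b) under the generation hypothesis. Most verifications are routine; the two substantive steps are closure under products and the final implication, both of which rest on the same underlying fact about proper relations.

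I would first handle the easy axioms: the diagonal trivially satisfies both a) and b); closure under subsets is immediate from the form of the conditions; closure under inverses follows because properness is symmetric (the definition of proper is stated symmetrically for $E$ and $E^{-1}$) and pseudometrics are symmetric; closure under finite unions follows from the facts that a finite union of relatively compact sets is relatively compact and that one may take the maximum of the compact sets provided by b) for each summand. The main coarse structure obstacle is closure under products. Given $E,F\in\ceo(\ddd)$, properness of $E\circ F$ is easy using $(E\circ F)(K)=E(F^{-1}(K))$ and $(E\circ F)^{-1}(K)=F^{-1}(E^{-1}(K))$. For condition b), given a net $(x_\lambda,z_\lambda)\in E\circ F$ with $(x_\lambda,z_\lambda)\to\infty$, choose intermediate $y_\lambda$ with $(x_\lambda,y_\lambda)\in E$, $(y_\lambda,z_\lambda)\in F$; by Remark \ref{obsepropioinfty} some coordinate goes to infinity, and then Proposition \ref{epropiolimites} applied to $E$ and $F$ propagates this through, so that in fact $x_\lambda,y_\lambda,z_\lambda$ all go to infinity. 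Hence $(x_\lambda,y_\lambda),(y_\lambda,z_\lambda)\to\infty$, both distances tend to $0$ by hypothesis, and the triangle inequality closes the argument.

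For preproperness, the condition that controlled sets are proper is built into property a). For the condition that every relatively compact $B\subset X$ is bounded, I would check that $B\times B\in\ceo(\ddd)$: since $B\times B$ is contained in the compact set $\overline B\times\overline B$, it is relatively compact, hence proper (so a) holds), and b) holds vacuously by taking $K=\overline B$, so that $B\times B\setminus K\times K=\varnothing$.

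The last assertion is where the topology-generation hypothesis really enters. Assume $\ddd$ generates the topology and $E$ satisfies b); I would argue by contradiction that $E(K)$ is relatively compact for every relatively compact $K$. If not, there exists a net $x_\lambda\in E(K)$ with $x_\lambda\to\infty$ in $X$; pick $y_\lambda\in K$ with $(x_\lambda,y_\lambda)\in E$. Since $K$ is relatively compact, pass to a subnet with $y_\lambda\to y\in X$. Then $(x_\lambda,y_\lambda)\to\infty$, so b) yields $\dd(x_\lambda,y_\lambda)\to 0$ for every $\dd\in\ddd$; combined with $\dd(y_\lambda,y)\to 0$ and the triangle inequality, $\dd(x_\lambda,y)\to 0$ for every $\dd\in\ddd$. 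Because $\ddd$ generates the topology of $X$, this forces $x_\lambda\to y\in X$, contradicting $x_\lambda\to\infty$. The same argument applied to $E^{-1}$ (whose role in b) is symmetric in the two coordinates) shows $E^{-1}(K)$ is relatively compact, so $E$ is proper. The hardest step to get right in this whole plan is the propagation of ``going to infinity'' across a composition, but this is essentially immediate once one invokes Proposition \ref{epropiolimites} on both factors.
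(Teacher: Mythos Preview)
Your argument is correct and follows the paper's outline closely. The one notable variation is in the closure under products: the paper gives a direct $\varepsilon$--$K$ argument, constructing the explicit compact set $K=K_1\cup K_2\cup\overline{E(K_1)}\cup\overline{F^{-1}(K_2)}$ from $\varepsilon/2$-witnesses $K_1,K_2$ for $E$ and $F$ and checking that any $(x,z)\in E\circ F$ outside $K\times K$ forces both $(x,y)\notin K_1\times K_1$ and $(y,z)\notin K_2\times K_2$, whereas you use the net formulation and propagate ``going to infinity'' through the intermediate point via properness (one small refinement: once you have $E\circ F$ proper, invoke the \emph{second} half of Remark~\ref{obsepropioinfty} to get $x_\lambda,z_\lambda\to\infty$ directly, then Proposition~\ref{epropiolimites} on $E$ yields $y_\lambda\to\infty$). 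For the final implication the paper cites Lemma~\ref{xinftysiiyinfty}; your contradiction argument is essentially that lemma's proof unpacked in place, which has the mild advantage of not relying on a statement formulated for compactification packs.
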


\begin{rmk} Then, if $\ddd$ generes $\hx$'s topology, to see
that $E\in\ceo(\ddd)$ we just have to check property b).\end{rmk}

\begin{proof}[Proof of Proposition \ref{ceodddec}]

Let us see that $\ceo(\ddd)$ is a coarse structure. If follows
easily from the definition that $\ceo(\ddd)$ contains the diagonal
and is closed under the formation of inverses and subsets. Let us
see that it is also closed under finite unions and products.

Pick $E,F\in\ceo(\ddd)$. Let us see that $E\cup F\in\ceo(\ddd)$. Let
$\dd\in\ddd$ and $\epsil>0$. Take two compact sets $K_1$ and $K_2$
such that $\dd(x,y)<\epsil$ whenever $(x,y)\in E\bs K_1\times K_1$
and $\dd(x',y')<\epsil$ whenever $(x',y')\in F\bs K_2\times K_2$.
Let $K=K_1\cup K_2$ and let $(x,y)\in (E\cup F)\bs K\times K\subset
(E\bs K_1\times K_1)\cup (F\bs K_2\times K_2)$. Clearly,
$\dd(x,y)<\epsil$. Therefore, $E\cup F\in\ceo(\ddd)$.

Let us see that $E\circ F\in\ceo(\ddd)$. Let $\dd\in\ddd$ and let
$\epsil>0$. Take two compact sets $K_1$ and $K_2$ such that
$\dd(x,y)<\frac{\epsil}{2}$ whenever $(x,y)\in E\bs K_1\times K_1$
and $\dd(x',y')<\frac{\epsil}{2}$ whenever $(x',y')\in F\bs
K_2\times K_2$.

Let $K=K_1\cup K_2\cup \overline{E(K_1)}\cup
\overline{F^{-1}(K_2)}$. Since $E,F$ are proper, $K$ is compact.
Pick $(x,z)\in E\circ F\bs K\times K$. Let $y$ be such that
$(x,y)\in E$ and $(y,z)\in F$. Let us show that
\begin{equation}\label{jasd4fjk3l}
(x,y)\in E\bs K_1\times K_1\textrm{ and }(y,z)\in F\bs K_2\times K_2
\end{equation}
Observe that or $x\not\in K$ either $z\not \in K$. If $x\not \in
K\supset K_1\cup E(K_2)$ then, $x\not\in K_1$ and $y\not\in K_2$ and
we get (\ref{jasd4fjk3l}). If $z\not\in K\supset K_2\cup
F^{-1}(K_1)$ then, $z\not\in K_2$ and $y\not\in K_1$ and we get
(\ref{jasd4fjk3l}). Hence:
$$\dd(x,z)\leq\dd(x,y)+\dd(y,z)<\frac{\epsil}{2}+\frac{\epsil}{2}=\epsil$$
and, finally, $E\circ F\in\ceo(\ddd)$.

By definition, each $E\in\ceo(\ddd)$ is proper. If $K$ is a
relatively compact set, then, clearly, $K\times K\in\ceo(\ddd)$ and
$K$ is bounded in $\ceo(\ddd)$. Thus, $\ceo(\ddd)$ is preproper.

Suppose now that $\ddd$ generates $X$'s topology and $E\subset
X\times X$ satisfies b). Let us prove that $E$ is proper, by using
Proposition \ref{epropiolimites}'s characterization. Let
$(x_\lambda,y_\lambda)\subset E$. Suppose that
$x_\lambda\rightarrow\infty$. Then
$(x_\lambda,y_\lambda)\rightarrow\infty$ and, by b),
$\dd(x_\lambda,y_\lambda)\rightarrow 0$ for every $\dd\in\ddd$.
Then, by Proposition \ref{xinftysiiyinfty},
$y_\lambda\rightarrow\infty$. By symmetry,
$y_\lambda\rightarrow\infty$ implies $x_\lambda\rightarrow\infty$.
\end{proof}

Now, we can generalize the $C_0$ coarse structure:

\begin{deff} Let $\hx$ be a locally compact space and let
$\ddd $ a family of pseudometrics of $\hx$. The $C_0$ coarse
structure of $\hx$ attached to $\ddd$, denoted by $\ceo(\hx,\ddd)$,
or by $\ceo(\ddd)$ when no confusion can arise is the one defined in
Proposition \ref{ceodddec}.\end{deff}

\begin{rmk}
If $\xiv$ is a compactification pack and $\ddd$ a family of
pseudometrics of $\tx$, by $(\hx,\ceo(\ddd))$ we mean
$(\hx,\ceo(\ddd|_\hx))$.
\end{rmk}
\begin{rmk}
If  $\{\ddd_i\}$ is a set of families of pseudometrics of $X$ then,
$\ceo\left(\bigcup_{i\in I}\ddd_i\right)=\bigcap_{i\in
I}\ceo(\ddd_i)$.
\end{rmk}

\begin{lema}\label{propioc0}Let $\hx$ be a locally compact space. Consider $\ddd=\{\dd_f:f\in C_0(\hx)\}$
and suppose $E\subset\hx\times\hx$. Then, $E$ is proper if and only
if $E\in\ceo(\ddd)$.\end{lema}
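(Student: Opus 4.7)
The backward direction is immediate: by the very definition of $\ceo(\ddd)$, any $E\in\ceo(\ddd)$ is already proper. So all the content lies in proving that if $E$ is proper, then $E$ satisfies condition b) of Proposition \ref{ceodddec} with respect to every $\dd_f$, $f\in C_0(\hx)$.

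The plan for the forward direction is the classical ``fatten the compact set'' trick. Fix $f\in C_0(\hx)$ and $\epsil>0$. Since $f$ vanishes at infinity, choose a compact $K_0\subset\hx$ with $|f(z)|<\epsil/2$ for every $z\in\hx\bs K_0$. Now set
$$K=K_0\cup\overline{E(K_0)}\cup\overline{E^{-1}(K_0)}.$$
This is compact because $E$ is proper, so $E(K_0)$ and $E^{-1}(K_0)$ are relatively compact.

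The key verification is that any $(x,y)\in E\bs K\times K$ forces both $x$ and $y$ to lie outside $K_0$. Indeed, $(x,y)\notin K\times K$ means $x\notin K$ or $y\notin K$. If $x\notin K$, then in particular $x\notin E^{-1}(K_0)$, which together with $(x,y)\in E$ forces $y\notin K_0$; and of course $x\notin K_0$. Symmetrically, if $y\notin K$, then $y\notin E(K_0)$ forces $x\notin K_0$, and $y\notin K_0$. In either case, $|f(x)|<\epsil/2$ and $|f(y)|<\epsil/2$, so $\dd_f(x,y)\leq|f(x)|+|f(y)|<\epsil$, which is exactly property b) for this $\dd_f$.

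There is no real obstacle here; the only point to be careful about is that one cannot use just $K_0$ itself, because $(x,y)\in E\bs K_0\times K_0$ only guarantees one coordinate outside $K_0$, and property b) needs control of $f$ at both coordinates. Enlarging $K_0$ by the $E$- and $E^{-1}$-images of $K_0$ (which is where properness of $E$ is used in an essential way) resolves this. An alternative but essentially equivalent formulation would be to invoke Remark \ref{obsepropioinfty}: for a proper $E$ and a net $\{(x_\lambda,y_\lambda)\}\subset E$ going to infinity in $\hx\times\hx$, both $x_\lambda\to\infty$ and $y_\lambda\to\infty$, so $f(x_\lambda)\to 0$ and $f(y_\lambda)\to 0$, hence $\dd_f(x_\lambda,y_\lambda)\to 0$.
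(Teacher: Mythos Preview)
Your proposal is correct. In fact, the ``alternative but essentially equivalent formulation'' you mention at the very end---invoking Remark \ref{obsepropioinfty} so that for a net $\{(x_\lambda,y_\lambda)\}\subset E$ with $(x_\lambda,y_\lambda)\to\infty$ one gets both $x_\lambda\to\infty$ and $y_\lambda\to\infty$, hence $f(x_\lambda),f(y_\lambda)\to 0$ and $\dd_f(x_\lambda,y_\lambda)\to 0$---is \emph{exactly} the paper's proof of the forward direction. Your primary argument via the enlarged compact $K=K_0\cup\overline{E(K_0)}\cup\overline{E^{-1}(K_0)}$ is simply the explicit $\epsil$--$K$ unpacking of that same idea; it buys a self-contained argument that does not pass through nets, at the cost of a little more bookkeeping. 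One small point: with the paper's convention $E(K)=\{y:\exists x\in K,\ (y,x)\in E\}$, the roles of $E(K_0)$ and $E^{-1}(K_0)$ in your two cases are interchanged (e.g.\ $(x,y)\in E$ with $y\in K_0$ places $x$ in $E(K_0)$, not $E^{-1}(K_0)$); since your $K$ contains both closures this is harmless, but you should align the labels with the convention in use.
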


\begin{proof} If $E\in\ceo(\ddd)$, clearly $E$ is proper (it follows
from property b) of Proposition \ref{ceodddec}, because $\ddd$
generates $\hx$'s topology).

Suppose now $E$ is proper and choose $f\in C_0(\hx)$. Take
$\{(x_\lambda,y_\lambda)\}\subset E$ with
$(x_\lambda,y_\lambda)\rightarrow\infty$. Since Remark
\ref{obsepropioinfty} shows that $x_\lambda\rightarrow\infty$ and
$y_\lambda\rightarrow\infty$, we have that
$\lim_{\substack{(x,y)\rightarrow\infty\\(x,y)\in E}}\dd_f(x,y)=
\lim_{\substack{x\rightarrow\infty\\y\rightarrow\infty\\(x,y)\in
E}}|f(x)-f(y)|= |0-0|=0$. Then, $E\in\ceo(\ddd)$.\end{proof}

The following Proposition generalizes Proposition 6, pg. 5237:

\begin{prop}\label{coarsecomptc0gen} Let $\xiv$ be a compactification pack
and $\ddd $ a family of pseudometrics which genere $\tx$'s topology.
Then, the topological coarse structure over $\hx$ attached to the
compactification $\tx$ is the $C_0$ coarse structure over $\hx$
attached to $\ddd|_\hx$.
\end{prop}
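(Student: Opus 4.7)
The plan is to prove the two inclusions separately, using characterization b) of the topological coarse structure from Definition \ref{propcoarsecompacto} on one side and the characterization of $\ceo(\ddd|_\hx)$ from Proposition \ref{ceodddec} on the other, exploiting at every step that each $\dd \in \ddd$, as a pseudometric of $\tx$, extends continuously to $\tx \times \tx$.

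For the inclusion $\ce \subset \ceo(\ddd|_\hx)$, take $E \in \ce$. Properness holds directly from property b) of Definition \ref{propcoarsecompacto}, so only condition b) of Proposition \ref{ceodddec} needs checking. I would argue by contradiction: assume that there exist $\dd \in \ddd$, $\epsilon > 0$, and a net $\{(x_\lambda, y_\lambda)\} \subset E$ with $(x_\lambda, y_\lambda) \to \infty$ in $\hx \times \hx$ and $\dd(x_\lambda, y_\lambda) \geq \epsilon$ for all $\lambda$. Since $E$ is proper, Remark \ref{obsepropioinfty} gives $x_\lambda \to \infty$ and $y_\lambda \to \infty$ in $\hx$. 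By compactness of $\tx$ pass to a subnet so that $x_{\lambda'} \to x$ and $y_{\lambda'} \to y$ in $\tx$; then necessarily $x,y \in X$. Characterization b) of $\ce$ applied to $\{(x_{\lambda'}, y_{\lambda'})\}$ forces $y = x$. Continuity of $\dd$ on $\tx \times \tx$ then yields $\dd(x_{\lambda'}, y_{\lambda'}) \to \dd(x,x) = 0$, contradicting $\dd(x_{\lambda'}, y_{\lambda'}) \geq \epsilon$.

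For the reverse inclusion $\ceo(\ddd|_\hx) \subset \ce$, take $E \in \ceo(\ddd|_\hx)$, which is proper by definition. To verify property b) of Definition \ref{propcoarsecompacto}, let $\{(x_\lambda, y_\lambda)\} \subset E$ with $x_\lambda \to x \in X$. Because $\ddd$ generates the topology of the Hausdorff space $\tx$, in order to conclude $y_\lambda \to x$ it is enough to show $\dd(y_\lambda, x) \to 0$ for every $\dd \in \ddd$. The triangle inequality gives
\[
\dd(y_\lambda, x) \leq \dd(y_\lambda, x_\lambda) + \dd(x_\lambda, x),
\]
the second summand tends to $0$ by continuity of $\dd$. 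For the first summand, $x_\lambda \to x \in X$ implies $x_\lambda \to \infty$ in $\hx$, hence by Remark \ref{obsepropioinfty} also $(x_\lambda, y_\lambda) \to \infty$ in $\hx \times \hx$, and the defining property b) of $\ceo(\ddd|_\hx)$ yields $\dd(x_\lambda, y_\lambda) \to 0$.

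The main subtlety is the first inclusion, where one must carefully move between ``going to infinity in $\hx$'' and ``going to infinity in $\hx \times \hx$''; this is precisely where the properness hypothesis and Remark \ref{obsepropioinfty} are indispensable, since otherwise one could not lift the limit in $\tx$ to both coordinates simultaneously and extract the common limit point in $X$ demanded by characterization b) of $\ce$.
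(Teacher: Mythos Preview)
Your proof is correct. The route differs from the paper's in both inclusions, though the underlying ideas are close.

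For $\ce \subset \ceo$, the paper gives a direct covering argument: fixing $\dd$ and $\epsil$, it uses the neighborhood characterization (property c) of Definition \ref{propcoarsecompacto}) to pick for each $z\in X$ an open $V_z$ with $E(V_z)\subset \BB_\dd(z,\epsil/2)$, sets $K=\tx\setminus\bigcup V_z$, and bounds $\dd(x,y)$ by the triangle inequality through $z$. You instead argue by contradiction with nets and property b), passing to a convergent subnet in $\tx\times\tx$ and using continuity of $\dd$. Your argument is shorter and avoids building $K$ explicitly; the paper's is more constructive.

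For $\ceo \subset \ce$, the paper verifies property a) (the closure meets $\tx\times\tx\setminus\hx\times\hx$ only in the diagonal of $X$), while you verify property b) via the triangle inequality $\dd(y_\lambda,x)\leq \dd(y_\lambda,x_\lambda)+\dd(x_\lambda,x)$. The two are essentially the same computation unwound, since the closure point $(x,y)$ in the paper's argument is exactly the limit of your subnet.

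Overall your version is a clean, net-based treatment of both directions using characterization b) throughout, whereas the paper switches between characterizations c) and a). Either is fine; yours is arguably more uniform.
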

\begin{proof}
Denote by $\ce$ and $\ceo$ the topological coarse structure attached
to $\tx$ and the $C_0$ coarse structure over attached to $\ddd|_\hx$
respectively.

Let $E\in \ce$ symmetric. Take $\dd\in\ddd$ and $\epsil>0$. For
ever$z\in X$, $\BB_{\dd}(z,\frac{\epsil}{2})$ is a neighborhood of
$z$. Then, by property b) of Definition \ref{propcoarsecompacto},
there exist an open neighborhood $V_z$ of $z$  contained in
$\BB_{\dd}(z,\frac{\epsil}{2})$ such that $E(V_z)\subset
\BB_{\dd}(z,\frac{\epsil}{2})$. Consider the compact set
$K=\tx\bs\bigcup_{z\in X} V_z$. Pick $(x,y)\in E\bs K\times K$.
Suppose, without loss of generality, that $x\not\in K$. Then, $x\in
V_z\subset \BB_{\dd}(z,\frac{\epsil}{2})$ for any $z$ and, hence,
$y\in E(V_z)\subset \BB_{\dd}(z,\frac{\epsil}{2})$. Thus:
$$\dd(x,y)\leq \dd(x,z)+\dd(z,y)< \epsil$$
and $\ce\subset \ceo$.\par

Let $E\in \ceo$. Let us see that $E$ satisfies property a) of
Definition \ref{propcoarsecompacto}.

Pick $(x,y)\in \left(\adh_{\tx\times \tx} E\right) \bs (\hx\times
\hx)$ and take $\{(x_\lambda,y_\lambda)\}\subset E$ such that
$(x_\lambda,y_\lambda)\rightarrow (x,y)$. Thus,
$(x_\lambda,y_\lambda)\rightarrow\infty$ in $\hx\times\hx$.
Consequently, $\dd(x,y)=\lim\dd(x_\lambda,y_\lambda)=0$ for every
$\dd\in\ddd$. Then, $x=y$, hence $E\in\ce$ and $\ceo\subset
\ce$.\end{proof}

\begin{deff} Let $\xid$ be a compactification pack and $\ddd$ a family of pseudometrics which genere $\tx$'s topology.
The $C_0$ coarse structure over $\hx$ attached to $\xid$, denoted by
$\ceo\xid$ or by $\ceo$ when no confusion can arise, is the
topological coarse structure attached to the compactification $\tx$,
i. e. the $C_0$ coarse structure attached to $\ddd|_\hx$.\end{deff}

Taking into account that if $\xiv$ is a compactification pack and
that if $C(\tx)=\langle F\rangle$ then $\{\dd_f:f\in F\}$ generates
$\tx$'s topology, we have that:

\begin{cor}\label{invhigson1} Let $\xiv$ be a compactification pack.
Consider $F\subset C(\tx)$ such that $C(\tx )=\langle F\rangle$.
Then, $$\ceo(\tx)=\{E\subset\hx\times\hx:\forall f\in
F,\lim_{\substack{(x,y)\rightarrow\infty\\(x,y)\in
E}}\dd_f(x,y)=0\}$$.\end{cor}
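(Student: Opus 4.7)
The plan is to reduce the statement to Proposition \ref{coarsecomptc0gen}, which already tells us that $\ceo(\tx)$ coincides with the $C_0$ coarse structure $\ceo(\ddd|_\hx)$ for any family of pseudometrics $\ddd$ generating the topology of $\tx$. Setting $\ddd = \{\dd_f : f\in F\}$, the task splits into two clean steps: (i) verify that this $\ddd$ really generates $\tx$'s topology, and (ii) observe that since $\ddd|_\hx$ then generates $\hx$'s topology, condition b) in the definition of $\ceo(\ddd|_\hx)$ automatically implies condition a), so properness need not be listed.

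For step (i) I would argue via the criterion recalled in the preliminaries: on a compact Hausdorff space, a family of pseudometrics generates the topology if and only if it separates points. So it suffices to check that $\{\dd_f : f\in F\}$ separates the points of $\tx$. Take $x\neq y$ in $\tx$. Since $\tx$ is compact Hausdorff, $C(\tx)$ separates points, so there is some $g\in C(\tx)$ with $g(x)\neq g(y)$. Because $C(\tx) = \langle F\rangle$, the function $g$ is obtained from elements of $F$ (and constants) by the algebra operations; if every $f\in F$ satisfied $f(x) = f(y)$ then the whole subalgebra they generate would evaluate equally at $x$ and $y$, contradicting $g(x)\neq g(y)$. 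Hence some $f\in F$ has $f(x)\neq f(y)$, i.e.\ $\dd_f(x,y) > 0$, as required.

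For step (ii), $\hx$ is open in $\tx$, so $\ddd|_\hx$ generates the subspace topology on $\hx$. By the remark following Proposition \ref{ceodddec}, under this hypothesis membership of $E$ in $\ceo(\ddd|_\hx)$ is equivalent to condition b) alone, namely $\lim_{\substack{(x,y)\to\infty\\(x,y)\in E}} \dd_f(x,y) = 0$ for every $f\in F$. Combining with step (i) and Proposition \ref{coarsecomptc0gen}, we get
\[
\ceo(\tx) \;=\; \ceo(\ddd|_\hx) \;=\; \Bigl\{E\subset\hx\times\hx : \forall f\in F,\ \lim_{\substack{(x,y)\to\infty\\(x,y)\in E}} \dd_f(x,y)=0\Bigr\}.
\]

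The only genuinely non-bookkeeping point is the point-separation argument in step (i), and even there the main subtlety is just interpreting the notation $\langle F\rangle$ (subalgebra, possibly with $1$ and closure) consistently enough that the implication ``all $f\in F$ agree at $x,y$ $\Rightarrow$ all of $C(\tx)$ agrees at $x,y$'' is immediate; once this is settled the rest is a direct invocation of results already proved in the paper.
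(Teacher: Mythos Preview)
Your proposal is correct and follows exactly the approach the paper intends: the sentence immediately preceding the corollary states that $C(\tx)=\langle F\rangle$ implies $\{\dd_f:f\in F\}$ generates $\tx$'s topology, after which the result is an immediate application of Proposition \ref{coarsecomptc0gen} together with the remark after Proposition \ref{ceodddec}. You have simply supplied the details the paper leaves implicit, including the point-separation argument for step (i).
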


\begin{cor}\label{invhigson2} Let $\xiv$ be a compactification pack.
Then, $$\ceo(\tx)=\{E\subset\hx\times\hx:\forall f\in
C(\tx),\lim_{\substack{(x,y)\rightarrow\infty\\(x,y)\in
E}}\dd_f(x,y)=0\}$$.
\end{cor}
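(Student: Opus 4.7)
The plan is to observe that this corollary is the particular case of Corollary \ref{invhigson1} obtained by taking $F = C(\tx)$ itself. Since $C(\tx) = \langle C(\tx)\rangle$ trivially (the subalgebra generated by $C(\tx)$ is $C(\tx)$), the hypothesis of Corollary \ref{invhigson1} is automatically satisfied, and the conclusion is exactly the statement of Corollary \ref{invhigson2}.

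The only thing worth checking is that the hypothesis ``$C(\tx) = \langle F\rangle$'' in Corollary \ref{invhigson1} is indeed reflexive in the sense that $F = C(\tx)$ is an admissible choice, but this is immediate: the generated (closed) subalgebra of any subset is always contained in $C(\tx)$, and it contains $F$, so if $F = C(\tx)$ we get equality. There is no real obstacle here, since the work has already been done in establishing Proposition \ref{coarsecomptc0gen} (which identified $\ceo(\tx)$ with the $C_0$ structure for any family of pseudometrics generating the topology) and in deriving Corollary \ref{invhigson1} from it, using the fact that $\{\dd_f : f \in F\}$ generates $\tx$'s topology whenever $\langle F\rangle = C(\tx)$.

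So in practice the proof is just one line: ``Apply Corollary \ref{invhigson1} with $F = C(\tx)$.'' If one wanted to be more self-contained one could instead argue directly: since $\tx$ is a compact Hausdorff space, $C(\tx)$ separates points, so $\{\dd_f : f \in C(\tx)\}$ is a family of pseudometrics of $\tx$ that separates points and therefore (as noted in the preliminaries on pseudometrics for compact Hausdorff spaces) generates $\tx$'s topology; then Proposition \ref{coarsecomptc0gen} yields the claimed description of $\ceo(\tx)$. Either route produces the corollary without additional technical work.
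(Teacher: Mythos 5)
Your proposal is correct and matches the paper's (implicit) argument: Corollary \ref{invhigson2} is simply the special case $F=C(\tx)$ of Corollary \ref{invhigson1}, which itself rests on Proposition \ref{coarsecomptc0gen} via the fact that $\{\dd_f:f\in C(\tx)\}$ separates points and hence generates $\tx$'s topology. No further work is needed.
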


\begin{rmk}Observe the similarity between the characterization of the
topological coarse structure attached to a compactification $\tx$ by
means of $C(\tx)$ given in Corollary \ref{invhigson2} and the
definition of the algebra of continuous functions of the Higson-Roe
compactification attached to a preproper coarse structure $\ce$ (see
section \ref{section42}, below):
$$C_h(\ce)=\{f\in C_h(\hx):\forall
E\in\ce,\lim_{\substack{(x,y)\rightarrow\infty\\(x,y)\in
E}}\dd_f(x,y)=0\}$$
\end{rmk}

\begin{exa}\label{exaalexandrov}
Let $\hx$ be locally compact and let $A$ be its Alexandrov
compactification. Then, $E\in \ceo(A)$ if and only if $E$ is proper
(examples 2.8 and 2.30 of \cite{cg}, pgs 22 y 27). It can be also
easily proved using Corollary  \ref{invhigson2} and Lemma
\ref{propioc0} and taking into account that
$C(A)|_\hx=C_0(\hx)+\langle 1\rangle$.

\end{exa}
\begin{exa}

\label{ejalexandrovstonecechb} If $\hx$ is locally compact and
$\sigma$-compact space and $E\subset\hx\times\hx$, then $E\in
\ceo(\beta\hx)$ if and only if $E\subset K\times K\cup\Delta$ for
any compact subset $K$ of $\hx$.

If $E\subset  K\times K\cup \Delta$ for any compact subset $K$ of
$\hx$, clearly $E\in\ceo(\beta\hx)$, because it is preproper.
Suppose now that $E$ is a symmetric and proper subset of
$\hx\times\hx$ with $\Delta\subset\hx$ such that $E\not\in K\times
K\cup\Delta$ for every compact subset $K$ of $\hx$ and let us see
that $E\not\in\ceo(\beta\hx)$.

Let $\{K_n\}_{n=1}^\infty$ be a family of compact subsets of $\hx$
with $K_1\subset\mathring K_2\subset K_2\subset\mathring K_3\subset
K_3\subset\dots$ whose union is $\hx$.

Let us define by induction a sequence $\{(x_k,y_k)\}\subset E$ and
$\{n_k\}$ as follows. For $k=1$, take any $(x_1,y_1)\in E\bs
\Delta$. Then, $x_1\neq y_1$. Let $n_1$ such that $x_1,y_1\in
\mathring K_{n_1}$. Suppose that $x_k,y_k,n_k$, are defined with
$x_k\neq y_k$ and $x_k,y_k\in \mathring K_{n_k}$. Since $E(K_{n_k})$
is relatively compact, there exists $(x_{k+1},y_{k+1})\in E\bs
\big(E(K_{n_k})\times E(K_{n_k})\cup \Delta\big)$. Clearly,
$x_{k+1}\neq y_{k+1}$ and $x_{k+1},y_{k+1}\not\in K_{n_k}$. Let
$n_{k+1}>n_k$ be such that $x_{k+1},y_{k+1}\in \mathring
K_{n_{k+1}}$.

Put $K_ {n_0}=\varnothing$. For every $k$, take an open neighborhood
$U_k$ of $x_k$ such that $\overline U_k\subset \left(\mathring
K_{n_k}\bs K_{n_{k-1}}\right)\bs\{y_k\}$ and let $\mu_k:\overline
U_k\rightarrow [0,1]$ be a continuous function with $\mu_k(x_k)=1$
and $\mu_k(\delta U_k)=0$.

Clearly, $\overline U_k\cap \overline U_{k'}$ for every $k\neq k'$
and $\{y_j\}_{j=1}^\infty\cap \overline U_k=\varnothing$ for every
$k$. Let $f:\hx\rightarrow [0,1]$ be the continuous function such
that $f|_{\overline U_k}=\mu_k$ for every $k$ and
$f|_{\hx\bs\bigcup_{k=1}^\infty\overline U_k}=0$. Then, $f\in
C(\beta\hx)|_\hx$ with $f(x_k)=1$ and $f(y_k)=0$ for every $k$.
Thus, $\dd_f(x_{n_k},y_{n_k})=1$ for every $k$, with
$(x_{n_k},y_{n_k})\subset E$ and
$(x_{n_k},y_{n_k})\rightarrow\infty$, hence $E\not\in
\ceo(\beta\hx)$.

\end{exa}

\begin{exa}
 $(\Rset,\ceo(\beta\Rset))$ is not proper. Indeed, the coarse structure described in Example
\ref{ejalexandrovstonecechb} doesn't have neighborhoods of the
diagonal.
\end{exa}

\begin{exa}

\label{ejalexandrovstonecechd} Let $\hx$ and $\hxp$ be locally
compact Hausdorff spaces such that  $\hx$ is $\sigma$-compact and
consider a preproper coarse structure $\cep$ over $\hxp$. Then
$\hf:(\hx,\ceo(\beta\hx))\rightarrow(\hxp,\cep)$ is coarse if and
only if $\hf:\hx\rightarrow\hxp$ is biproper.

If $\hf$ is coarse then it is biproper, due to Example
\ref{exabiproper}. Suppose now that $\hf$ is biproper. Since
$\ceo(\beta\hx)$ and $\cep$ are preproper, $\hf$ is coarsely proper.
Let $E$ be a controlled set of $\ceo(\beta\hx)$. By Example
\ref{ejalexandrovstonecechb}, there exist a compact subset $K$ of
$\hx$ such that $E\subset K\times K\cup\Delta$.  Therefore,
$\hf\times\hf(E)\subset\hf(K)\times\hf(K)\cup\Delta\in\cep$. Then,
$\hf$ is coarse.
\end{exa}

\begin{prop} Let $\xiv$ be a compactification pack. Let $F\subset C(X)$
be such that $C(X)=\langle F\rangle$. For every $f\in F$, let
$\tf:\tx\rightarrow \BBC(0,\|f\|)\subset\Rset$ an extension of
$\tx$. Let $\widetilde F=\{\tf:f\in F\}$ and put $D=\{D_\tf:\tf\in
\widetilde F\}$. Then:
$$\ceo(\tx)=\ceo(\ddd)=\{E\subset\hx\times\hx\textrm{ proper }:\forall \tf\in \widetilde F\lim_{\substack{(x,y)\rightarrow\infty\\(x,y)\in E}}\dd_f(x,y)=0\}$$
\end{prop}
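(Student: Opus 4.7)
The plan is to establish the chain of equalities in two steps. The rightmost equality is essentially a restatement of the definition: by Proposition \ref{ceodddec} applied to the family $\ddd|_{\hx}=\{\dd_{\tf|_{\hx}}:\tf\in\widetilde F\}$ of pseudometrics on $\hx$, a subset $E\subset\hx\times\hx$ lies in $\ceo(\ddd)$ precisely when $E$ is proper and $\lim_{(x,y)\to\infty,\,(x,y)\in E}\dd_{\tf}(x,y)=0$ for every $\tf\in\widetilde F$. So the only content lies in the equality $\ceo(\tx)=\ceo(\ddd)$.

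For the inclusion $\ceo(\tx)\subset\ceo(\ddd)$, I would simply observe that each $\tf\in\widetilde F$ belongs to $C(\tx)$, so Corollary \ref{invhigson2} directly yields $\lim_{(x,y)\to\infty,\,(x,y)\in E}\dd_{\tf}(x,y)=0$ for every $E\in\ceo(\tx)$; and such $E$ is proper by clause (b) of Definition \ref{propcoarsecompacto}.

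The real work is in the reverse inclusion. Given $E\in\ceo(\ddd)$, I would verify clause (a) of Definition \ref{propcoarsecompacto}, namely that $\adh_{\tx\times\tx}(E)$ meets $\tx\times\tx\setminus\hx\times\hx$ only in the diagonal of $X\times X$. Picking such a point $(x,y)$ together with a net $\{(x_\lambda,y_\lambda)\}\subset E$ with $(x_\lambda,y_\lambda)\to(x,y)$, at least one of $x,y$ lies in $X$, so $(x_\lambda,y_\lambda)\to\infty$ in $\hx\times\hx$; since $E$ is proper, Proposition \ref{epropiolimites} and Remark \ref{obsepropioinfty} force both $x_\lambda\to\infty$ and $y_\lambda\to\infty$, ruling out the ``mixed'' configurations and giving $x,y\in X$. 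For each $\tf\in\widetilde F$, continuity gives $\tf(x_\lambda)\to\tf(x)$ and $\tf(y_\lambda)\to\tf(y)$, while the hypothesis provides $\dd_{\tf}(x_\lambda,y_\lambda)\to 0$; hence $\tf(x)=\tf(y)$, i.e., $f(x)=f(y)$ since $\tf|_X=f$. Because $C(X)=\langle F\rangle$, the family $F$ separates points of the compact Hausdorff space $X$, so $x=y$. The main potential obstacle---handling a possibly mixed limit with $x\in X$ but $y\in\hx$---is dispatched at once by properness of $E$, after which the argument reduces to the point-separation of $X$ by $F$.
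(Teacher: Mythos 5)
Your argument is correct, but it follows a genuinely different route from the paper's. The paper proves the statement algebraically: it invokes Lemma \ref{propioc0} to encode properness as membership in $\ceo(\{\dd_g:g\in C_0(\hx)\})$, extends each $g\in C_0(\hx)$ by zero on the corona so that $C(\tx)=\langle C_0(\hx)\cup\widetilde F\rangle$, and then applies Corollary \ref{invhigson1} to that generating family, so the whole content is absorbed into a Stone--Weierstrass-type generation statement. You instead argue directly by double inclusion: the inclusion $\ceo(\tx)\subset\ceo(\ddd)$ via Corollary \ref{invhigson2} and clause b) of Definition \ref{propcoarsecompacto}, and the reverse inclusion by verifying clause a) of Definition \ref{propcoarsecompacto} with a net argument, where properness of $E$ (Proposition \ref{epropiolimites}, Remark \ref{obsepropioinfty}) excludes the mixed limit points with one coordinate in $X$ and the other in $\hx$, and separation of points of $X$ by $F$ (a consequence of $C(X)=\langle F\rangle$) forces the remaining limit points onto the diagonal of $X\times X$. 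In effect you re-run the closure argument of Proposition \ref{coarsecomptc0gen} for a family of pseudometrics that only separates points of the corona, with properness compensating for the lack of separation inside $\hx$. What each approach buys: the paper's proof is shorter given Corollary \ref{invhigson1}, but it silently relies on the generation fact $C(\tx)=\langle C_0(\hx)\cup\widetilde F\rangle$, which is itself a separation/Stone--Weierstrass argument; yours is more self-contained and elementary, makes explicit exactly where the hypothesis of properness in the right-hand side is indispensable (your ``mixed configuration'' remark is precisely the point), and correctly identifies the rightmost equality as nothing more than the definition of $\ceo(\ddd)$ from Proposition \ref{ceodddec}, since $\ddd$ need not generate the topology of $\hx$.
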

\begin{proof}
By Lemma \ref{propioc0}, $E\subset\hx\times\hx$ is proper if and
only if $E\in\ceo(\{\dd_f:f\in C_0(\hx)\})$. We may suppose that
$C_0(\hx)\subset C(\tx)$ by defining $g|_X=0$. Then $C(\tx)=\langle
C_0(\hx)\cup \widetilde F\rangle$.

Thus, by corollary \ref{invhigson1}, $E\in\ceo(\tx)$ if and only if
for every $g\in C_0(\hx)$,
$\lim_{\substack{(x,y)\rightarrow\infty\\(x,y)\in E}}\dd_g(x,y)=0$
and for every $\tf\in \widetilde F$,
$\lim_{\substack{(x,y)\rightarrow\infty\\(x,y)\in
E}}\dd_\tf(x,y)=0$, i.e., if and only if $E$ is proper and for every
$\tf\in\widetilde F$,
$\lim_{\substack{(x,y)\rightarrow\infty\\(x,y)\in
E}}\dd_\tf(x,y)=0$.\end{proof}

Definition \ref{propcoarsecompacto} tells us how to map
compactification packs into coarse structures. To define a functor,
we have to say how to map compactifications pack's morphisms into
coarse maps. But before, we need to define a reasonable category of
morphisms between compactification packs:

\begin{deff} Let $\xiv$ and $\xpiv$ be compactification packs. We say
that $\tf:\tx\rightarrow\txp$ is asymptotically continuous if
$\tf(\hx)\subset\hxp$, $\tf|_\hx:\hx\rightarrow\hxp$ is biproper and
$\tf=T(\tf|_\hx)$.
\end{deff}

\begin{rmk}\label{obsasintotica} If $\tf:\tx\rightarrow\txp$ is asymptotically continuous, then $\tf(X)\subset X'$ and $\tf|_X:X\rightarrow X'$ is continuous (see Lemma \ref{infinitopropia}).
 $\tf:\tx\rightarrow\txp$ is asymptotically continuous if and only $\tf=T(\hf)$, for certain biproper function $\hf:\hx\rightarrow\hxp$.

\end{rmk}

\begin{rmk}\label{contasimptcont}
If $\tf:\tx\rightarrow\txp$ is continuous, then $\tf$ is
asymptotically continuous if and only if $\tf(\hx)\subset\hxp$ and
$\tf(X)\subset X'$ (see Lemma \ref{infinitopropia} and Example
\ref{ejsbipropiaa}).
\end{rmk}

\begin{rmk} Using Remark \ref{obsasintotica} and Lemma \ref{ltfunctoriales}, it is easy to check that the
composition of asymptotically continuous maps is asymptotically
continuous. Moreover, the identity is an asymptotically continuous
map. Then, the compactification packs with the asymptotically
continuous maps form a category.
\end{rmk}

\begin{prop}\label{ldefcoarse1} Let $\xiv$ and $\xpiv$ be compactification
packs. Suppose $\hf:\hx\rightarrow\hxp$ is biproper function such
that $L(\hf)$ is defined. Then, $\hf:(\hx,\ceo(\tx))\rightarrow
(\hxp,\ceo(\txp))$ is coarse.\end{prop}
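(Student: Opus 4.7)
The plan is to verify the two defining properties of a coarse map separately, using the fact that $\ceo(\tx)$ and $\ceo(\txp)$ are both preproper (as noted in the remark after Definition \ref{propcoarsecompacto}) and the pseudometric characterization of these structures given in Proposition \ref{coarsecomptc0gen}. Fix families $\ddd$ and $\ddd'$ of pseudometrics generating the topologies of $\tx$ and $\txp$ respectively, so that $\ceo(\tx)=\ceo(\ddd|_\hx)$ and $\ceo(\txp)=\ceo(\ddd'|_\hxp)$.

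For coarse properness, since both coarse structures are preproper, the observation at the end of the coarse geometry subsection tells us coarse properness is equivalent to topological properness. As $\hf$ is biproper, it is topologically proper, so we are done with this half immediately.

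For bornologousness, fix $E\in\ceo(\tx)$; we must show $\hf\times\hf(E)\in\ceo(\txp)$, which amounts to checking (a) properness of $\hf\times\hf(E)$ and (b) the pseudometric decay condition. For (a), given a relatively compact $K\subset\hxp$, biproperness gives that $\hf^{-1}(K)$ is relatively compact, then properness of $E$ gives that $E(\hf^{-1}(K))$ is relatively compact, and finally biproperness again gives that $\hf(E(\hf^{-1}(K)))$ is relatively compact; since this set contains $(\hf\times\hf(E))(K)$, we get one half of properness, and the argument for $(\hf\times\hf(E))^{-1}$ is symmetric.

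For (b), fix $\dd'\in\ddd'$ and a net $\{(\hf(x_\lambda),\hf(y_\lambda))\}\subset\hf\times\hf(E)$ with $(\hf(x_\lambda),\hf(y_\lambda))\to\infty$ in $\hxp\times\hxp$. Since $\hf\times\hf(E)$ is proper (by (a)), Remark \ref{obsepropioinfty} yields $\hf(x_\lambda)\to\infty$ and $\hf(y_\lambda)\to\infty$; biproperness of $\hf$ then gives $x_\lambda\to\infty$ and $y_\lambda\to\infty$, and another application of Remark \ref{obsepropioinfty} (using that $E$ is proper, as $E\in\ceo(\tx)$) gives $(x_\lambda,y_\lambda)\to\infty$ in $\hx\times\hx$. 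Since $E\in\ceo(\ddd|_\hx)$, we have $\dd(x_\lambda,y_\lambda)\to 0$ for every $\dd\in\ddd$. Now apply Lemma \ref{caractdel}, which converts hypothesis ``$L(\hf)$ is defined'' into exactly the implication we need: $\dd'(\hf(x_\lambda),\hf(y_\lambda))\to 0$ for every $\dd'\in\ddd'$. This finishes (b), and hence shows $\hf\times\hf(E)\in\ceo(\txp)$.

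The only real subtlety is making sure that the infinity hypotheses line up on both sides of $\hf$; this is handled by using biproperness twice together with Remark \ref{obsepropioinfty}. Once that is set up, Lemma \ref{caractdel} does the work, and no additional calculations are required.
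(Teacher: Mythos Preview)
Your proof is correct and follows essentially the same route as the paper: preproperness plus biproperness gives coarse properness, and Lemma \ref{caractdel} combined with the pseudometric description of $\ceo$ from Proposition \ref{coarsecomptc0gen} gives bornologousness. The only difference is that your step (a) is unnecessary: since $\ddd'$ generates the topology of $\txp$, Proposition \ref{ceodddec} (and its following remark) says the decay condition alone already forces properness of $\hf\times\hf(E)$, so the paper checks only (b); likewise the paper passes $(x_\lambda,y_\lambda)\to\infty$ directly from $(\hf(x_\lambda),\hf(y_\lambda))\to\infty$ via biproperness and the first half of Remark \ref{obsepropioinfty}, without going through both coordinates separately.
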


\begin{proof}We will use Proposition \ref{caractdel}'s characterization of $L(\hf)$'s existence and Proposition
\ref{coarsecomptc0gen}'s characterization of $\ceo(\tx)$ and
$\ceo(\txp)$. Let $\ddd$ and $\ddd'$ be two families of
pseudometrics which genere the topologies of $\tx$ and $\txp$
respectively.

Since $\hf$ is biproper and $\ceo(\tx)$ and $\ceo(\txp)$ are
preproper, $\hf$ is coarsely proper.

Choose $E\in\ceo(\tx)$. Let us see that $ \hf\times
\hf(E)\in\ceo(\txp)$. Fix $\dd'\in\ddd$ and $\epsil>0$. Pick
$(x'_\lambda,y'_\lambda)\subset \hf\times \hf(E)$ with
$(x'_\lambda,y'_\lambda)\rightarrow\infty$. Take
$x_\lambda,y_\lambda\in \hx$ such that $x'_\lambda= \hf(x_\lambda)$
and $y'_\lambda= \hf(y_\lambda)$ for every $\lambda$. Since $ \hf$
is biproper and $(\hf(x_\lambda),\hf(y_\lambda))\rightarrow \infty$,
we have that $(x_\lambda,y_\lambda)\rightarrow\infty$.

Since $E\in\ceo(\tx)=\ceo(\ddd)$, for every $\dd\in\ddd$,
$\dd(x_\lambda,y_\lambda)\rightarrow 0$. Since $L( \hf)$ is defined,
$\dd'(x'_\lambda,y'_\lambda)=\dd'(\hf(x_\lambda),
\hf(y_\lambda))\rightarrow 0$ for every $\dd'\in\ddd'$. Therefore, $
\hf\times
 \hf(E)\in\ceo(\ddd')=\ceo(\tx)$.\end{proof}

\begin{rmk} In particular, if $\hf:\hx\rightarrow\hxp$ is a
continuous and proper function which extends to a function
$\tf:\tx\rightarrow\txp$, then $\hf$ is coarse.\end{rmk}

\begin{prop}\label{ligualcercanas} Let $\xiv$ and $\xpiv$ be compactification packs. Suppose
$\hf_1,\hf_2:\tx\rightarrow\txp$ are asimptotic maps such that
$L(\hf_1)=L(\hf_2)$. Then $\hf_1$ and $\hf_2$ are close in
$\ceo(\txp)$.\end{prop}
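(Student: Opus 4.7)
The goal is to show that the set $E = \{(\hf_1(z), \hf_2(z)) : z \in \hx\}$ belongs to $\ceo(\txp)$. My plan is to fix a family $\ddd'$ of pseudometrics generating the topology of $\txp$ and use the $C_0$ description of $\ceo(\txp)$ from Proposition \ref{coarsecomptc0gen}, so that it suffices to verify the limit condition (b) of Proposition \ref{ceodddec} (properness of $E$ will then be automatic, since $\ddd'|_{\hxp}$ generates the topology of the open subset $\hxp\subset\txp$).

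The first step is to translate the hypothesis $L(\hf_1)=L(\hf_2)$ into the statement that for each $\dd'\in\ddd'$,
\[
\lim_{z\to\infty}\dd'\bigl(\hf_1(z),\hf_2(z)\bigr)=0.
\]
This is exactly the content of Lemma \ref{caraclfiguallg} (applied to $\hf_1$ and $\hf_2$ with $\ddd'$), so no work is needed here.

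Next, given $\dd'\in\ddd'$ and $\epsil>0$, the previous step provides a compact $K\subset\hx$ with $\dd'(\hf_1(z),\hf_2(z))<\epsil$ for every $z\in\hx\setminus K$. To obtain the required compact set in $\hxp$, I would use biproperness: since both $\hf_1$ and $\hf_2$ are asymptotically continuous, their restrictions $\hf_i:\hx\to\hxp$ are biproper, so $\hf_1(K)$ and $\hf_2(K)$ are relatively compact in $\hxp$. Define
\[
K' = \overline{\hf_1(K)}\cup\overline{\hf_2(K)}\subset\hxp,
\]
which is compact.

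Finally, for $(x',y')=(\hf_1(z),\hf_2(z))\in E\setminus K'\times K'$, at least one of $x',y'$ is outside $K'$; in either case $z$ must lie outside $K$ (since $\hf_i(z)\notin K'\supset\overline{\hf_i(K)}$ forces $z\notin K$). Hence $\dd'(x',y')=\dd'(\hf_1(z),\hf_2(z))<\epsil$, which establishes condition (b) and therefore $E\in\ceo(\ddd'|_{\hxp})=\ceo(\txp)$. I don't expect any serious obstacle: the argument is essentially a translation between the two equivalent characterizations (limit at infinity vs.\ escape-from-compact in $\hxp\times\hxp$), and biproperness is precisely the bridge that makes the translation work.
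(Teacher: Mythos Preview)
Your proposal is correct and follows essentially the same route as the paper's own proof: both fix a generating family $\ddd'$ of pseudometrics on $\txp$, invoke Lemma~\ref{caraclfiguallg} to obtain $\lim_{z\to\infty}\dd'(\hf_1(z),\hf_2(z))=0$, and then use biproperness of the $\hf_i$ to pass from ``$z\to\infty$ in $\hx$'' to ``$(\hf_1(z),\hf_2(z))\to\infty$ in $\hxp\times\hxp$'', thereby verifying the $C_0$ condition for $E$. The only cosmetic difference is that the paper argues with nets (using directly that $(\hf_1(z),\hf_2(z))\to\infty\iff z\to\infty$ and the limit formulation of $\ceo(\ddd')$), whereas you spell out the compact-set version by building $K'=\overline{\hf_1(K)}\cup\overline{\hf_2(K)}$; these are two phrasings of the same step.
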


\begin{proof}We will use Proposition
\ref{invhigson2}'s characterization of $\ceo(\txp)$ and Proposition
\ref{caraclfiguallg}'s of $L(\hf_1)=L(\hf_2)$. Let $\ddd'$ be a
family of pseudometrics which generes $\txp$'s topology.

Let us see that the set $E=\{(\hf_1(x),\hf_2(x)):x\in\hx\}$ is in
$\ceo(\txp)$. Since $\hf_1$ and $\hf_2$ are biproper,
$(\hf_1(x),\hf_2(x))\rightarrow \infty$ if and only if
$x\rightarrow\infty$. Then, for every $\dd'\in\ddd'$:
$$\lim_{\substack{(y_1,y_2)\rightarrow\infty\\(y_1,y_2)\in
E}}\dd'(y_1,y_2)=\lim_{\substack{(\hf_1(x),\hf_2(x))\rightarrow\infty\\x\in\hx}}\dd'(\hf_1(x),\hf_2(x))=$$
$$\lim_{\substack{x\rightarrow\infty\\x\in\hx}}\dd'(\hf_1(x),\hf_2(x))=0$$
Therefore, $E\in\ceo(\txp)$ and hence, $\hf_1$ and $\hf_2$ are
close.\end{proof}

We can rewrite Propositions \ref{ldefcoarse1} and
\ref{ligualcercanas} in the following sense:

\begin{prop}\label{ldefcoarse2} Let $\xiv$ and $\xpiv$ be compactification
packs. Suppose $\tf:\hx\rightarrow\hxp$ is an asymptotically
continuous function. Then, $\tf|_\hx:(\hx,\ceo(\tx))\rightarrow
(\hxp,\ceo(\txp))$ is coarse.

In addition, if $\tf_1,\tf_2:\hx\rightarrow\hxp$ are asymptotically
continuous functions such that $\tf_1|_X=\tf_2|_X$, then
$\tf_1|_\hx$ and $\tf_2|_\hx$ are close in $\ceo(\hxp)$.
\end{prop}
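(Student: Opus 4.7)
The plan is to observe that this proposition is essentially a bookkeeping reformulation of Propositions \ref{ldefcoarse1} and \ref{ligualcercanas} into the language of asymptotically continuous maps, so the strategy is simply to unpack the definition of asymptotic continuity into precisely the hypotheses required by those two propositions and invoke them.

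For the first assertion, I would begin by recalling that $\tf$ being asymptotically continuous means, by definition, that $\tf(\hx)\subset\hxp$, that $\tf|_\hx:\hx\rightarrow\hxp$ is biproper, and that $\tf=T(\tf|_\hx)$. The last equality is meaningful only when $L(\tf|_\hx)$ is defined (since $T$ is built out of $L$), and in that case $\tf|_X=L(\tf|_\hx)$. Consequently $\tf|_\hx$ is a biproper map whose limit operator $L(\tf|_\hx)$ exists, which is exactly the hypothesis of Proposition \ref{ldefcoarse1}. Applying that proposition yields at once that $\tf|_\hx:(\hx,\ceo(\tx))\rightarrow(\hxp,\ceo(\txp))$ is coarse.

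For the second assertion, I would apply the same unpacking to each of $\tf_1$ and $\tf_2$. Both $\tf_1|_\hx$ and $\tf_2|_\hx$ are then biproper maps with $L(\tf_1|_\hx)$ and $L(\tf_2|_\hx)$ defined, and the hypothesis $\tf_1|_X=\tf_2|_X$ translates, via the identifications $\tf_i|_X=L(\tf_i|_\hx)$, into the equality $L(\tf_1|_\hx)=L(\tf_2|_\hx)$. At this point Proposition \ref{ligualcercanas} applies verbatim and gives that $\tf_1|_\hx$ and $\tf_2|_\hx$ are close in $\ceo(\txp)$.

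There is no genuinely hard step: the whole argument amounts to matching hypotheses. The only point requiring a sentence of care is the observation that the defining equation $\tf=T(\tf|_\hx)$ implicitly guarantees the existence of $L(\tf|_\hx)$ and the identification $\tf|_X=L(\tf|_\hx)$, since otherwise the citation to Proposition \ref{ldefcoarse1} would not be rigorous.
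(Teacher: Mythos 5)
Your proposal is correct and is exactly the paper's intended argument: the paper introduces this proposition explicitly as a rewriting of Propositions \ref{ldefcoarse1} and \ref{ligualcercanas}, and your unpacking of asymptotic continuity (biproperness of $\tf|_\hx$, existence of $L(\tf|_\hx)$, and the identification $\tf|_X=L(\tf|_\hx)$ coming from $\tf=T(\tf|_\hx)$) is precisely the bookkeeping needed to invoke them.
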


\begin{rmk}\label{functorcompletoc0}
We have a functor from the compactification packs with the
asymptotically continuous functions to the preproper coarse spaces
with the coarse maps. It is given by:

$$\begin{array}{rcl}\xid&\overset{\ceo}{\longrightarrow} & \big(\hx,\ceo(\tx)\big)\\
 \big [\tf:\tx\rightarrow\txp\big ] & \overset{\ceo}{\longrightarrow} &
\big [\tf|_\hx:\hx\rightarrow\hxp\big ]\end{array}$$

This functor is kept if we consider the asymptotically continuous
functions identifying two when they are equal in the corona and the
coarse functions identifying two when they are close. Moreover, with
this identifications, the functor is faithful (see Proposition
\ref{ligualcercanas}).

Furthermore, given the compactification packs $\xid$ and $\xpid$, if
$\tf:\tx\rightarrow\txp$ and $\tgg:\txp\rightarrow\tx$ are
asymptotically continuous map such that $\tf|_X:X\rightarrow X'$ and
$\tgg|_X:X'\rightarrow X$ are topologically inverses, then
$\tf|_\hx$ and $\tgg|_\hxp$ are coarse inverses. Indeed,
$(\tgg\circ\tf)|_X=Id_\tx|_X$ and
$(\tf\circ\tgg)|_{X'}=Id_\txp|_{X'}$, hence
$\tgg|_\hxp\circ\tf|_\hx$ and $Id_\hx$ are close and
$\tf|_\hx\circ\tgg|_\hxp$ and $Id_\hxp$ are close.
\end{rmk}

\subsection{The Higson-Roe compactification. A functor between Coarse Structures and
Compactifications.}\label{section42}

Recall the following notions from \cite{cg}, pags. 29-30. Despite of
there the definitions are done using $\Cset$ functions, here we will
use $\Rset$ ones, because to our purpose, both are equivalent.

If $(\hx,\ce)$ is a preproper coarse space, we say that
$B_h(\hx,\ce)$ is the algebra of all bounded functions
$f:\hx\rightarrow \Rset$ such that
$\lim_{\substack{(x,y)\rightarrow\infty\\(x,y)\in E}}\dd_f(x,y)=0$
and that $C_h(\hx,\ce)$ is the subalgebra of continuous functions of
$B_h(\hx,\ce)$.

Since $C_h(\hx,\ce)$ is a closed subalgebra of the algebra of
bounded a continuous real functions of $\hx$, there exists a
compactification $\kh(\hx)$ of $\hx$, such that
$C(\kh(\hx,\ce))|_\hx=C_h(\hx,\ce)$. This is the compactification of
Higson-Roe. The Higson-Roe corona $\nu(\hx,\ce)$ is the corona of
that compactification, that is $\kh(\hx,\ce)\bs\hx$.

When no confusion can arise, we write $B_h(\ce)$, $C_h(\ce)$,
$\kh(\ce)$, $\nu(\ce)$ or $B_h(\hx)$, $C_h(\hx)$, $\kh(\hx)$,
$\nu(\hx)$.

If $\hf:(\hx,\ce)\rightarrow (\hxp,\cep)$ is a coarse map between
coarse preproper spaces, then $\hf^*(B_h(\hxp))\subset B_h(\hx)$ and
$\hf^*(B_0(\hxp))\subset B_0(\hx)$.

From \cite{cg} we take following results (propositions 2.45 - 2.48,
pags. 32-33). If $\hx$ is a locally compact space, $\ce$ and $\ce'$
are preproper coarse structures over $\hx$ and $K$ and $K'$ are
compactifications of $\hx$ then:

\begin{enumerate}[a) ]
\item If $\ce\leq\ce'$, then $K(\ce)\geq K(\ce')$ and, if $K\leq K'$, then $\ceo(K)\geq \ceo(K')$.
\item $\ce\leq \ceo(\kh(\ce))$ and $K\leq \kh(\ceo(K))$.
\item $\kh(\ce)=\kh(\ceo(\kh(\ce)))$ and $\ceo(K)=\ceo(\kh(\ceo(K)))$.
\item If $K$ is metrizable, then
$\kh(\ceo(K))\approx K$.
\item If $\dd$ is a proper metric of $\hx$ then $\ceo(\kh(\ce_b(\dd)))=\ce_b(\dd)$.
\end{enumerate}

As a corollary:
\begin{enumerate}[a) ]\setcounter{enumi}{5}
\item $K=\kh(\ceo(K))$ if and only if $K=\kh(\ce'')$ for certain preproper coarse structure $\ce''$.
\item $\ce=\ceo(\kh(\ce))$ if and only if $\ce=\ceo(K'')$ for certain compactification $K''$.
\end{enumerate}

Characterization of $\ceo(K)$ given in Proposition \ref{invhigson2},
allows us to prove some results easily. For example, a part of a):
If $K\leq K'$, then $C(K)|_\hx\subset C(K')|_\hx$, hence
$\{\dd_f:f\in C(K)|_\hx\}\subset \{\dd_f:f\in C(K')|_\hx\}$ and
thus, $\ceo(\{\dd_f:f\in C(K)|_\hx\})\supset \ceo(\{\dd_f:f\in
C(K')|_\hx\})$.

\begin{prop}\label{caractexistefunctorhigson}
Let $\hf:\hx\rightarrow\hxp$ a proper map between preproper coarse
spaces. Consider the compactification packs $(\nu \hx,\hx,
\kh(\hx))$ and $(\nu \hxp,\hxp, \kh(\hxp))$. Then $L(\hf)$ is
defined if and only if $\hf^*(C_h(\hxp))\subset
C_h(\hx)+B_0(\hx)$.

In particular, if $\hf$ is continuous, then $L(\hf)$ is defined.
\end{prop}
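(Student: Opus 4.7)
My plan is to read off the proposition directly from Lemma \ref{caractdel} once we identify the function algebras attached to the Higson--Roe compactification packs.

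First, by the defining property of the Higson--Roe compactification, $C(\kh(\hx))|_\hx=C_h(\hx)$ and $C(\kh(\hxp))|_\hxp=C_h(\hxp)$. Applying the equivalence (a)$\iff$(c) of Lemma \ref{caractdel} to the compactification packs $(\nu\hx,\hx,\kh(\hx))$ and $(\nu\hxp,\hxp,\kh(\hxp))$ therefore produces
\[
L(\hf)\text{ is defined}\ \iff\ \hf^*(C_h(\hxp))\subset C_h(\hx)+B_0(\hx),
\]
which is exactly the first assertion. This part is a pure translation of definitions, with no further estimate required.

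For the ``in particular'' clause I would prove the stronger inclusion $\hf^*(C_h(\hxp))\subset C_h(\hx)$. Fix $g\in C_h(\hxp)$; since $\hf$ and $g$ are continuous and $g$ is bounded, $g\circ\hf$ is continuous and bounded. It then suffices to check that $|g(\hf(x))-g(\hf(y))|\to 0$ as $(x,y)\to\infty$ inside any controlled $E\in\ce$. Three ingredients would combine to give this: (i) $\hf\times\hf(E)\in\cep$, placing the image net in a controlled set of $\hxp$; (ii) by Example \ref{exabiproper} the map $\hf$ is biproper, so $(x,y)\to\infty$ in $\hx\times\hx$ forces $(\hf(x),\hf(y))\to\infty$ in $\hxp\times\hxp$; (iii) $g\in C_h(\hxp)$ then yields $|g(\hf(x))-g(\hf(y))|\to 0$ directly.

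The main obstacle I anticipate is ingredient (i), which needs bornologousness of $\hf$ rather than mere properness. I read the statement's hypotheses as implicitly including bornologousness (so that $\hf$ is a continuous coarse map), since this is the natural class of morphisms for the Higson--Roe functor; granted this, (i)--(iii) combine in one line to complete the argument.
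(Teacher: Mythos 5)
Your first half is exactly the paper's argument: the paper, too, simply quotes Lemma \ref{caractdel} (a)$\Leftrightarrow$(c) together with the identification $C(\kh(\hx))|_\hx=C_h(\hx)$, so there is nothing to add there. For the ``in particular'' clause, your hesitation is the interesting point, and it is justified: the paper's own proof of that clause consists of the single asserted inclusion $\hf^*(C_h(\hxp))\subset C_h(\hx)$ for a continuous proper $\hf$, and that inclusion genuinely requires bornologousness --- your ingredients (i)--(iii) are precisely the missing justification, and only (i) uses anything beyond the stated hypotheses. As literally stated (proper and continuous, but not bornologous) the clause is false: take $\hx=\hxp=\Nset$ with $\ce=\ceo(A)$ (all proper sets are controlled, Example \ref{exaalexandrov}) and $\cep=\ceo(\beta\Nset)$ (Example \ref{ejalexandrovstonecechb}); the identity is continuous and proper between these preproper spaces, but $C_h(\hxp)$ is the set of all bounded sequences while $C_h(\hx)+B_0(\hx)$ contains only the convergent ones (for instance $n\mapsto(-1)^n$ lies in the former and not the latter), so by the first half $L(Id)$ is not defined. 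Hence your reading --- that $\hf$ should be a continuous \emph{coarse} map --- is the right repair, and it matches the only way the paper later uses the clause (e.g.\ in Proposition \ref{isomextension}); with that hypothesis your argument (i)--(iii) is complete, with the small remark that in (ii) only the implication $x_\lambda\rightarrow\infty\Rightarrow\hf(x_\lambda)\rightarrow\infty$ is actually needed to place $(\hf(x),\hf(y))$ at infinity inside the controlled set $\hf\times\hf(E)$.
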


\begin{proof}
By Proposition \ref{caractdel}, $L(\hf)$ is defined if and only if
$\hf^*(C(\kh(\hxp))|_\hxp)\subset C(\kh(\hx))|_\hx+B_0(\hx)$, that
is, $\hf^*(C_h(\hxp))\subset C_h(\hx)+B_0(\hx)$.

If $\hf$ is continuous, then $\hf^*(C_h(\hxp))\subset
C_h(\hx)\subset B_l(\hx,\kh\hx)=C_h(\hx)+B_0(\hx)$.
\end{proof}

\begin{rmk} Of course, $L(\hf)$ is defined if $\hf$ is continuous
outside a compact $K$. Indeed, if $K'$ is a compact subset of $\hx$
such that $\mathring K'$ contains $K$, for every $h'\in C_h(\hxp)$,
we can define $g\in C_h(\hx)$ such that $h\circ\hf-g\in B_0(\hx)$
---in which case, $\hf^*(h)=g+(h\circ\hf-g)\in C_h(\hx)+B_0(\hx)$---,
as follows: Consider the map $h'\circ \hf|_{\partial K'}:\partial
K'\rightarrow\Rset$. By Tietze extension theorem, there exists a
continuous extension $g_0: K'\rightarrow\Rset$. Put
$g:\hx\rightarrow\Rset$ with $g|_{\hx\bs\mathring K'}=h\circ\hf$ and
$g|_{K'}=g_0$. \end{rmk}

\begin{rmk} Suppose $\hx$ and $\hxp$ are preproper coarse spaces and consider their Higson-Roe compactications.
Clearly, $B_l(\hx,\kh\hx)=C_h(\hx)+B_0(\hx)\subset B_h(\hx)$. For
every coarse map $\hf:\hx\rightarrow\hxp$ we have that
$\hf^*(C_h(\hxp))\subset \hf^*(B_h(\hxp))\subset B_h(\hx)$.

That means that, when $B_l(\hx,\kh\hx)=B_h(\hx)$ we have that
$\hf^*(C_h(\hxp))\subset B_l(\hx,\kh\hx)$ and, by Proposition
\ref{caractexistefunctorhigson}, $L(\hf)$ is defined. This happens
when $\hx$ is a proper coarse space (see in ({sumabhchc0} dn
Proposition \ref{coarl}, below). But there are examples of maps
between preproper coarse spaces $\hf:\hx\rightarrow\hxp$ such that
$B_l(\hx,\kh\hx)\subsetneq B_h(\hx)$ and $L(\hf)$ is not defined
(see Examples \ref{ejemplocoarsenopropiaprevio} and
\ref{ejemplocoarsenopropia}, below).
\end{rmk}

If $\hx$ is a proper coarse space, then we have the following
equality (see Lemma 2.4 of \cite{cg} (pag. 40) and Lemma
\ref{algebrabl}):

\begin{equation}\label{sumabhchc0}
B_h(\hx)=B_l(\hx,\kh \hx)=C_h(\hx)+B_0(\hx)
\end{equation}

Consequently:

\begin{prop}\label{coarl} Let $\hf:\hx\rightarrow\hxp$ be a coarse map between coarse spaces, where $\hx$ is proper and $\hxp$, preproper.
Consider the compactification packs $(\nu \hx,\hx, \kh(\hx))$ and
$(\nu \hxp,\hxp, \kh(\hxp))$. Then, $L(\hf)$ is defined.\end{prop}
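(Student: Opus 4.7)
The plan is to reduce the statement directly to the characterization provided by Proposition \ref{caractexistefunctorhigson} combined with the identity (\ref{sumabhchc0}), both of which have just been made available. By Proposition \ref{caractexistefunctorhigson}, it suffices to verify the algebraic inclusion
\[
\hf^*\bigl(C_h(\hxp)\bigr)\subset C_h(\hx)+B_0(\hx).
\]

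First I would use the fact recalled right after the introduction of $B_h$, $C_h$: for every coarse map between preproper coarse spaces one has $\hf^*(B_h(\hxp))\subset B_h(\hx)$. Since $C_h(\hxp)\subset B_h(\hxp)$, this gives
\[
\hf^*\bigl(C_h(\hxp)\bigr)\subset B_h(\hx).
\]
Here the hypothesis that $\hf$ is coarse and both spaces are preproper is what is being used (note $\hx$ is in particular preproper, since it is proper).

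Next I would invoke the crucial hypothesis that $\hx$ is a proper coarse space, which, via equation (\ref{sumabhchc0}), yields the equality
\[
B_h(\hx)=B_l(\hx,\kh\hx)=C_h(\hx)+B_0(\hx).
\]
Chaining this with the previous inclusion gives $\hf^*(C_h(\hxp))\subset C_h(\hx)+B_0(\hx)$, and Proposition \ref{caractexistefunctorhigson} then delivers that $L(\hf)$ is defined. There is essentially no obstacle in this proof: the whole point is that the technical work has already been carried out in proving (\ref{sumabhchc0}) (the nontrivial ingredient is precisely that properness of the coarse structure on $\hx$ forces every element of $B_h(\hx)$ to split as a Higson function plus a function vanishing at infinity), so the statement follows as an immediate corollary by concatenating the two named results.
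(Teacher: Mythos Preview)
Your proof is correct and follows exactly the same route as the paper: both combine the inclusion $\hf^*(C_h(\hxp))\subset\hf^*(B_h(\hxp))\subset B_h(\hx)$ (valid for coarse maps between preproper spaces) with the identity $B_h(\hx)=C_h(\hx)+B_0(\hx)$ from (\ref{sumabhchc0}), and then invoke Proposition \ref{caractexistefunctorhigson}.
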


\begin{proof} By (\ref{sumabhchc0}),
$\hf^*(C_h(\hxp))\subset\hf^*(B_h(\hxp))\subset B_h(\hx)=
C_h(\hx)+B_0(\hx)$, hence $L(\hf)$ is defined, due to Proposition
\ref{caractexistefunctorhigson}.\end{proof}

\begin{prop}\label{cercanasldef} Let $(\hx,\ce)$ and $(\hxp,\cep)$
be preproper coarse spaces and consider the compactification packs
 $(\nu(\ce),\hx,\kh(\ce))$ and $(\nu(\cep),\hxp,\kh(\cep))$.
Let $\hf,\hg:\hx\rightarrow\hxp$ be coarse an closed in $\hxp$ such
that $L(\hf)$ is defined. Then, $L(\hg)=L(\hf)$.\end{prop}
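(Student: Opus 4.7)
The plan is to apply the characterization of $L(\hg)=L(\hf)$ given by Lemma \ref{caraclfiguallg}, using a convenient family of pseudometrics generating the topology of $\kh(\cep)$, and then exploit the closeness of $\hf$ and $\hg$ together with the defining property of $C_h(\cep)$.

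More precisely, I would choose on $\kh(\cep)$ the family of pseudometrics $\ddd'=\{\dd_h : h\in C(\kh(\cep))\}$, which generates its topology (since $\kh(\cep)$ is compact Hausdorff, any separating family of continuous pseudometrics does, and $C(\kh(\cep))$ clearly separates points). By Lemma \ref{caraclfiguallg}, to conclude $L(\hg)=L(\hf)$ it suffices to verify that for every $h\in C(\kh(\cep))$,
\[
\lim_{z\rightarrow\infty}\dd_h(\hf(z),\hg(z))=\lim_{z\rightarrow\infty}\bigl|h(\hf(z))-h(\hg(z))\bigr|=0.
\]

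Fix such an $h$, noting that $h|_\hxp\in C_h(\cep)$. The closeness hypothesis says that the set $E'=\{(\hf(z),\hg(z)):z\in\hx\}$ lies in $\cep$, so by the very definition of $C_h(\cep)$ we have
\[
\lim_{\substack{(x',y')\rightarrow\infty\\(x',y')\in E'}}\dd_h(x',y')=0.
\]
Thus, to finish I need to show that $z\rightarrow\infty$ in $\hx$ forces $(\hf(z),\hg(z))\rightarrow\infty$ in $\hxp\times\hxp$. This is where the preproperness comes in: since $\hf$ and $\hg$ are coarse between preproper spaces, Example \ref{exabiproper} tells us both are biproper, so $\hf(z)\rightarrow\infty$ and $\hg(z)\rightarrow\infty$, and hence $(\hf(z),\hg(z))\rightarrow\infty$ by Remark \ref{obsepropioinfty}. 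Combining this with the displayed limit above gives exactly what we need.

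The main (very mild) obstacle is just lining up the three ingredients correctly: the translation between the pseudometric characterization of $L(\hg)=L(\hf)$ in Lemma \ref{caraclfiguallg}, the definition of $C_h(\cep)$ in terms of vanishing of $\dd_h$ along controlled sets at infinity, and the passage from $z\rightarrow\infty$ in $\hx$ to $(\hf(z),\hg(z))\rightarrow\infty$ in $\hxp\times\hxp$ via biproperness. Once these are in place the argument is essentially a one-line chase.
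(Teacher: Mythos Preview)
Your proof is correct and follows essentially the same route as the paper: both use closeness to get $E'=\{(\hf(z),\hg(z)):z\in\hx\}\in\cep$, invoke the defining property of $C_h(\cep)$ along $E'$, and use biproperness (Example \ref{exabiproper}) to pass from $z\rightarrow\infty$ to $(\hf(z),\hg(z))\rightarrow\infty$. The only cosmetic difference is that you conclude via condition b) of Lemma \ref{caraclfiguallg} directly, whereas the paper rewrites the same limit as $(\hf^*-\hg^*)(h)\in B_0(\hx)$ and invokes condition c); this extra step is not needed.
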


\begin{proof} If $\hf$ and $\hg$ are close, then the set
$E=\{(\hf(x),\hg(x)):x\in\hx\}\in\cep$. Take $h\in
C(\kh(\cep))|_\hx=C_h(\ce)$. Since $\hf$ and $\hg$ are biproper:
$$0=\lim_{\substack{(y_1,y_2)\rightarrow\infty\\(y_1,y_2)\in
E}}\dd_h(y_1,y_2)=\lim_{\substack{(\hf(x),\hg(x))\rightarrow\infty\\x\in\hx}}\dd_h(\hf(x),\hg(x))=\lim_{x\rightarrow\infty}\dd_h(\hf(x),\hg(x))$$

Hence,
$$0=\lim_{x\rightarrow\infty}\big(h(\hf(x))-h(\hg(x))\big)=\lim_{x\rightarrow\infty}\big(\hf^*(h)-\hg^*(h)\big)(x)$$

Thus, $\hf^*(h)-\hg^*(h)\in B_0(\hx)$, because of Lemma
\ref{caractb0limit}. Therefore,
$(\hf^*-\hg^*)\big(C(\kh(\cep))|_\hx\big)\subset B_0(\hx)$ and, by
Lemma \ref{caraclfiguallg}, $L(\hg)=L(\hf)$.\end{proof}

We can rewrite Propositions \ref{caractexistefunctorhigson} and
\ref{cercanasldef} in the following sense:

\begin{prop}\label{coarl} Let $\hf,\hg:\hx\rightarrow\hxp$ be a coarse map between coarse spaces, where $\hx$ is proper and $\hxp$,
preproper. Then, $T(\hf)$ is an asymptotically continuous map
between the compactification packs $(\nu \hx,\hx, \kh(\hx))$ and
$(\nu \hxp,\hxp, \kh(\hxp))$.

If, moreover, $\hf$ and $\hg$ an closed in $\hxp$, then
$T(\hf)|_{\nu\hx}=T(\hg)|_{\nu\hx}$.\end{prop}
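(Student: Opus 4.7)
The plan is to verify each clause of the definition of asymptotic continuity for $T(\hf)$ and then to deduce the closeness claim directly from Proposition \ref{cercanasldef}. All the ingredients are already in place from the preceding results; the argument amounts to assembling them in the right order, so I do not anticipate a genuine obstacle — the only subtlety is checking that $L(\hf)$ takes values in $\nu\hxp$ (and not merely in $\kh(\hxp)$) so that $T(\hf)$ actually defines a map between the chosen compactification packs.

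First I would observe that since $(\hx,\ce)$ is proper and $(\hxp,\cep)$ is preproper, the immediately preceding Proposition (whose label unfortunately coincides with the one to be proved) shows that $L(\hf):\nu\hx\rightarrow\kh(\hxp)$ is defined, and consequently $T(\hf):\kh(\hx)\rightarrow\kh(\hxp)$ is defined. Next, Example \ref{exabiproper} gives that the coarse map $\hf$ between preproper spaces is biproper; in particular it is topologically proper. By Lemma \ref{infinitopropia}, the topological properness of $\hf:\hx\rightarrow\hxp$ is equivalent to $L(\hf)(\nu\hx)\subset\nu\hxp$, so $T(\hf)$ really maps $\kh(\hx)$ into $\kh(\hxp)$ and sends $\hx$ into $\hxp$.

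To conclude the asymptotic continuity, I note that by definition $T(\hf)|_\hx=\hf$, and $\hf$ is biproper as just argued; moreover $T(T(\hf)|_\hx)=T(\hf)$ tautologically. So all three clauses of the definition of asymptotic continuity are satisfied (equivalently, one can invoke Remark \ref{obsasintotica}, which states exactly that $T(\hf')$ is asymptotically continuous whenever $\hf'$ is biproper and $T(\hf')$ is defined).

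For the second part, since $L(\hf)$ is defined (from the first part) and $\hf,\hg$ are coarse maps that are close in $\hxp$, Proposition \ref{cercanasldef} applies and gives $L(\hg)=L(\hf)$. Therefore
\[
T(\hf)|_{\nu\hx}=L(\hf)=L(\hg)=T(\hg)|_{\nu\hx},
\]
which is the claimed equality. As noted above, the proof has no hard step; the main thing to be careful about is invoking Lemma \ref{infinitopropia} (via biproperness of $\hf$) so that the codomain of $L(\hf)$ is the corona $\nu\hxp$, making $T(\hf)$ a legitimate morphism between the stated compactification packs.
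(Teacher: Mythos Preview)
Your proposal is correct and follows exactly the route the paper intends: the paper gives no separate proof but simply presents this proposition as a rewriting of the preceding results (the first Proposition labeled \texttt{coarl}, which gives that $L(\hf)$ is defined, together with Proposition \ref{cercanasldef}), and you have spelled out precisely those ingredients, adding the biproperness from Example \ref{exabiproper} and Lemma \ref{infinitopropia} to confirm the codomain. Your write-up is in fact more explicit than the paper's own treatment, but the underlying argument is identical.
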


\begin{rmk} \label{functorcompletohigsonroe} Then, the Higson-Roe compactification induces a functor
from the proper coarse spaces with the coarse maps to the
compactification packs with the asymptotically continuous maps. It
is given by:

$$\begin{array}{rcl}(\hx,\ce)&\overset{\kh}{\longrightarrow} & \big(\nu \hx,\hx,\kh(\hx)\big)\\
 \big [\hf:\hx\rightarrow\hxp\big ] & \overset{\kh}{\longrightarrow} &
\big [T(\hf):\kh(\hx)\rightarrow\kh(\hxp)\big ]\end{array}$$

This functor is kept if identify two coarse maps when they are close
and two asymptotically continuous maps when they are equal in the
corona. Moreover, with this identifications, the functor is faithful
(see Proposition \ref{cercanasldef}).

Furthermore, given the proper coarse spaces $(\hx,\ce)$ and
$(\hxp,\cep)$, if $\hf:\hx\rightarrow\hxp$ and
$\hg:\hxp\rightarrow\hx$ are coarse inverses, then
$T(\hf)|_X=L(\hf)$ and $T(\hg)|_{X'}=L(\hg)$ are inverses. Indeed,
$\hg\circ\hf$ and $Id_\hx$ are closed and $\hf\circ\hg$ and
$Id_\txp$ are closed, hence $L(\hg)\circ
L(\hf)=L(\hg\circ\hf)=L(Id_\hx)=Id_X$ and $L(\hf)\circ
L(\hg)=L(\hf\circ\hg)=L(Id_\txp)=Id_{X'}$.
\end{rmk}

\begin{exa}\label{ejemplocoarsenopropia}

Let $\hx=[0,1]\times[0,1)$ and $\tx=\beta\hx$. Let
$\hy=\{0,1\}\times[0,1)$ and $\ty=\{0,1\}\times[0,1]$. Consider the
map $\hf:(\hx,\ceo(\tx))\rightarrow(\hy,\ceo(\ty))$ such that, for
every $(x,t)\in\hx$, $\hf(x,t)=(0,t)$ if $x\leq\frac{1}{2}$ and
$\hf(x,t)=(1,t)$ if $x>\frac{1}{2}$. Then,
$\hf:(\hx,\ceo(\tx))\rightarrow(\hx,\ceo(\ty))$ is a coarse map but
$L_{\kh(\ceo(\tx)),\kh(\ceo(\ty))}(\hf)$ is not defined.

Since $\hf:\hx\rightarrow\hx$ is a biproper map, example
\ref{ejalexandrovstonecechd} shows that
$\hf:(\hx,\ceo(\beta\hx))\rightarrow(\hx,\ceo(\ty))$ is coarse.

$\beta\hx\leq \kh(\ceo(\beta\hx))\leq\beta\hx$, hence
$\beta\hx\approx\kh(\ceo(\beta\hx))$. Since $\ty$ is metrizable,
$\ty\approx \kh(\ceo(\ty))$. To see that
$L_{\kh(\ceo(\tx)),\kh(\ceo(\ty))}(\hf)$ is not defined, we will
prove that $L_{\beta\hx,\ty}(\hf)$ is not defined.

Let $\hx^*=\beta\hx\bs\hx$. Since $\hx$ is connected, $\hx^*$ is.
Suppose $L_{\beta\hx,\ty}(\hf)$ is defined. Consider the ordered set
$[0,1)$ with the usual order and the net
$\{x_t\}_{t\in[0,1)}=\{(0,t)\}_{t\in[0,1)}$. Observe that
$\hf(x_t)\rightarrow (0,1)$. Since $x_t\rightarrow\infty$ in $\hx$,
there exist a subnet $x_\lambda$ such that $x_\lambda\rightarrow
\omega\in \hx^*$. Since $\hf(x_\lambda)$ is a subnet of $\hf(x_t)$,
we have that $\hf(x_\lambda)\rightarrow (0,1)$. Then,
$L(\hf)(\omega)=(0,1)$ and, hence, $(0,1)\in L(\hf)(\hx^*)$. Using a
similar argument, we get that $(1,1)\in L(\hf)(\hx^*)$. Then,
$L(\hf):\hx^*\rightarrow\{(0,1),(1,1)\}$ is continuous and
surjective, in contradiction with the connectedness of $\hx^*$.
\end{exa}

\begin{exa}\label{ejemplocoarsenopropiaprevio}
Let $\hx$, $\tx$, $\hy$, $\ty$ and
$\hf:(\hx,\ceo(\tx))\rightarrow(\hy,\ceo(\ty))$ like in Example
\ref{ejemplocoarsenopropia}. Let us see that
$B_l(\hx,\kh\ceo(\tx))\subsetneq B_h(\hx,\ceo(\tx))$.

Consider the map $g:\hy\rightarrow\Rset$, such that $g(0,t)=0$ and
$g(1,t)=1$ for every $t$. Observe that $g\in
C(\ty)|_\hy=C(\kh\hy)|_\hy\subset B_h(\hy)$. Let $h=g\circ\hf$.
Then, $f(x,t)=0$ if $t\leq \frac{1}{2}$ and $f(x,t)=1$ if
$t>\frac{1}{2}$. Since $\hf$ is coarse, $f=g\circ\hf=\hf^*(g)\in
B_h(\hx)$.

Suppose that $f\in B_l(\hx,\ceo(\tx))$. Then, $l(f)$ is defined.
Using a similar argument like in Example
\ref{ejemplocoarsenopropia}, we get that $l(f):\hx^*\rightarrow
\{0,1\}$ is continuous and surjective, in contradiction with the
connectedness of $\hx^*$, that follows from the connectedness of
$\hx$. Then, $f\in B_l(\hx,\ceo(\tx))$ and
$B_l(\hx,\kh\ceo(\tx))\subsetneq B_h(\hx,\ceo(\tx))$.\end{exa}

The following proposition generalizes Proposition 2.33 of \cite{cg}:

\begin{prop}\label{isomextension}Let $\xiv$ and $\xpiv$ be compactification packs
such that $\tx\approx \kh(\ceo(\tx))$ and
$\txp\approx\kh(\ceo(\txp))$. Let $\hf:\hx\rightarrow\hxp$ be a
continuous and proper map. Then
$\hf:(\hx,\ceo(\tx))\rightarrow(\hxp,\ceo(\txp))$ is coarse if and
only if $\hf$ extends to a continuous function
$\tf:\tx\rightarrow\txp$.

Moreover, if
$\hf_1,\hf_2:(\hx,\ceo(\tx))\rightarrow(\hxp,\ceo(\txp))$ are coarse
and $\tf_1,\tf_2:\tx\rightarrow\txp$ are extensions of
 $\hf_1$ and $\hf_2$ respectively, then $\hf_1$ and $\hf_2$ are
close if and only if $\tf_1|_X=\tf_x|_X$.
\end{prop}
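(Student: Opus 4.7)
The plan is to reduce both equivalences to the already established machinery of the operators $L$ and $T$, using the characterization of $L(\hf)$ being defined via pull-backs of algebras. The identifications $\tx\approx\kh(\ceo(\tx))$ and $\txp\approx\kh(\ceo(\txp))$ amount to $C(\tx)|_\hx=C_h(\ceo(\tx))$ and $C(\txp)|_\hxp=C_h(\ceo(\txp))$, which is the only place the hypothesis on $\tx,\txp$ enters.

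For the forward direction of the main equivalence, suppose $\hf$ is continuous, proper and coarse. Since $\hf$ is continuous and proper it is biproper (Example \ref{ejsbipropiaa}), so the task is to show that $L(\hf)$ is defined, because then $\tf:=T(\hf):\tx\rightarrow\txp$ extends $\hf$ and is continuous by Proposition \ref{lhfescont}. To see $L(\hf)$ is defined I will apply the equivalence a)$\Leftrightarrow$c) of Lemma \ref{caractdel}. Take $h\in C(\txp)|_\hxp=C_h(\ceo(\txp))$. Since $\hf$ is coarse, $\hf^*(B_h(\ceo(\txp)))\subset B_h(\ceo(\tx))$, so $h\circ\hf\in B_h(\ceo(\tx))$; since $\hf$ and $h$ are continuous, $h\circ\hf$ is continuous and bounded, so it lies in $C_h(\ceo(\tx))=C(\tx)|_\hx\subset C(\tx)|_\hx+B_0(\hx)=B_l(\hx,\tx)$. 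Hence condition c) of Lemma \ref{caractdel} holds, $L(\hf)$ is defined, and $T(\hf)$ is the desired continuous extension.

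For the reverse direction, suppose $\hf$ extends continuously to $\tf:\tx\rightarrow\txp$. Clearly $\tf(\hx)=\hf(\hx)\subset\hxp$, and properness of $\hf$ combined with the density of $\hx$ in $\tx$ forces $\tf(X)\subset X'$: for $x\in X$ and any net $x_\lambda\to x$ in $\hx$ one has $x_\lambda\to\infty$, hence $\hf(x_\lambda)\to\infty$, hence $\tf(x)=\lim\hf(x_\lambda)\in X'$. By Remark \ref{contasimptcont} the map $\tf$ is then asymptotically continuous, and Proposition \ref{ldefcoarse2} immediately gives that $\hf=\tf|_\hx$ is coarse between $(\hx,\ceo(\tx))$ and $(\hxp,\ceo(\txp))$.

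For the ``moreover'' clause, note that $\tf_i|_X=L(\hf_i)$ by construction of $T$, so the condition $\tf_1|_X=\tf_2|_X$ is exactly $L(\hf_1)=L(\hf_2)$. If $\hf_1$ and $\hf_2$ are close in $\ceo(\hxp)$, then $E=\{(\hf_1(x),\hf_2(x)):x\in\hx\}\in\ceo(\txp)$ and, since the $\hf_i$ are biproper, the argument of Proposition \ref{cercanasldef} (equivalently, Lemma \ref{caraclfiguallg}) yields $L(\hf_1)=L(\hf_2)$. Conversely, if $L(\hf_1)=L(\hf_2)$, Proposition \ref{ligualcercanas} applied to the biproper maps $\hf_1,\hf_2$ yields that they are close in $\ceo(\txp)$. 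The only mild obstacle is bookkeeping: one must keep track of the identification $C(\tx)|_\hx=C_h(\ceo(\tx))$ throughout, and verify biproperness of the maps involved so that Propositions \ref{ldefcoarse2}, \ref{ligualcercanas} and \ref{cercanasldef} apply; both are immediate from continuity and properness via Example \ref{ejsbipropiaa}.
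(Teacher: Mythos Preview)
Your proof is correct and follows essentially the same route as the paper's: the reverse direction via asymptotic continuity and Proposition~\ref{ldefcoarse2}/\ref{ldefcoarse1}, the forward direction via the algebraic criterion of Lemma~\ref{caractdel}, and the ``moreover'' clause via Propositions~\ref{ligualcercanas} and~\ref{cercanasldef}. The only cosmetic difference is that the paper routes the forward direction through Proposition~\ref{caractexistefunctorhigson} (stated for the Higson--Roe compactification) and then invokes the identifications $\tx\approx\kh(\ceo(\tx))$, $\txp\approx\kh(\ceo(\txp))$, whereas you make the identification $C(\tx)|_\hx=C_h(\ceo(\tx))$ at the outset and apply Lemma~\ref{caractdel} directly---this is the same argument unpacked.
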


\begin{proof}
Suppose that $\hf$ is extended to a continuous function
$\tf:\tx\rightarrow\txp$. Taking into account \ref{infinitopropia},
that means that $T(\hf)$ is defined. Since it is biproper (se Remark
\ref{obsasintotica}), by Proposition \ref{ldefcoarse1}, $\hf$ is
coarse.

Suppose now that $\hf:(\hx,\ceo(\txp))\rightarrow(\hxp,\ceo(\txp))$
is coarse. Then, by Proposition \ref{caractexistefunctorhigson},
$T(\hf):\kh(\ceo(\tx))\rightarrow\kh(\ceo(\txp))$ is defined. Since
$\tx\approx \kh(\ceo(\tx))$ and $\txp\approx\kh(\ceo(\txp))$,
$T(\hf):\tx\rightarrow\txp$ is defined. Since $\hf$ is continuous,
$T(\hf)$ is continuous. Then, $\hf$ has a continuous extension.

If $\tf_1|_X=\tf_2|_X$ then $L(\hf_1)=L(\hf_2)$, hence by
Proposition \ref{ligualcercanas}, $\tf_1$ and $\tf_2$ are close in
$\ceo(\txp)$. If $\hf_1$ are $\hf_2$ close in $\ceo(\txp)$, by
Proposition \ref{cercanasldef},
$L_{\nu(\ceo(\tx)),\nu(\ceo(\txp))}(\hf_1)=L_{\nu(\ceo(\tx)),\nu(\ceo(\txp))}(\hf_2)$.
Since $\tx\approx \kh(\ceo(\tx))$ and $\txp\approx\kh(\ceo(\txp))$
$L_{\tx,\txp}(\hf_1)=L_{\tx,\txp}(\hf_2)$. Then,
$\tf_1|_X=\tf_2|_X$.
\end{proof}

\begin{rmk} An equivalent way of enunciate Proposition
\ref{isomextension} is: If $\hf:(\hx,\ce)\rightarrow(\hxp,\cep)$ is
a proper and continuous map between preproper coarse spaces such
that $\ce=\ceo(\kh(\ce))$ and $\cep=\ceo(\kh(\cep))$, then $\hf$ is
coarse if and only if $\hf$ extends to a continuous function
$\tf:\kh(\hx)\rightarrow\kh(\hxp)$.

\end{rmk}

\begin{rmk}
It also can be proved by using the equivalence of categories of
Corollary \ref {eqcat1} - Remark \ref{functoracont}.
\end{rmk}

\begin{exa}\label{noexistefcoarse} Let $A=\Nset\cup\{\infty\}$ be the Alexandrov compactification of $\Nset$
and consider the compactification packs $(\{\infty\},\Nset,A)$ and
$(\Nset^*,\Nset,\beta\Nset)$. Then, there is no asymptotically
continuous maps $\tf:\tx\rightarrow\txp$ or, equivalently, there is
no coarse maps $\hf:(\Nset,\ceo(A))\rightarrow
(\Nset,\ceo(\beta\Nset))$.

Suppose such $\hf$ exists. Then, $\hf$ is biproper (see example
\ref{exabiproper}). Since $\Nset$ is not compact, $\hf(\Nset)$ is
not. Then, we may take $\{y_k\}_{k=1}^\infty\subset\hf(\Nset)$ with
$y_k\neq y_{k'}$ for every $k\neq k'$. For every $k$, take $x_k$
such that $\hf(x_k)=y_k$. Clearly, $x_k\neq x_{k'}$ for every $k\neq
k'$.

It is easy to check that the set $E=\{(x_{2m-1},x_{2m}):m\in\Nset\}$
is a proper subset of $\Nset\times\Nset$. Then, by Example
\ref{exaalexandrov}, $E\in\ceo(A)$. But
$\hf\times\hf(E)=\{(y_{2m-1},y_{2m}):m\in\Nset\}$ and, taking into
account Example \ref{ejalexandrovstonecechb}, it is easy to check
that $\hf\times\hf(E)\not\in \ceo(\beta\hx)$. This contradicts our
assumption and such $\hf$ doesn't exist. \end{exa}

\subsection{The Higson-Roe functor. A topological interpretation.}\label{section43}

If $\hf:\hx\rightarrow\hxp$, the Higson-Roe functor
$\nu(\hf):\nu\hx\rightarrow\nu\hxp$ is a continuous map between the
Higson-Roe coronas. Moreover, the functor is kept if we identify two
maps when they are close, that is: if $\hf$ and $\hg$ are close,
then $\nu\hf=\nu\hg$.

Let us describe the definition of $\nu\hf$ given in \cite{cg} (pag.
31, above). In \cite{cg2} there is another equivalent definition,
just for the bounded coarse structure, but easy to generalize taking
Lemma 2.40 of \cite{cg} (pag. 30) into account.

Let us consider the following isomorphisms:
\begin{enumerate}[a) ]
\item $\frac{B_h(\hx)}{B_0(\hx)}\approx \frac{C_h(\hx)}{C_0(\hx)}$.
For all $f\in B_h(\hx)$, the isomorphism maps $f+B_0(\hx)$ to
$f+o(f)+C_0(\hx)$, where $o(f)\in B_0(\hx)$ is a function such that
$f+o(f)$ is continuous. The existence of such $o(f)$ is guaranteed
by the equality (\ref{sumabhchc0}).

\item $\frac{C_h(\hx)}{C_0(\hx)}\approx \frac{C(\kh(\hx))}{I(\nu \hx)}$, where by $I(\nu
\hx)$ we understand the functions of $C(\kh(\hx))$ vanishing in
$\nu \hx$. For all $f\in C_h(\hx)$, the isomporphism maps
$f+C_0(\hx)$ to $\tf+I(\nu \hx)$, where $\tf$ is a continuous extension of
 $f$ to $\kh(\hx)$. In other words, the isomorphism maps
$f+C_0(\hx)$ to $t(f)+I(\nu \hx)$, where $t$ is the total map
attached to $(\nu \hx,\hx, \kh(\hx))$ and $\Rset$.
\item $\frac{C(\kh(\hx))}{I(\nu \hx)}\approx C(\nu(\hx))$. For all $f\in C(\kh(\hx))$, the isomorphism maps
$f+I(\nu \hx)$ to $f|_{\nu \hx}$.
\end{enumerate}

Hence, the composition of this isomorphisms defines an isomorphism
$J:\frac{B_h(\hx)}{B_0(\hx)}\rightarrow C(\nu \hx)$. If
$\hx$ and $\hxp$ are proper coarse spaces and
$\phi:\hx\rightarrow \hxp$ is coarse, then $\phi^*(B_h(\hxp))\subset B_h(\hx)$ and
$\phi^*(B_0(\hxp))\subset B_0(\hx)$. By this way, $\phi$ induces a morphism $\phi^*:\frac{B_h(\hxp)}{B_0(\hxp)}\rightarrow
\frac{B_h(\hx)}{B_0(\hx)}$.

Then, we have a morphism $\varphi:C(\nu \hxp)\approx
\frac{B_h(\hxp)}{B_0(\hxp)}\rightarrow
\frac{B_h(\hx)}{B_0(\hx)}\approx C(\nu \hx)$, given by
$\varphi=J\circ \phi^*\circ J^{-1}$.

By Gelfand-Naimark theorem in its real version or by Theorem 10.6 of
\cite{rcf}, (pag. 142), there exist an unique continuous function
 $\nu \phi:\hx\rightarrow\hxp$ such that $(\nu
\phi)^*=\varphi$. The operator $\phi\rightarrow \nu \phi$ is, in
fact, a functor: the Higson-Roe functor.

Given a coarse map $\phi:\hx\rightarrow\hxp$, between proper coarse
spaces, we have defined the functor
$T(\phi):\kh\hx\rightarrow\kh\hxp$ (see Remark
\ref{functorcompletohigsonroe}). This functor defines another
functor in the Higson-Roe coronas in a natural way, by taking
$T(\phi)|_{\nu \hx}:\nu \hx\rightarrow\nu\hxp$. But $T(\phi)|_{\nu
\hx}=L(\phi)$, so the functor is $L(\phi):\nu
\hx\rightarrow\nu\hxp$.

There is a natural question: Are $\nu\phi$ and $L(\phi)$ the same
functor? The following theorem will prove that:

\begin{teo}\label{higsoneslimite} Let $\hx$ and $\hxp$ be proper coarse spaces.
Consider the compactification packs $(\nu \hx,\hx, \kh(\hx))$ and
$(\nu \hxp,\hxp, \kh(\hxp))$. Then, for every coarse map
$\phi:\hx\rightarrow\hxp$ we have that $\nu \phi=L(\phi)$.\end{teo}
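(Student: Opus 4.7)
The plan is to verify that $L(\phi):\nu\hx\to\nu\hxp$ satisfies the defining property of $\nu\phi$, namely that $L(\phi)^*:C(\nu\hxp)\to C(\nu\hx)$ coincides with the algebra morphism $\varphi=J\circ\phi^*\circ J^{-1}$. Since by Gelfand--Naimark there is a unique continuous map with this property and $L(\phi)$ is continuous by Proposition \ref{lhfescont} (and defined by Proposition \ref{coarl}), this would immediately force $L(\phi)=\nu\phi$.

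The core computation is to chase an arbitrary $h\in C(\nu\hxp)$ through the three isomorphisms composing $\varphi$. First I would take a continuous extension $\tilde h\in C(\kh\hxp)$ of $h$; then $J^{-1}(h)$ is represented by $\tilde h|_\hxp\in C_h(\hxp)\subset B_h(\hxp)$. Applying $\phi^*$ gives the class of $\tilde h\circ\phi\in B_h(\hx)$ modulo $B_0(\hx)$. To apply $J$ we choose $o(\tilde h\circ\phi)\in B_0(\hx)$ with $g:=\tilde h\circ\phi+o(\tilde h\circ\phi)\in C_h(\hx)$, extend $g$ continuously to some $\tilde g\in C(\kh\hx)$, and restrict to obtain $\varphi(h)=\tilde g|_{\nu\hx}$.

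Now the key identification: since $g\in C_h(\hx)$ is continuous and extends to $\tilde g$, we have $\tilde g|_{\nu\hx}=l(g)$, where $l$ is the limit operator attached to $(\nu\hx,\hx,\kh\hx)$ and $\Rset$. The correction term lies in $B_0(\hx)$, so $l\bigl(o(\tilde h\circ\phi)\bigr)=0$ by Lemma \ref{caractb0limit}, whence $l(g)=l(\tilde h\circ\phi)$. Proposition \ref{prefunctor} then gives
\[
l(\tilde h\circ\phi)=l(\tilde h)\circ L(\phi)=\tilde h|_{\nu\hxp}\circ L(\phi)=h\circ L(\phi)=L(\phi)^*(h),
\]
so $\varphi(h)=L(\phi)^*(h)$ and $\nu\phi=L(\phi)$.

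The only delicate point is keeping track of the different representatives in the quotients and verifying they agree under the limit operator; the rest is a direct diagram chase. The main obstacle, as such, is simply to make explicit that the isomorphism $C_h(\hx)/C_0(\hx)\to C(\kh(\hx))/I(\nu\hx)\to C(\nu\hx)$ sends a class $[g]$ to $l(g)$, which is exactly Lemma \ref{caractb0limit} combined with the fact that a continuous extension restricted to the corona is the limit function.
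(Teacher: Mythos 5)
Your proposal is correct and follows essentially the same route as the paper: both chase an element of $C(\nu\hxp)$ through the three isomorphisms to see that $J$ sends a class $[f]$ to $l(f)$ (using Lemma \ref{caractb0limit} to kill the correction term in $B_0$), and then conclude via $l(\tilde h\circ\phi)=l(\tilde h|_{\hxp})\circ L(\phi)$ from Proposition \ref{prefunctor}. The only cosmetic difference is that the paper first records the general formulas $J(f+B_0(\hx))=l(f)$ and $J^{-1}(f)=j(f)+B_0(\hxp)$ with $l(j(f))=f$, while you work with the specific continuous representative $\tilde h|_{\hxp}$; the computation is the same.
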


\begin{proof}

Let $f\in B_h(\hx)$. The isomorphism
$\frac{B_h(\hx)}{B_0(\hx)}\approx\frac{C_h(\hx)}{C_0(\hx)}$ maps
$f+B_0(\hx)$ to $f+o(f)+C_0(\hx)$, the isomophism
$\frac{C_h(\hx)}{C_0(\hx)}\approx \frac{C(\kh(\hx))}{I(\nu \hx)}$
maps $f+o(f)+C_0(\hx)$ to $t\big(f+o(f)\big)+I(\nu \hx)$ and the
isomorphism
 $\frac{C(\kh \hx)}{I(\nu \hx)}\approx C(\nu \hx)$ maps
$t\big(f+o(f)\big)+I(\nu \hx)$ to $t\big(f+o(f)\big)|_{\nu
\hx}=l\big(f+o(f)\big)=l(f)+l(o(f))=l(f)+0=l(f)$. Hence, the isomorphism
$J:\frac{B_h(\hx)}{B_0(\hx)}\rightarrow C(\nu\hx)$ is given by the map
$J(f+B_0(\hx))=l(f)$, where $l$ is the limit function attached to
$(\nu \hx,\hx, \kh(\hx))$ and $\Rset$.

Consequently, the isomorphism, $J^{-1}:C(\nu \hxp)\rightarrow
\frac{B_h(\hxp)}{B_0(\hxp)}$, is given by
$J^{-1}(f)=j(f)+B_0(\hxp)$, where $j(f)\in B_h(\hxp)$ is any
function such that $l(j(f))=f$.

Let $\phi:\hx\rightarrow\hxp$ be a coarse map. Let $f\in C(\nu
\hxp)$. By definition,
$(\nu\phi)^*(f)=J(\phi^*(J^{-1}(f)))=J(\phi^*(j(f)+B_0(\hxp)))=J\big(j(f)\circ\phi+B_0(\hx)\big)=l(j(f)\circ\phi)=l(j(f))\circ
L(\phi)=f\circ L(\phi)=(L(\phi))^*(f)$. Then,
$(\nu\phi)^*=(L(\phi))^*$, hence $\nu\phi=L(\phi)$.
\end{proof}

\begin{deff}We extend the Higson-Roe functor in the following way:
To every map $\hf:(\hx,\ce)\rightarrow(\hxp,\cep)$ between preproper
coarse spaces, if it is defined, $\nu\hf:\nu\hx\rightarrow\nu\hxp$ is the map
$\nu\hf=L_{\kh\hx,\kh\hxp}\hf$.\end{deff}

\begin{rmk}\label{functorcompletonuydelta}Let us consider the Higson-Roe functor, from the proper
coarse spaces with the coarse maps to the compact Hausdorff spaces
with the continuous functions:
$$\begin{array}{rcl}(\hx,\ce)&\overset{\nu}{\longrightarrow} & \nu \hx\\
 \big [\hf:\hx\rightarrow\hxp\big ] & \overset{\nu}{\longrightarrow} &
\big [L(\hf):\nu(\hx)\rightarrow\nu(\hxp)\big ]\end{array}$$

And consider the trivial functor ``corona'', from the
compactification packs with the asymptotically continuous functions
to the compact Hausdorff spaces with the continuous maps, given by:
$$\begin{array}{rcl}\xid&\overset{\delta}{\longrightarrow} & X\\
 \big [\tf:\tx\rightarrow\txp\big ] & \overset{\delta}{\longrightarrow} &
\big [\tf|_X:X\rightarrow X'\big ]\end{array}$$

Consider also the functor $\kh$ defined in Remark
\ref{functorcompletohigsonroe}. It is easy to check that the
Higson-Roe functor is the composition of the functors $\kh$ and
$\delta$:
$$\nu=\delta\circ\kh$$

This functors are kept if identify two coarse maps when they are
close and two asymptotically continuous maps when they are equal in
the corona. Moreover, with this identifications, this functors are
faithful (see Proposition \ref{cercanasldef}).
\end{rmk}

\subsection{Categories.}\label{section44}

Let us summarize all the information given here about the morphisms
$\ceo$, $\kh$, $\nu$ and $\delta$ described in Remarks
\ref{functorcompletoc0}, \ref{functorcompletohigsonroe} and
\ref{functorcompletonuydelta} (take into account properties c)-g)
given in section \ref{section42}, above):

\begin{teo}\label{pseudoinvdiag}
Consider the category of proper coarse space with the coarse maps
(represented by $(\hx,\ce)$), the category of the compactification
packs such that the attached $C_0$ coarse structure is proper with
the asymptotically continuous maps (represented by $\xid$) and the
category of the compact Hausdorff spaces with the continuous maps
(represented by $X$). Consider the functors described on Remarks
\ref{functorcompletoc0}, \ref{functorcompletohigsonroe} and
\ref{functorcompletonuydelta}.

Then, $\ceo$ and $\hx$ are pseudoinverses and the following diagram
is commutative:
$$\xymatrix@C=21mm@R=21mm{(\hx,\ce) \ar@{->}@<-1.2ex>[r]_{\kh}^{\hspace{-15pt}\textrm{pseudoinverse}}
\ar@{->}^{\,\nu\,\,\,\,\,\,\,\,\,\,\,\,\,\,\,\circlearrowright} [rd]
&\xiv\ar@{->}@<-1.2ex>[l]_{\ceo} \ar@{->}[d]^{\delta}\\&X }$$

Moreover, this functors are preserved if we consider the coarse maps
identifying two when they are closed and the asymptotically
continuous functions identifying two when they are equal in the
corona. And, with this identifications, the functors are faithful.
\end{teo}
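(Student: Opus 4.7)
I would prove this theorem by assembling results already established in the preceding sections: it is essentially a summary, and the real work has been done elsewhere. First I would note that the functoriality of $\ceo$, $\kh$, $\nu$ and $\delta$, together with the fact that each respects the relevant identifications (close coarse maps on the coarse side, asymptotically continuous maps agreeing on the corona on the compactification side), is already recorded in Remarks \ref{functorcompletoc0}, \ref{functorcompletohigsonroe}, and \ref{functorcompletonuydelta}. It therefore remains to verify three things: (i) the pseudoinverse relation between $\ceo$ and $\kh$, (ii) commutativity of the triangle $\nu = \delta \circ \kh$, and (iii) faithfulness after the stated identifications.

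For (i), I would invoke the identities (a)--(g) recalled at the start of section \ref{section42}. Starting from a proper coarse space $(\hx,\ce)$ in the left-hand category, the composition $\ceo\circ\kh$ produces $(\hx,\ceo(\kh(\ce)))$, and property (g) gives $\ce=\ceo(\kh(\ce))$ precisely because $\ce$ lies in the image of $\ceo$ (which is the effective content of restricting to proper coarse structures arising from compactifications). Starting from a compactification pack $\xiv$ with $\ceo(\tx)$ proper, the composition $\kh\circ\ceo$ yields the pack associated with $\kh(\ceo(\tx))$, and property (f) yields $\tx\approx \kh(\ceo(\tx))$ since $\tx$ is then equivalent to a compactification of the form $\kh(\ce'')$ for $\ce''=\ceo(\tx)$. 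Naturality of the resulting isomorphisms in the two compositions is automatic from the definition of $\kh$ and $\ceo$ on morphisms via $T$ and restriction.

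For (ii), commutativity is essentially a restatement of prior work. On objects, $\delta(\kh(\hx,\ce))=\delta(\nu\hx,\hx,\kh(\hx))=\nu\hx$, which coincides with $\nu(\hx,\ce)$. On morphisms, if $\hf\colon(\hx,\ce)\to(\hxp,\cep)$ is coarse, then $(\delta\circ\kh)(\hf)=T(\hf)|_{\nu\hx}=L(\hf)$ by the very definition of $T$, and this equals $\nu(\hf)$ by Theorem \ref{higsoneslimite}. For (iii), faithfulness after identifications is exactly the content of Propositions \ref{ligualcercanas} and \ref{cercanasldef}: two asymptotically continuous maps produce close coarse restrictions iff they agree on the corona, and conversely two coarse maps between proper coarse spaces are close iff their $T$-extensions agree on the corona; hence after quotienting by these equivalences all three functors become injective on hom-sets.

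The main obstacle is the categorical bookkeeping in (i): one must check carefully that the restrictions placed on the two categories (proper coarse spaces on one side, compactification packs whose attached $\ceo$ is proper on the other) are precisely those needed to stay inside the images of $\ceo$ and $\kh$, so that the conditional statements (f) and (g) can actually be applied to yield equalities rather than mere inequalities from (b). Everything else in the proof reduces to quoting the appropriate result from the previous subsections.
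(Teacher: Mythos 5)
Your handling of the commutativity of the triangle (via Theorem \ref{higsoneslimite} and Remark \ref{functorcompletonuydelta}) and of faithfulness (via Propositions \ref{ligualcercanas} and \ref{cercanasldef}) is exactly the assembly the paper intends; the paper gives no separate proof of this theorem and simply points to those remarks and to properties b)--g) of Section \ref{section42}. The problem is your part (i). Theorem \ref{pseudoinvdiag} does \emph{not} restrict the left-hand category to coarse structures of the form $\ceo(K)$, nor the middle category to packs with $\tx\approx\kh(\ceo(\tx))$; those are precisely the extra bullet hypotheses that only appear in Corollary \ref{eqcat1}. For an arbitrary proper coarse space, property b) gives only $\ce\leq\ceo(\kh(\ce))$, and property g) cannot be invoked because ``$\ce=\ceo(K'')$ for some compactification $K''$'' is not a hypothesis of the theorem. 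Your use of f) on the other side is circular: f) says $\tx\approx\kh(\ceo(\tx))$ if and only if $\tx\approx\kh(\ce'')$ for some preproper $\ce''$, and choosing $\ce''=\ceo(\tx)$ presupposes exactly the equality you are trying to establish. If the equalities you assert held for every object of the two categories as the theorem defines them, the extra conditions of Corollary \ref{eqcat1} and property d) would be redundant, which is clearly not the paper's reading.

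What ``pseudoinverse'' means here is the weaker statement recorded in b) and c): the units satisfy $\ce\leq\ceo(\kh(\ce))$ and $\tx\leq\kh(\ceo(\tx))$, while the triple compositions collapse, $\kh(\ce)=\kh(\ceo(\kh(\ce)))$ and $\ceo(\tx)=\ceo(\kh(\ceo(\tx)))$; that is, $\kh\circ\ceo\circ\kh=\kh$ and $\ceo\circ\kh\circ\ceo=\ceo$, in analogy with a generalized inverse rather than an inverse equivalence. So for (i) you should quote b) and c) directly (and note, for well-definedness, that $\kh$ of a proper coarse space is a pack whose attached $C_0$ structure is again proper, since $\ce\leq\ceo(\kh(\ce))$ forces the latter to contain a neighborhood of the diagonal, and that $\ceo$ of a pack in the middle category is proper by hypothesis). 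The role of the condition ``the attached $C_0$ coarse structure is proper'' is only to make the functors land in the stated categories; it does not cut the categories down to the images of $\ceo$ and $\kh$, contrary to what your parenthetical remark suggests. With (i) corrected in this way, the rest of your argument stands and coincides with the paper's intended summary.
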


\begin{cor}\label{eqcat1}
Consider the categories and functors of the proposition above, with
the extra conditions:
\begin{itemize}
\item The coarse spaces $(\hx,\ce)$ are such that
$\ce=\ceo(\kh(\ceo))$ and the compactification packs $\xid$ are such
that $\tx\approx\kh(\ceo(\tx))$.
\item We identify two coarse maps when they are close and we
identify tho asymptotically continuous maps when they are equal in
the corona.
\end{itemize}
Then, $\ceo$ and $\kh$ is a equivalence of categories, the one
inverse of the other, $\nu$ and $\delta$ are faithful and the
diagram is commutative:
$$\xymatrix@C=21mm@R=21mm{(\hx,\ce) \ar@{->}@<-1.2ex>[r]_{\kh}^{\hspace{-15pt}\,\,\,\,\,\,\,\,\cong}
\ar@{->}^{\,\nu\,\,\,\,\,\,\,\,\,\,\,\,\,\,\,\circlearrowright} [rd]
&\xiv\ar@{->}@<-1.2ex>[l]_{\ceo} \ar@{->}[d]_{\delta}\\&X }$$
\end{cor}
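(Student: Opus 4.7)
The plan is to deduce Corollary \ref{eqcat1} directly from Theorem \ref{pseudoinvdiag}, using the additional hypotheses to upgrade the ``pseudoinverse'' relationship into a genuine equivalence of categories. The commutativity $\nu=\delta\circ\kh$, the faithfulness of all four functors, and functoriality itself are inherited unchanged from the theorem, so the only new content is to produce natural isomorphisms $\ceo\circ\kh\cong\mathrm{Id}$ and $\kh\circ\ceo\cong\mathrm{Id}$.

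For the composition $\ceo\circ\kh$, I would argue that it is literally the identity functor under the imposed restrictions. Indeed, on an object $(\hx,\ce)$ with $\ce=\ceo(\kh(\ce))$, we compute $\kh(\hx,\ce)=(\nu\hx,\hx,\kh\hx)$ and then $\ceo(\nu\hx,\hx,\kh\hx)=(\hx,\ceo(\kh\hx))=(\hx,\ce)$ on the nose. On the class $[\hf]$ of a coarse map modulo closeness, $\kh$ produces the class $[T(\hf)]$ modulo equality on the corona, and $\ceo$ sends this to $[T(\hf)|_\hx]=[\hf]$. So after the identifications, $\ceo\circ\kh=\mathrm{Id}$ strictly.

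For the composition $\kh\circ\ceo$ the identification is only up to a canonical natural isomorphism. Given a pack $\xid$ with $\tx\approx\kh(\ceo(\tx))$, the hypothesis supplies a homeomorphism $\eta_\xid:\kh(\ceo(\tx))\to\tx$ that is the identity on $\hx$, hence an isomorphism in the category of compactification packs. For naturality, I would check that for an asymptotically continuous map $\tf:\tx\to\txp$ with restriction $\hf=\tf|_\hx$, the two candidate fillings $\eta_\xpid\circ T(\hf)$ and $\tf\circ\eta_\xid$ agree in the quotient category: both are asymptotically continuous maps from $\kh(\ceo(\tx))$ to $\txp$ whose restriction to $\hx$ equals $\hf$, hence by Proposition \ref{ligualcercanas} (applied after transporting through $\eta$), they are close on $\hx$ and their total extensions agree on the corona, representing the same morphism in our quotient category.

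The main obstacle is precisely this naturality verification: the data is only ``up to identification'' (the homeomorphism $\eta_\xid$ is canonical only up to the equivalence of compactifications, and asymptotically continuous representatives are identified on the corona), so one must be careful that the chosen $\eta_\xid$ indeed assemble into a natural transformation. The cleanest route is via the density of $\hx$ in $\tx$ together with Propositions \ref{ligualcercanas} and \ref{cercanasldef}, which imply that asymptotically continuous maps are uniquely determined, in the quotient, by their restriction to $\hx$ up to closeness. Once naturality is settled, both compositions are naturally isomorphic to the identity, giving the equivalence of categories; the diagram commutes and $\nu,\delta$ remain faithful by Theorem \ref{pseudoinvdiag}.
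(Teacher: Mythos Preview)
Your proposal is correct, and it aligns with what the paper does---namely nothing explicit: the paper states Corollary~\ref{eqcat1} as an immediate consequence of Theorem~\ref{pseudoinvdiag} together with properties c) and f)--g) listed at the start of Section~\ref{section42}, and gives no further argument. Your write-up is precisely the unpacking of that implicit step.

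One small simplification: your concern about the naturality of $\eta$ is slightly overstated. Both candidate maps $\eta_{\xpid}\circ T(\hf)$ and $\tf\circ\eta_{\xid}$ are asymptotically continuous maps from $\kh(\ceo(\tx))$ to $\txp$ whose restriction to $\hx$ is literally $\hf$ (since $\eta$ is the identity on the dense open part). An asymptotically continuous map is by definition $T$ of its restriction to $\hx$, so the two maps are \emph{equal on the nose}, not merely identified in the quotient. You therefore do not need to invoke Propositions~\ref{ligualcercanas} or~\ref{cercanasldef} here; the naturality square commutes strictly.
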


\begin{rmk}\label{functoracont} Proposition \ref{pseudoinvdiag} and Corollary \ref{eqcat1}
are true if we consider all the preproper coarse spaces with the
coarse and continuous maps and all the compactification packs with
the asymptotically continuous and continuous maps (that is, in
$\xid$, continuous maps such that $\tf(\hx)\subset\hxp$  and
$\tf(X)\subset X'$ (see Remark \ref{contasimptcont}).\end{rmk}

\begin{rmk}\label{cuandonueseqcat} According to Corollary \ref{eqcat1}, $\ceo$ and $\kh$
induce an equivalence of categories and $\nu$ and $\delta$ are
faithful functors.

Moreover, $\delta$ (and consequently $\nu$) is trivially a dense
functor, because if $X$ is a compact space, then
$\xid=(X\times\{0\},X\times(0,1],X\times[0,l])$ is a
compactification pack such that $X\approx X\times\{0\}$ (moreover,
if $X$ is metrizable, $\xid$ is).

To be a equivalence of categories, $\nu$ or $\delta$ just should be
full. But generally they are not, as we can see in Example
\ref{nunoespleno}.
\end{rmk}

\begin{exa}\label{nunoespleno} Consider $\Nset$, its Alexandrov compactification
$A=\Nset\cup\{\infty\}$ and the compactification packs
$(\{\infty\},\Nset,A)$ and $(\Nset^*,\Nset,\beta\Nset)$. All the
maps $f:\{\infty\}\rightarrow \Nset^*$ are continuous, but from
Example \ref{noexistefcoarse} we get that there is no
$\hf:(\Nset,\ceo(A))\rightarrow (\Nset,\ceo(\beta\Nset))$ coarse
with $\nu\hf=f$ and no $\tf:A\rightarrow\beta\Nset$ asymptotically
continuous with $\tf|_{\{\infty\}}=f$.\end{exa}

If $\xid$ is a metrizable compactification pack, Property c) in
section \ref{section42}, above, tell us that
$\tx\approx\kh(\ceo(\tx))$. By this reason, $\xid$ and
$(\hx,\ceo(\tx))$ satisfies Corollary \ref{eqcat1}. As we will see
in Theorem \ref{equivmetrizable}, in this case, the equivalences of
categories is stronger. But before, we need the following technical
lemma:

\begin{lema}\label{tecniconuespleno} Let $\xid$ be a metrizable compactification pack and consider the
compactification pack $(X\times\{0\},X\times(0,1],X\times [0,1])$
and the homeomorphism $h:X\rightarrow X\times\{0\}$, $x\rightarrow
(x,0)$. Then there exist asymptotically continuous functions
$\tf:\tx\rightarrow X\times[0,1]$ and $\tgg:X\times[0,1]\rightarrow
\tx$ such that $\tf|_X=h$ and
$\tgg|_{X\times\{0\}}=h^{-1}$.\end{lema}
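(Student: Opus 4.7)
The plan is to build both maps directly from a fixed metric on $\tx$. Fix a metric $\dd$ on $\tx$ with diameter at most $1$ (using metrizability), and let $d_X:\tx\to[0,1]$ be $d_X(z)=\dd(z,X)$; this is continuous with $d_X^{-1}(0)=X$. A set $K\subset\hx$ is relatively compact in $\hx$ if and only if $d_X\geq\delta$ on $K$ for some $\delta>0$, and a set $L\subset X\times(0,1]$ is relatively compact in $X\times(0,1]$ if and only if $L\subset X\times[\epsil,1]$ for some $\epsil>0$; these characterizations will drive every biproperness check below.

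For $\tf$, set $\tf(x)=(x,0)$ for $x\in X$ and, invoking the Axiom of Choice together with the compactness of $X$, pick for each $z\in\hx$ a nearest point $r(z)\in X$ with $\dd(z,r(z))=d_X(z)$, declaring $\tf(z)=(r(z),d_X(z))\in X\times(0,1]$. Biproperness of $\tf|_\hx$ is immediate from the characterizations above, since the second coordinate of $\tf$ equals $d_X$. For $\tf=T(\tf|_\hx)$, suppose $z_\lambda\to x\in X$ with $z_\lambda\in\hx$; then $d_X(z_\lambda)\to0$ and
\[
\dd(r(z_\lambda),x)\leq \dd(r(z_\lambda),z_\lambda)+\dd(z_\lambda,x)=d_X(z_\lambda)+\dd(z_\lambda,x)\to0,
\]
so $\tf(z_\lambda)\to(x,0)=\tf(x)$.

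For $\tgg$, the idea is to choose, for each $(x,t)\in X\times(0,1]$, a point $\tgg(x,t)\in\hx$ in the open ball $\BB_\dd(x,t)$ that is as far from $X$ as possible. Define
\[
\varphi(x,t)=\sup\{d_X(z):z\in \BB_\dd(x,t)\cap\hx\}.
\]
Since $X$ is nowhere dense, $\BB_\dd(x,t)\not\subset X$ for every $t>0$, so $\varphi(x,t)>0$. For fixed $t$, the function $\varphi(\cdot,t)$ is lower semicontinuous on $X$: if $x_n\to x$, then any $z\in \BB_\dd(x,t)\cap\hx$ lies in $\BB_\dd(x_n,t)\cap\hx$ for large $n$, so $\liminf_n\varphi(x_n,t)\geq d_X(z)$, and taking the supremum over such $z$ gives $\liminf_n\varphi(x_n,t)\geq\varphi(x,t)$. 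Hence $g(t)=\min_{x\in X}\varphi(x,t)$ is attained and strictly positive for every $t>0$; moreover $g$ is nondecreasing (balls are nested in $t$) and satisfies $g(t)\leq t$ (since $d_X(z)\leq\dd(z,x)<t$ for $z\in\BB_\dd(x,t)$ and $x\in X$), so $g(t)\to0$ as $t\to0$. Set $\tgg(x,0)=x$ for $x\in X$, and, invoking the Axiom of Choice once more, for each $(x,t)\in X\times(0,1]$ pick $\tgg(x,t)\in \BB_\dd(x,t)\cap\hx$ with $d_X(\tgg(x,t))\geq g(t)/2$.

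It remains to verify that $\tgg$ is asymptotically continuous. For biproperness of $\tgg|_{X\times(0,1]}$: if $K\subset X\times[\epsil,1]$, then on $K$ we have $d_X(\tgg(x,t))\geq g(\epsil)/2>0$ by monotonicity of $g$, so $\tgg(K)$ is relatively compact in $\hx$; conversely, if $d_X(\tgg(x,t))\geq\delta$ on $K$, then, since $x\in X$, $t\geq \dd(\tgg(x,t),x)\geq d_X(\tgg(x,t))\geq\delta$, forcing $K\subset X\times[\delta,1]$. For $\tgg=T(\tgg|_{X\times(0,1]})$: whenever $(x_\lambda,t_\lambda)\to(x,0)$ with $t_\lambda>0$, the estimate $\dd(\tgg(x_\lambda,t_\lambda),x)\leq t_\lambda+\dd(x_\lambda,x)\to0$ yields $\tgg(x_\lambda,t_\lambda)\to x=\tgg(x,0)$. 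The main obstacle is precisely the uniform positivity of $g$ on each interval $[\epsil,1]$, which is what the lower semicontinuity plus compactness of $X$ is designed to provide; the rest reduces to triangle-inequality estimates and the characterizations of relatively compact sets noted at the outset.
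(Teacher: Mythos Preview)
Your proof is correct. The construction of $\tf$ is essentially identical to the paper's: pick a nearest point in $X$ and pair it with the distance $d_X(z)$, then check $L(\tf|_\hx)=h$ via the triangle inequality exactly as you do. The difference lies in the treatment of $\tgg$. The paper takes $g(z,t)$ to be a point of $\tx\setminus\BB(X,t)$ closest to $z$, but instead of verifying asymptotic continuity of $g$ directly it quotes an external result (Proposition~38 of \cite{jpmd}) stating that $f$ and $g$ are mutually inverse coarse equivalences between $(\hx,\ceo(\tx))$ and $(X\times(0,1],\ceo(X\times[0,1]))$; it then invokes Proposition~\ref{coarl} (which needs properness of the $C_0$ structure, available here by metrizability) to know that $L(g)$ exists, and finally uses functoriality of $L$ under closeness to conclude $L(g)=L(f)^{-1}=h^{-1}$. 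Your route avoids both the external citation and the coarse machinery: you define $\tgg(x,t)$ as a point of $\BB_\dd(x,t)\cap\hx$ with near-maximal distance to $X$, and the lower-semicontinuity/compactness argument giving $g(t)=\min_{x\in X}\varphi(x,t)>0$ is exactly what is needed to verify biproperness by hand. The paper's approach buys brevity by leaning on results already established elsewhere; yours is longer but fully self-contained and makes the lemma independent of the surrounding $C_0$ apparatus.
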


\begin{proof}Let $\dd_0$ be
a metric on $\tx$. Consider the metric $\dd=\frac{1}{k}\dd_0$ on
$\tx$, where $k=\sup_{x\in\tx}\dd_0(x,X)$.

Consider $\ceo=\ceo(\tx)$ and $\ceop=\ceo(X\times[0,1])$. By
Proposition 38 of \cite{jpmd}, pag. 109, there exists two coarse
equivalences $f:(\hx,\ceo)\rightarrow(X\times(0,1],\ceop)$ and
$g:(X\times(0,1],\ceop)\rightarrow(\hx,\ceo)$, the one inverse of
the other, satisfying:
\begin{itemize}
\item For every $x\in
\hx$ $f(x)=(z,t)$, with $t=\dd(x,X)$, $z\in X$ and $\dd(x,z)=t$.

\item For every $(z,t)\in X\times(0,1]$, $g(z,t)=y$ with $y\in \tx\bs\BB(X,t)$ and
$\dd(y,z)=\dd(z,\tx\bs\BB(X,t))$.\end{itemize}

Let us see that $L(\hf)=h$. Let $x\in X$ and $\{x_n\}\subset\hx$
such that $x_n\rightarrow x$. Put $\hf(x_n)=(z_n,t_n)$. Then $0\leq
t_n\leq \dd(x_n,X)\leq\dd(x_n,x)\rightarrow 0$ and hence,
$t_n\rightarrow 0$. Moreover,
$\dd(z_n,x)\leq\dd(z_n,x_n)+\dd(x_n,x)=t_n+\dd(x_n,x)\rightarrow 0$
and we get $x_n\rightarrow x$.

Therefore, $L(\hf)(x)=\lim \hf(x_n)=\lim (z_n,t_n)=(x,0)=h(x)$ and
we get $L(\hf)=h$.

$\nu=L$ is a functor and, since $\hf$ and $\hg$ coarse inverses,
they are inverses when we identify two maps when they are close.
Thus, $L(\hg)=\L(\hf)^{-1}=h^{-1}$. Then, by Proposition
\ref{coarl}, $T(\hf)$ and $T(\hg)$ satisfy the desired
properties.\end{proof}

\begin{teo}\label{equivmetrizable}
Consider the category of $C_0$ coarse spaces attached to a
completely bounded metric with the coarse maps identifying two when
they are close (represented by $(\hx,\ce)$), the category of the
metrizable compactification packs with the asymptotically continuous
maps identifying two when they are equal in the corona (represented
by $\xid$) and the category of the compact Hausdorff spaces with the
continuous maps (represented by $X$). Consider the functors
described on Remarks \ref{functorcompletoc0},
\ref{functorcompletohigsonroe} and \ref{functorcompletonuydelta}.

Then, $\ceo$, $\hx$, $\nu$ and $\delta$ are coarse equivalences,
being $\ceo$ and $\hx$ the one inverse of the other, and  the
following diagram is commutative:
$$\xymatrix@C=21mm@R=21mm{(\hx,\ce) \ar@{->}@<-1.2ex>[r]_{\kh}^{\hspace{-15pt}\,\,\,\,\,\,\,\,\cong}
\ar@{->}^{\,\nu\,\,\,\,\,\,\,\,\,\,\,\,\,\,\,\circlearrowright}|{\,\cong\,}
[rd] &\xiv\ar@{->}@<-1.2ex>[l]_{\ceo}
\ar@{->}[d]_{\delta\,\,}|{\,\cong\,}\\&X }$$

\end{teo}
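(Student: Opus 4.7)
The plan is to build on Corollary \ref{eqcat1}, which in the metrizable setting already delivers almost everything. Property c) of Section \ref{section42} guarantees $\tx\approx\kh(\ceo(\tx))$ for metrizable compactification packs, and the totally-bounded metric assumption on the coarse side gives $\ce=\ceo(\kh(\ce))$, so the hypotheses of Corollary \ref{eqcat1} are met. Consequently, $\ceo$ and $\kh$ already form inverse equivalences of categories, the triangle $\nu=\delta\circ\kh$ commutes, and both $\nu$ and $\delta$ are faithful. The remaining task is to upgrade $\delta$ (and hence $\nu=\delta\circ\kh$) from faithful to an equivalence of categories.

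For essential surjectivity of $\delta$, I would invoke Remark \ref{cuandonueseqcat}: given any compact metrizable $X$, the pack $(X\times\{0\},X\times(0,1],X\times[0,1])$ is a metrizable compactification pack whose corona is homeomorphic to $X$. Thus $\delta$ is dense on this subcategory. What really needs argument is fullness: given metrizable packs $\xid$ and $\xpid$ together with a continuous $f:X\to X'$, I must produce an asymptotically continuous $\tf:\tx\to\txp$ with $\tf|_X=f$.

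The construction uses Lemma \ref{tecniconuespleno} as a bridge. Applying that lemma to $\xid$ yields an asymptotically continuous $\tg_1:\tx\to X\times[0,1]$ with $\tg_1|_X$ equal to the homeomorphism $h_1:x\mapsto(x,0)$. Applying it to $\xpid$ yields an asymptotically continuous $\tg_2:X'\times[0,1]\to\txp$ with $\tg_2|_{X'\times\{0\}}=h_2^{-1}$. The continuous map $F:X\times[0,1]\to X'\times[0,1]$, $F(x,t)=(f(x),t)$, sends $X\times(0,1]$ into $X'\times(0,1]$ and $X\times\{0\}$ into $X'\times\{0\}$, so by Remark \ref{contasimptcont} it is asymptotically continuous. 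Setting $\tf:=\tg_2\circ F\circ\tg_1$ gives an asymptotically continuous map (composition is closed in this category), and the evaluation $\tf(x)=\tg_2(F(x,0))=\tg_2(f(x),0)=f(x)$ for $x\in X$ shows $\tf|_X=f$. Hence $\delta$ is full.

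Combining faithfulness, density, and fullness, $\delta$ is an equivalence of categories; since $\kh$ was already an equivalence by Corollary \ref{eqcat1}, the composition $\nu=\delta\circ\kh$ is too, and the commutativity of the diagram is automatic. The only serious step is the fullness construction, and the main obstacle there is purely conceptual: in the general (non-metrizable) case $\delta$ can fail to be full, as Example \ref{nunoespleno} shows, so some genuine use of metrizability is required. Lemma \ref{tecniconuespleno} is exactly where that hypothesis enters, using a metric on $\tx$ to build the radial coarse equivalence to $X\times[0,1]$ that serves as the bridge above. Once the lemma is available, assembling $\tf$ is routine.
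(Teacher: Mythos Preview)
Your proof is correct and follows essentially the same route as the paper: reduce to Corollary \ref{eqcat1} via the metrizable case of property c) in Section \ref{section42}, invoke Remark \ref{cuandonueseqcat} for density and faithfulness of $\delta$, and prove fullness of $\delta$ by factoring through the cylinders $X\times[0,1]$ and $X'\times[0,1]$ using Lemma \ref{tecniconuespleno} and the product map $(x,t)\mapsto(f(x),t)$. The only cosmetic difference is that the paper phrases the first step as ``the first category is the image of the second under $\ceo$'' rather than checking the two fixed-point conditions of Corollary \ref{eqcat1} separately, but the content is the same.
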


\begin{proof} A metric $\dd$ in a locally compact metric $\hx$ space is totally bounded if and only if
it is the metric of a (metrizable) compactification $\tx$ of $\hx$
restricted to $\hx$. Then, $(\hx,\ceo(\dd))=(\hx,\ceo(\tx))$ and the
first category is the image of the second category under $\ceo$.

Then, by Corollary \ref{eqcat1}, $\kh$ and $\ceo$ are equivalences
of categories, the one inverse of the other and the diagrama is
commutative.

According to Remark \ref{cuandonueseqcat}, to see that $\nu$ and
$\delta$ are equivalences of categories, we just have to check that
any of them is full.

Let us see that $\delta$ is full. Let $\xid$ and $\xpid$ be
metrizable compactifications packs, let $\phi:X\rightarrow X'$ be a
continuous map. Consider the compactification packs
$(X,X\times(0,1],X\times[0,1])$ and
$(X',X'\times(0,1],X'\times[0,1])$ and the maps $h:X\rightarrow
X\times\{0\}$, $x\rightarrow (x,0)$ and $h':X'\rightarrow
X'\times\{0\}$, $x'\rightarrow (x',0)$.

Observe that the map $\widetilde\phi_0:X\times[0,1]\rightarrow
X'\times[0,1]$, $(x,t)\rightarrow (\phi(x),t)$ is asymptotically
continuous. By Lemma \ref{tecniconuespleno}, there exists
asymptotically continuous functions $\hf:\tx\rightarrow
X\times[0,1]$ and $\tgg:X'\times[0,1]\rightarrow \txp$ such that
$\tf|_X=h$ and $\tgg|_{X'\times\{0\}}=h'^{-1}$.

Consider the asymptotically continuous map
$\widetilde\phi=\tgg\circ\widetilde\phi_0\circ\tf$. Fix $x\in X$.
Then,
$\widetilde\phi(x)=\tgg(\widetilde\phi_0(\tf(x)))=\tgg(\widetilde\phi_0(x,0)=\tgg(\phi(x),0)=\phi(x)$.
Then, $\widetilde\phi|_X=\phi$ and $\delta$ is full.\end{proof}

\begin{rmk} The fact that $\nu$ is a equivalence of categories in
this case, is proved independently in \cite{niya} and
\cite{moreno,moreno2}. As immediate a corollary, in this case, two
spaces are coarse equivalent if and only if they have homeomorphic
coronas. It is proved also independently in
\cite{grv,grv2}.\end{rmk}

Despite of this theorem is more general that Theorem 2 of \cite{zs},
because there the authors work just with the Z-sets in the Hilbert
Cube, there the theorem is stronger, because they work with
continuous and coarse maps. Using an argument of that Theorem,
Theorem 2 of \cite{zs} can be deduced form Theorem
\ref{equivmetrizable} here. But will give the theorem in a different
context.

\begin{teo}\label{equivmetrizablehc}

Consider the category of the complements of Z-sets in the Hilbet
cube $Q$ of the finite dimensional cube $[0,1]^n$, with $n\geq 1$,
with the $C_0$ coarse spaces attached metric of the Hilbert cube or
the finite dimensional cube respectively, with the coarse and
continuous maps identifying two when they are close (represented by
$(\hx,\ce)$), the category of the metrizable compactification packs
$\xid$ such that $\tx$ is the Hilbert cube or the finite dimensional
cube with the continuous and asymptotically continuous maps
identifying two when they are equal in the corona (represented by
$\xid$) and the category of the compact Hausdorff spaces with the
continuous maps (represented by $X$). Consider the functors
described on Remarks \ref{functorcompletoc0},
\ref{functorcompletohigsonroe} and \ref{functorcompletonuydelta}.

Then, $\ceo$, $\hx$, $\nu$ and $\delta$ are coarse equivalences,
being $\ceo$ and $\hx$ the one inverse of the other, and  the
following diagram is commutative:
$$\xymatrix@C=21mm@R=21mm{(\hx,\ce) \ar@{->}@<-1.2ex>[r]_{\kh}^{\hspace{-15pt}\,\,\,\,\,\,\,\,\cong}
\ar@{->}^{\,\nu\,\,\,\,\,\,\,\,\,\,\,\,\,\,\,\circlearrowright}|{\,\cong\,}
[rd] &\xiv\ar@{->}@<-1.2ex>[l]_{\ceo}
\ar@{->}[d]_{\delta\,\,}|{\,\cong\,}\\&X }$$

\end{teo}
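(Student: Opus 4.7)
The plan is to derive this theorem from Theorem \ref{equivmetrizable} by checking that requiring continuity on both sides does not destroy the equivalence. Both the Hilbert cube and every finite-dimensional cube are metrizable compactifications, so Theorem \ref{equivmetrizable} already gives, for the broader categories (with asymptotically continuous maps on the compactification side and coarse maps on the coarse side), that $\ceo$ and $\kh$ are mutually inverse equivalences and that $\delta,\nu$ are equivalences of categories. By Remark \ref{contasimptcont} and Proposition \ref{ldefcoarse2}, the functors $\ceo,\kh,\nu,\delta$ restrict to functors between the narrower categories of the present statement, keeping the diagram commutative and the functors faithful.

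The crux is to prove that $\delta$ remains full after this restriction. Given continuous $\phi:X\to X'$ between the Z-set coronas of packs $\xid$, $\xpid$, I need to produce a \emph{continuous} $\tf:\tx\to\txp$ with $\tf|_X=\phi$ and $\tf(\hx)\subset\hxp$. I would use the homotopy characterization of Z-sets in cubes recalled in Section \ref{introduccionzsets}: pick a homotopy $H':\txp\times[0,1]\to\txp$ with $H'_0=Id_{\txp}$ and $H'_t(\txp)\subset\hxp$ for every $t>0$. Since $\txp$ is an absolute retract (being either $Q$ or $[0,1]^n$), the map $\phi:X\to\txp$ extends to a continuous $\tf_0:\tx\to\txp$. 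Fixing a metric $\dd$ on $\tx$, define
\[
\tf(x)\;=\;H'_{\min(\dd(x,X),\,1)}\bigl(\tf_0(x)\bigr).
\]
Then $\tf$ is continuous; $\tf|_X=\phi$ because $\dd(\,\cdot\,,X)=0$ on $X$ and $H'_0=Id$; and $\tf(\hx)\subset\hxp$ because $\dd(x,X)>0$ for $x\in\hx$ while $H'_t(\txp)\subset\hxp$ for $t>0$. By Remark \ref{contasimptcont}, $\tf$ is asymptotically continuous, and $\delta(\tf)=\phi$, as required.

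Fullness of $\nu=\delta\circ\kh$ then follows from fullness of $\kh$ supplied by Corollary \ref{eqcat1}, while density is automatic on the essential image: every compact metric space embeds as a Z-set in $Q$, and every finite-dimensional one into some $[0,1]^n$ (as recorded in Section \ref{introduccionzsets}). Combined with faithfulness inherited from Theorem \ref{equivmetrizable}, this yields that $\delta$ and $\nu$ are equivalences of the restricted categories, and that $\ceo,\kh$ remain mutually inverse equivalences. I expect the only real obstacle to be the extension construction above; once the Z-set homotopy characterization and the AR property of cubes are invoked, everything else is bookkeeping over Theorem \ref{equivmetrizable}.
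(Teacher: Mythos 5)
Your proposal is correct and follows essentially the same route as the paper: reduce to Theorem \ref{equivmetrizable} (in its continuous-maps variant, cf.\ Remark \ref{functoracont}), inherit faithfulness and density, and prove that $\delta$ remains full by extending a continuous map between the coronas using the Z-set property together with the AR property of $Q$ and $[0,1]^n$. The only difference is that you carry out the extension explicitly via $\tf(x)=H'_{\min(\dd(x,X),1)}\bigl(\tf_0(x)\bigr)$, whereas the paper cites the ``T is full'' step of Theorem 2 of \cite{zs}; your formula is a correct instance of that argument.
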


\begin{proof}[Proof of Theorem \ref{equivmetrizablehc}]From Theorem \ref{equivmetrizable} and the categories
are defined, we deduce that $\ceo$ and $\kh$ are coarse
equivalences, the one inverse of the other and the diagram is
commutative.

That $\delta$ is a dense functor follows from last paragraph of
Section  \ref{introduccionzsets}. That it is faithful, from Theorem
\ref{equivmetrizable}.

Let us see that $\delta$ is full. Take $\xid$, $\xpid$ with
$\tx\in\{Q\}\cup\{[0,1]^n\}_{n=1}^\infty$ and such that $X$ and $X'$
are Z-sets of $\tx$ and $\txp$ respectively. Consider a continuous
map $f:X\rigtharrow X'$. In case $\tx=\txp=Q$, the section ``T is
full'' of Theorem 2 of \cite{zs}'s proof (pag. 5235, below), they
define a continuous extension of $f$ to $\tf:\hx\rightarrow\txp$
such that $\tf(\hx)\subset\hxp$, using the fact that $X$ is a Z-set
and that $Q$ is an AR and some properties described on
\ref{introduccionzsets}. But this argument is valid in the finite
dimensional cube case, so we have a continuous extension
$\tf:\hx\rightarrow\txp$ with $\tf(\hx)\subset\hxp$ and
$\tf(X)\subset X')$ in every case. By Remark \ref{contasimptcont},
$\tf$ is asymptotically continuous. Then, $\delta$ is full.

Therefore, $\delta$ is a equivalence of categories. Since
$\nu=\delta\circ\kh$, $\nu$ is a equivalence of
categories.\end{proof}

\begin{rmk} Theorem 8 of \cite{zs} (pag. 5238), can be easily proven
using Theorem \ref{equivmetrizablehc}.\end{rmk}

\section*{Acknowledgement}

The author thanks very specially his advisor, Manuel Alonso Morón.
The results of this article, comes from the Author's PhD Thesis
\cite{moreno2,moreno}, and are suggested and oriented by his advisor
from the beginning of this thesis until the publication of the
present article.

He also thanks José María Higes, Jerzy Dydak for his helpful
discusions, to John Roe and Alexander Dranishnikov for their
orientations in the author's stays at Penn State University and
University of Florida, respectively and to Department of Geometría y
Topología of Universidad Complutense de Madrid for his support.

\end{document}